\numberwithin{equation}{section}
\newtheorem{theorem}{Theorem}[section]
\newtheorem{lemma}[theorem]{Lemma}
\newtheorem{proposition}[theorem]{Proposition}
\newtheorem{cor}[theorem]{Corollary}
\newtheorem{rem}[theorem]{Remark}
\newcommand{\R}{\mathbb{R}}
\newcommand{\N}{\mathbb{N}}
\newcommand{\cP}{{\ensuremath{\mathcal P}} }
\newcommand{\cC}{{\ensuremath{\mathcal C}} }
\newcommand{\cL}{{\ensuremath{\mathcal L}} }
\newcommand{\cW}{{\ensuremath{\mathcal W}} }
\newcommand{\bP}{{\ensuremath{\mathbf P}} }
\newcommand{\bE}{{\ensuremath{\mathbf E}} }
\DeclareMathSymbol{\leqslant}{\mathalpha}{AMSa}{"36} 
\DeclareMathSymbol{\geqslant}{\mathalpha}{AMSa}{"3E} 
\DeclareMathSymbol{\eset}{\mathalpha}{AMSb}{"3F}     
\renewcommand{\leq}{\;\leqslant\;}                   
\renewcommand{\geq}{\;\geqslant\;}                   
\newcommand{\dd}{\,\text{\rm d}}             
\def\restriction#1#2{\mathchoice
	{\setbox1\hbox{${\displaystyle #1}_{\scriptstyle #2}$}
		\restrictionaux{#1}{#2}}
	{\setbox1\hbox{${\textstyle #1}_{\scriptstyle #2}$}
		\restrictionaux{#1}{#2}}
	{\setbox1\hbox{${\scriptstyle #1}_{\scriptscriptstyle #2}$}
		\restrictionaux{#1}{#2}}
	{\setbox1\hbox{${\scriptscriptstyle #1}_{\scriptscriptstyle #2}$}
		\restrictionaux{#1}{#2}}}
\def\restrictionaux#1#2{{#1\,\smash{\vrule height 1\ht1 depth .85\dp1}}_{\,#2}}
\newcommand{\bbE}{{\ensuremath{\mathbb E}} }
\newcommand{\bbP}{{\ensuremath{\mathbb P}} }
\newcommand{\bbR}{{\ensuremath{\mathbb R}} }
\newcommand{\bbT}{{\ensuremath{\mathbb T}} }
\newcommand{\bbZ}{{\ensuremath{\mathbb Z}} }
\newcommand{\norm}[1]{\left\lVert#1\right\rVert}
\newcommand{\gnorm}[1]{\left\lVert#1\right\rVert_{\infty \to 1}}
\def\captionfont@{\footnotesize}
\def\captionheadfont@{\scshape}
\long\def\@makecaption#1#2{%
  \vspace{2mm}
  \setbox\@tempboxa\vbox{\color@setgroup
    \advance\hsize-6pc\noindent
    \captionfont@\captionheadfont@#1\@xp\@ifnotempty\@xp
        {\@cdr#2\@nil}{.\captionfont@\upshape\enspace#2}%
    \unskip\kern-6pc\par
    \global\setbox\@ne\lastbox\color@endgroup}%
  \ifhbox\@ne 
    \setbox\@ne\hbox{\unhbox\@ne\unskip\unskip\unpenalty\unkern}%
  \fi
  \ifdim\wd\@tempboxa=\z@ 
    \setbox\@ne\hbox to\columnwidth{\hss\kern-6pc\box\@ne\hss}%
  \else 
    \setbox\@ne\vbox{\unvbox\@tempboxa\parskip\z@skip
        \noindent\unhbox\@ne\advance\hsize-6pc\par}%
\fi
  \ifnum\@tempcnta<64 
    \addvspace\abovecaptionskip
    \moveright 3pc\box\@ne
  \else 
    \moveright 3pc\box\@ne
    \nobreak
    \vskip\belowcaptionskip
  \fi
\relax
}
\def\writefig#1 #2 #3 {\rlap{\kern #1 truecm
\raise #2 truecm \hbox{#3}}}
\title[Long time dynamics for interacting  oscillators on graphs]{Long time dynamics for interacting  oscillators \\ on graphs}
\author{Fabio Coppini}
\address{
  Universit\'e Paris Diderot, Sorbonne Paris Cit\'e,   Laboratoire de Probabilit{\'e}s Statistique et Mod\'elisation, UMR 8001,
            F- 75205 Paris,France
}
\begin{document}
	\maketitle
	
\begin{abstract}
		The stochastic Kuramoto model defined on a sequence of graphs is analyzed: the emphasis is posed on the relationship between the mean field limit, the connectivity of the underlying graph and the long time behavior. We give an explicit deterministic condition on the sequence of graphs such that, for any finite time and any initial condition, even dependent on the network, the empirical measure of the system stays close to the solution of the McKean-Vlasov equation associated to the classical mean field limit. Under this condition, we study the long time behavior in the subcritical and in the supercritical regime: in both regimes, the empirical measure stays close to the (possibly degenerate) manifold of stable stationary solutions, up to times which can diverge as fast as the exponential of the size of the system, before Large Deviation phenomena take over. The condition on the sequence of graphs is derived by means of Grothendieck's Inequality and expressed through a concentration in $\ell_{\infty}\to \ell_1$ norm. It is shown to be satisfied by a large class of graphs, random and deterministic, provided that the average number of neighbors per site diverges, as the size of the system tends to infinity.
		 \\
		 \\
		 \textit{2010 Mathematics Subject Classification:} 60K35, 82C20, 82C31, 82C44.
		 \\
		 \\
		 \textit{Keywords and phrases:} Interacting oscillators, Long time dynamics, Kuramoto model, Random Graphs, Stochastic partial differential equations, Cut-norm, Grothendieck's Inequality, Self-normalized processes
\end{abstract}

\section{Introduction}
\subsection{Synchronization of mean field systems on graphs}
In recent years, synchronization of complex networks has become a very important topic for explaining real world phenomena. While in the physics literature the analysis has been pushed quite far and several extended reviews are available (e.g. \cite{cf:DB14,cf:RPJK16}), from a mathematical point of view these studies and the associated numerical simulations, can be regarded more as heuristic arguments than conclusive proofs.

The mathematical community has started working on particle systems on (random) graphs from the statistical mechanics point of view in the equilibrium regime and, with respect to the graph setting, assuming a locally tree-like structure (e.g \cite{cf:DM10}). Only in the last few years the attention has been focused on the dynamics of weakly interacting particles, tackling mean field systems on graphs, and their relationship with the corresponding thermodynamical limit (e.g. \cite{cf:BBW, cf:DGL}). These results, and the one presented here, are obtained for graphs in an intermediate regime between the sparse and the dense case, i.e. if $G_n$ has $n$ vertices and $np_n$ represents the average number of edges, then $1 \ll n p_n \leq n$. In the case of sparse graphs, i.e. $np_n = O(1)$, the limiting system seems to show a different phenomenology (\cite{cf:LRW,cf:ORS}).

Today, many results on the behavior of the empirical measure of such systems are available (\cite{cf:BBW,cf:CDG,cf:DGL,cf:L18,cf:RO}), but there is no agreement on the \emph{weakest} hypothesis the class of graphs should satisfy in order to obtain the classical mean field limit. It turns out that, depending on the setting one is considering, i.e. the normalization chosen in the interaction and/or the hypothesis on the initial data, different requirements on the graph may be asked.

To the author's knowledge, there exists no result on the longtime dynamics of a system defined on a sequence of graphs and the question whether the network is influencing the dynamics on long time scales, is still open and very much awaited with regards to applications.

\medskip

In this work, we attack these issues by considering a well known model of synchronization defined on a sequence of graphs: we consider the Kuramoto model (e.g. \cite{cf:AKur}) for which an extensive literature is available and many tools have now been developed (\cite{cf:BGP10,cf:BGP14,cf:GPP12}). For the sake of clarity, we study the model without the natural frequencies but our techniques apply as well in the quenched setting. We look for a result of mean field type with the minimal hypothesis on the initial conditions, i.e. the weak convergence of the empirical measure only, and by proposing a (deterministic) condition on the sequence of graphs which is shown to be satisfied by a large class of \emph{homogeneous} graphs, including Erd\H{o}s-Rényi random graphs with diverging average degree.

Finally, we show that the condition on the graph is not only sufficient for the system to converge to the mean field limit on bounded time intervals, but also that it is enough to study it on longer time scales. Namely, we push our analysis to the Large Deviation barrier of exponential time scales showing that, if the system synchronizes, then it keeps synchronized for long times.

\subsection{The model}
For each $n\in \N$, let $\xi^{(n)}$ be the adjacency matrix of a graph $(V^{(n)}, E^{(n)})$ with $n$ vertices:
\begin{equation}
	\label{d:graphSeq}
	V^{(n)}= \left\{1, \dots, n\right\}, \quad E^{(n)} = \left\{ (i,j) \in V^{(n)}\times V^{(n)} : \xi^{(n)}_{ij} \geq 1 \right\}.
\end{equation}
We consider both directed and undirected graphs as well as multigraphs so that $\xi^{(n)}_{ij}$ can take values in $\{1,\dots,n\}$ and not need to be equal to $\xi^{(n)}_{ji}$. We denote the corresponding (multi)graph by $\xi^{(n)}$ itself. Together with $\xi^{(n)}$, we consider a \emph{dilution} parameter $p_n \in (0,1]$ representing the average density of neighbors per site. The two quantities will be coupled so that it is useful to think of them as one single object, we refer to Subsection \ref{ss:graph} for the precise condition we required on it.

\medskip

Given  $\left( \xi^{(n)}, p_n \right)$, let $\{\theta^{i,n}_\cdot \}_{i=1,\dots,n}$ be the family of oscillators on $\bbT^n := \left(\bbR/2\pi \bbZ\right)^n$, which satisfy:
\begin{equation}
\label{eq:k}
\begin{cases}
	\dd{\theta}_{t}^{i,n} =\,  \frac{1}{n p_n} \sum_{j=1}^{n} \, \xi^{(n)}_{ij} \, J (\theta_{t}^{i,n} - \theta_{t}^{j,n}) \dd t + \dd B_{t}^{i}, \quad \text{for }t>0,\\
	\; \theta_0^{i,n} =\, \theta_0^i, \quad \text{ for }i \in  \{1,\dots,n\},
\end{cases}
\end{equation}
where $J(\cdot) = -K \sin (\cdot)$ with $K \geq 0$. Denote by $\bP$ the law induced by $\left\{B_\cdot^i\right\}_{i\in\N}$ which are independent and identically distributed (IID) Brownian motions on $\bbT$ and by $\left\{\theta_0^i \right\}_{i\in\N}$ the initial conditions. We consider both deterministic and random initial data and, whenever they are random, they have to be independent of the Brownian motions.

If $\{\xi^{(n)}_{ij}\}_{i,j}$ are symmetric, i.e. $\xi^{(n)}_{ij} = \xi^{(n)}_{ji}$ for $1 \leq i < j \leq n$, then the model is reversible (e.g. \cite{cf:BGP10}) with respect to the probability measure on $\bbT^n$ given by
\begin{equation}
	\pi^{(n)} (\dd \theta) = \frac 1{Z^{(n)}} \exp \left( - \frac K n \sum_{i,j=1}^n \xi^{(n)}_{ij} \cos(\theta^i - \theta^j) \right) \lambda_n (\dd \theta),
\end{equation}
where $Z^{(n)}$ is the normalizing constant and $\lambda_n$ the uniform probability measure on $\bbT^n$.

\medskip

The main quantity of interest in system \eqref{eq:k} is the empirical measure $\mu^n_t$ associated to $\{\theta^{i,n}_t\}_{i=1,\dots,n}$ and it is defined for all $t \geq 0$ by
\begin{equation}
\label{def:emp}
\mu^n_t := \frac 1n \sum_{j=1}^{n} \delta_{\theta^{j,n}_t} \in \cP(\bbT),
\end{equation}
the space of probability measure on the torus being denoted by $\cP(\bbT)$.

\medskip

\subsection{The reversible Kuramoto model and its mean field limit}
When $\xi^{(n)}_{ij} =1$ for $1\leq i,j \leq n$ and $p_n \equiv 1$ for all $n\in\N$, i.e. $\xi^{(n)}$ is the complete graph, system \eqref{eq:k} becomes:
\begin{equation}
	\label{eq:kc}
	\begin{cases}
		\dd{\bar{\theta}}_{t}^{i,n} =\,  (J *\bar{\mu}^n_t) (\bar{\theta}_{t}^{i,n}) \dd t + \dd B_{t}^{i}, \quad \text{for }t>0,\\
	\; \bar{\theta}_0^{i,n} =\, \theta_0^i, \quad \text{ for }i \in  \{1,\dots,n\},
	\end{cases}
\end{equation}
where $\bar{\mu}^n_t:=\frac 1n \sum_{j=1}^n \delta_{\bar{\theta}^{j,n}_n}$ is the associated empirical measure and $*$ stands for the convolution. We refer to \eqref{eq:kc} as the reversible Kuramoto model (e.g. \cite{cf:BGP10}).

\medskip

It is well known (e.g.  \cite[Proposition 3.1]{cf:BGP10}) that for all fixed time $T$, $\bar{\mu}^n_{t \in [0,T]}$ seen as a continuous function over $\cP(\bbT)$, weakly converges in $\cC^0([0,T], \cP(\bbT))$ to a deterministic limit $\mu_\cdot \in \cC^0([0,T], \cP(\bbT))$ that is solution to the following partial differential equation (PDE):
\begin{equation}
\label{eq:PDE}
\begin{cases}
\partial_t \mu_t (\theta) =  \tfrac 12 \partial^2_\theta \mu_t(\theta) - \partial_\theta [\mu_t(\theta) (J*\mu_t)(\theta)], \quad \text{for } \theta \in \bbT , \; 0< t \leq T,\\
\restriction{\mu_t}{t=0} = \mu_0,
\end{cases}
\end{equation}
provided that $\mu^n_0$ weakly converges to $\mu_0$ in $\cP(\bbT)$. If $\mu_0$ does not have a density, than \eqref{eq:PDE} has to be intended in the weak sense; however the regularity properties of the Laplacian operator make $\mu_t$ smooth for all $t>0$ (see again \cite[Proposition 3.1]{cf:BGP10}). Equation \eqref{eq:PDE} is often called McKean-Vlasov or Fokker-Planck equation and we refer to its solution $\mu_\cdot$ as to the mean field limit of the diffusions solving \eqref{eq:kc}.

\medskip

We recall here the most important results on \eqref{eq:PDE}, without giving any proof but referring to \cite{cf:GPP12} (and references therein) where a complete analysis of the global dynamics is presented.

As for the mean field limit of the classical Kuramoto model, \eqref{eq:PDE} is known to admit a phase transition depending on the coupling strength $K$: in the subcritical regime, for $0\leq K<K_c:=1$, the particles behave as they were independently distributed on the circle; in the supercritical regime, for $K>1$,  they tend to synchronize around the same phase. We do not consider the critical case $K=1$, since it does not add anything to the purpose of this work.

More precisely, in the subcritical regime there is a unique stationary solution which corresponds to the incoherent state $\frac 1{2\pi}$, the uniform measure on the torus  (see \cite[Proposition 4.1]{cf:GPP12}). It is globally attractive and the linear operator around it has negative spectrum bounded away from zero: we will make use of this property showing that the fluctuations given by the graph structure are controlled for all times, whereas the random fluctuations given by the Brownian motions are not and will make the system escape from $1/2\pi$ after some (very long) time, i.e. a Large Deviation phenomenon.

In the supercritical regime, when $K>1$, there is a manifold of stable stationary solutions corresponding to the synchronous states of the oscillators $\{\theta^{i,n}_\cdot\}_{i=1,\dots,n}$ (see \cite[Subsection 4.3]{cf:GPP12} and \cite{cf:BGP10}). Up to a rotation, all stable stationary solutions of \eqref{eq:PDE} are given by
\begin{equation}
\label{d:solM}
	q(\theta) = \frac{\exp\{2Kr \cos (\theta)\}}Z,
\end{equation}
where $Z$ is the normalizing constant and $r=r(K)$ is the unique solution in $(0,1)$ of a fixed point equation $r = \Psi(2Kr)$, see \cite{cf:BGP10} for a explicit formula of $\Psi$. The parameter $r$ is often referred to as the degree of synchronization of the system: $r$ close to 0 indicates that the particles are scattered around the circle, $r$ close to 1 that they are almost fully synchronized. We just recall that whenever $K<1$, the fixed point equation has a unique solution $r=0$, which in \eqref{d:solM} boils down to the uniform measure $1/2\pi$, and whenever $K>1$ the value $r=0$ is still a solution but the corresponding measures solving \eqref{eq:PDE} are unstable so that we will not consider them.

\medskip

Let $K>1$ and $0 < r < 1$. Observe that system \eqref{eq:kc} (and also \eqref{eq:k}) is invariant under rotations, this property is maintained in the limit \eqref{eq:PDE} and the manifold of stationary solutions $M$ can be described as
\begin{equation}
	M = \left\{ q_\psi \, : \, q_\psi (\cdot) = q (\cdot - \psi), \, \psi \in \bbT \right\}.
\end{equation}
It is possible to show that, unless one starts from the unstable manifold
\begin{equation}
	U=\left\{ \mu \in \cP (\bbT) \, : \, \int_{\bbT} \exp (i \theta) \mu (\dd \theta) = 0  \right\},
\end{equation}
the measure $\mu_t$ solution to \eqref{eq:PDE} converges to some $q_\psi \in M$ as $t$ tends to infinity, the phase $\psi\in\bbT$ depending only on $\mu_0$. Since each $q\in M$ is a stationary solution, the dynamics of $\mu_t$ is fully characterized for all times $t$.

\medskip

\subsection{The graph's perspective} \label{ss:graph}
The aim of this work is to investigate the weakest assumptions on the sequences $\xi = \left\{\xi^{(n)} \right\}_{n \in \N}$ and $\{p_n\}_{n\in \N}$, such that the long time behavior of \eqref{eq:k} is well understood: in other words, whenever system \eqref{eq:k} is comparable to \eqref{eq:kc} or to the mean field limit \eqref{eq:PDE}, under a proper scale between size of the system $n$ and some horizon time $T_n$.

The normalization sequence $p_n$ has to be chosen such that the interaction term in \eqref{eq:k} makes sense. At least, this requires the assumption that the quantity
\begin{equation}
	\label{h:gnorm}
	\frac{1}{np_n} \sum_{j=1}^n \xi^{(n)}_{ij}
\end{equation}
is of order one, for almost each vertex $i$ in the graph.

\begin{rem}
	Observe that whenever \eqref{h:gnorm} converges to zero or diverges, one should look for a different normalization in order to obtain a proper limit. A control on \eqref{h:gnorm} is thus required to exclude degenerate cases, yet it cannot be sufficient for our purpose: whenever one considers a graph composed of two (or more) highly connected components, the degree of each vertex can be correctly defined, but one cannot expect the convergence of the empirical measure since the behavior on each component may differ, depending on the initial conditions! We refer to  \cite[Remark 1.2]{cf:CDG} and \cite[Remark 1.4]{cf:DGL} for concrete examples and a precise analysis from this perspective, see also Remark \ref{r:conCom} in the next section.
\end{rem}

\medskip

For $n \in \N$, define the normalized adjacency matrix $P^{(n)} = \{P^{(n)}_{ij} \}_{i,j=1,\dots,n}$ by
\begin{equation}
	P^{(n)}_{ij} := \, \frac{\xi^{(n)}_{ij}}{p_n}, \quad \text{  for } i,j=1,\dots,n.
\end{equation} 
Recall that we do not assume any symmetry on $\xi^{(n)}$ and that it can also represent a multigraph. Define $\mathbf{1}^{(n)}$ as the adjacency matrix associated to the classical mean field model, i.e. $\mathbf{1}^{(n)}_{ij} = 1$ for $i,j=1,\dots,n$. One would like to compare $P^{(n)}$ to $\mathbf{1}^{(n)}$.

\medskip

It turns out that a sufficient condition for what we aim at, is given by a control on the difference between $P^{(n)}$ and $\mathbf{1}^{(n)}$ through the $\ell_\infty \to \ell_1$ norm. This norm is defined for a matrix $G = \{G_{ij}\}_{i,j=1,\dots,n}$ as
\begin{equation}
	\gnorm{G} := \sup_{\norm{s}_\infty \leq 1} \norm{G s}_1 = \sup_{s,t \in \{-1,1\}^n} G s t^\top = \sup_{s_i,t_j \in \{-1,1\}} \, \sum_{i,j=1}^n G_{ij} s_i t_j.
\end{equation}
It has received a lot of attention in the last years: it appears in many applications in computer science (e.g. \cite{cf:HSS}) and it has been shown to be very useful in graphs concentration (e.g. \cite{cf:GV, cf:LLV, cf:O09}). Part of this success is because of the equivalence to the cut-norm (e.g. \cite{cf:AN06}) and, as already remarked in \cite{cf:GV,cf:RO}, of Grothendieck's Inequality, which is recalled hereafter.

\medskip

\begin{theorem}[Grothendieck's inequality, {\cite[Theorem 2.4]{cf:gro}}]
	\label{thm:gro} 
	Let $\left\{ a_{ij} \right\}_{i,j=1,\dots,n}$ be a $n\times n$ real matrix such that for all $s_i,t_j \in \{-1,1\}$
	\begin{equation}
	\sum_{i,j=1}^n a_{ij} s_i t_j \leq 1.
	\end{equation}
	Then, there exists a constant $K_R >0$, such that for every Hilbert space $(H, \langle \cdot, \cdot \rangle_H )$ and for all $S_i$ and $T_j$ in the unit ball of $H$
	\begin{equation}
	\label{eq:gro}
	\sum_{i,j=1}^n a_{ij} \langle S_i, T_j \rangle_H \leq K_R.
	\end{equation}
\end{theorem}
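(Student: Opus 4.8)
The plan is to follow Krivine's rounding argument, which proves \eqref{eq:gro} with the explicit constant $K_R\le\pi/\big(2\ln(1+\sqrt2)\big)$. First I would reduce to a workable situation. Only the $2n$ vectors $S_1,\dots,S_n,T_1,\dots,T_n$ enter \eqref{eq:gro}, so I may replace $H$ by their linear span and assume $H=\R^N$; and since $\sum_{i,j}a_{ij}\langle S_i,T_j\rangle_H$ is affine in each argument separately, its supremum over the closed unit ball is attained when every $S_i,T_j$ has norm one, so I may assume $\norm{S_i}_H=\norm{T_j}_H=1$. The argument then rests on two facts. The first is the Grothendieck identity: if $g$ is a standard Gaussian vector in $\R^N$ and $u,v\in\R^N$ are unit vectors, then
\[
\mathbb{E}\big[\sign\langle g,u\rangle\,\sign\langle g,v\rangle\big]=\frac2\pi\arcsin\langle u,v\rangle,
\]
which one gets by computing, inside the plane $\mathrm{span}(u,v)$, the angular measure of the set of directions on which $\langle\cdot,u\rangle$ and $\langle\cdot,v\rangle$ disagree in sign. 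The second is a transformation converting inner products into sines of a fixed multiple of inner products.

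For the transformation, I would fix $c:=\ln(1+\sqrt2)$, the unique positive root of $\sinh c=1$, set $\widehat H:=\bigoplus_{k\ge0}H^{\otimes(2k+1)}$, and for a unit vector $u\in H$ define
\[
\Phi(u):=\bigoplus_{k\ge0}(-1)^k\sqrt{\tfrac{c^{2k+1}}{(2k+1)!}}\;u^{\otimes(2k+1)},\qquad
\Psi(u):=\bigoplus_{k\ge0}\sqrt{\tfrac{c^{2k+1}}{(2k+1)!}}\;u^{\otimes(2k+1)}.
\]
Since $\langle u^{\otimes m},v^{\otimes m}\rangle=\langle u,v\rangle^m$, one has $\norm{\Phi(u)}_{\widehat H}^2=\norm{\Psi(u)}_{\widehat H}^2=\sum_{k\ge0}\tfrac{c^{2k+1}}{(2k+1)!}=\sinh c=1$, so $\Phi(u)$ and $\Psi(u)$ are unit vectors, and for unit $u,v$
\[
\langle\Phi(u),\Psi(v)\rangle_{\widehat H}=\sum_{k\ge0}(-1)^k\tfrac{c^{2k+1}}{(2k+1)!}\,\langle u,v\rangle^{2k+1}=\sin\!\big(c\,\langle u,v\rangle\big).
\]

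Finally I would combine the two facts. Put $\widehat S_i:=\Phi(S_i)$, $\widehat T_j:=\Psi(T_j)$, restrict to the finite-dimensional subspace of $\widehat H$ they span, and let $g$ be a standard Gaussian there. Setting $s_i:=\sign\langle g,\widehat S_i\rangle\in\{-1,1\}$ and $t_j:=\sign\langle g,\widehat T_j\rangle\in\{-1,1\}$, the Grothendieck identity gives
\[
\mathbb{E}[s_it_j]=\frac2\pi\arcsin\langle\widehat S_i,\widehat T_j\rangle=\frac2\pi\arcsin\sin\!\big(c\,\langle S_i,T_j\rangle\big)=\frac{2c}{\pi}\,\langle S_i,T_j\rangle,
\]
the last equality being valid because $|c\,\langle S_i,T_j\rangle|\le c<\pi/2$. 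Hence
\[
\sum_{i,j=1}^n a_{ij}\langle S_i,T_j\rangle_H=\frac{\pi}{2c}\,\mathbb{E}\Big[\textstyle\sum_{i,j=1}^n a_{ij}s_it_j\Big]\le\frac{\pi}{2c},
\]
the pointwise bound on the random bilinear form coming from the hypothesis on $a$, since $s_i,t_j\in\{-1,1\}$; this yields $K_R\le\pi/\big(2\ln(1+\sqrt2)\big)$.

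The main obstacle is the construction of $\Phi,\Psi$: one must see that the alternating Taylor coefficients of $\sin$ can be ``square-rooted'' into a genuine inner-product identity by putting the alternating sign entirely on one of the two maps, and one must recognize $\sinh c=1$ as exactly the normalization forced by demanding that $\Phi(u),\Psi(v)$ be unit vectors — this is where the value of the constant is produced, and one should also check the small point that $c<\pi/2$ so that $\arcsin\circ\sin$ acts as the identity above. The remaining ingredients are routine: the Grothendieck identity is an elementary planar computation; the two reductions at the start are standard; the direct sum $\widehat H$ converges because $\sinh c<\infty$; $\sign\langle g,\widehat S_i\rangle$ is defined almost surely since $\langle g,\widehat S_i\rangle\ne0$ a.s.; and interchanging $\sum$ and $\mathbb{E}$ is harmless as the sum is finite.
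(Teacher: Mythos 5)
Your proof is correct: it is Krivine's rounding argument, and each step checks out — the reduction to finite dimension and unit vectors, the Gaussian sign identity $\bE[\sign\langle g,u\rangle\,\sign\langle g,v\rangle]=\tfrac2\pi\arcsin\langle u,v\rangle$, the tensor-power construction with $\sinh c=1$ giving $\langle\Phi(u),\Psi(v)\rangle=\sin(c\langle u,v\rangle)$, and the observation $c=\ln(1+\sqrt2)<\pi/2$ that lets $\arcsin\circ\sin$ collapse to the identity. Note, however, that the paper does not prove this statement at all: it quotes Grothendieck's inequality from Pisier's survey (Theorem 2.4 there) as a known black-box ingredient, its only role being to control the graph fluctuation terms $g^n_t$ in Lemmas \ref{lem:g^n_t} and \ref{lem:g^n_tPi}. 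So there is no in-paper argument to compare against; what your write-up adds is a self-contained proof which moreover produces the explicit constant $K_R\leq\pi/\big(2\ln(1+\sqrt2)\big)$, whereas the paper only needs the existence of some finite universal constant (also observe that your one-sided use of the hypothesis, bounding $\bE\big[\sum_{i,j}a_{ij}s_it_j\big]\leq 1$ pointwise, is exactly what the stated one-sided assumption provides, so no absolute values are needed).
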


It is indeed thanks to this inequality that $\ell_\infty \to \ell_1$ norm turns out to be the natural choice for our setting: an important part of the proof (Lemmas \ref{lem:g^n_t} and \ref{lem:g^n_tPi}) consists in showing that the fluctuations due to the graph structure can be described by expressions like \eqref{eq:gro}, and thus controlled by $\gnorm{\cdot}$. 

\medskip

From now on, the only condition we require on $\left(\xi^{(n)}, p_n \right)_{n \in \N}$ is to satisfy:
\begin{equation}
	\gnorm{P^{(n)} - \mathbf{1}^{(n)}} = o(n^2),
\end{equation}
or, in other words,
\begin{equation}
	\label{h:graph}
	\lim_{n\to\infty} \; \sup_{s_i,t_j \in \{-1,1\}} \, \frac 1 {n^2} \sum_{i,j=1}^n \left(\frac{\xi_{ij}^{(n)}}{p_n} - 1 \right)s_i t_j  = 0.
\end{equation}

\medskip

In Proposition \ref{lem:ber} it is shown that Erd\H{o}s-Rényi random graphs with parameter $p_n$ satisfy condition \eqref{h:graph} almost surely, provided that $np_n \uparrow \infty$. We also provide a class of deterministic graphs, Ramanujan graphs, that satisfies \eqref{h:graph} (see Proposition \ref{p:ram}) and give some link with the theory of graphons. 

Appendix A presents such results and includes remarks on the relationship between condition \eqref{h:graph}, the degree condition \eqref{h:gnorm} and the connectivity of $\{\xi^{(n)}\}_{n\in\N}$.

\subsection{Set-up and notations}
The closeness between $\mu^n_t$ and $\mu_t$ is studied through a norm which controls the bounded Lipschitz (or 1-Wasserstein) distance between probability measures, in an appropriate class of weighted Hilbert spaces $H_{-1,w}$. This class is defined as follows.

Denote by $\cC_0^1(\bbT)$ the space of $\cC^1$ functions on the torus with zero mean and consider
\begin{equation}
	\cL_0^2 =  \left\{ f \in \cL^2 (\bbT) : \int_{\bbT} f = 0 \right\},
\end{equation}
with canonical scalar product $(u,v) := \int_\bbT uv$, for $u,v \in \cL^2_0$. Let $w \in \cC^1(\bbT,(0,\infty))$ and $V$ be the closure of $\cC_0^1(\bbT)$ with respect to the norm $\norm{\varphi}_{H_{1, 1/w}} = \sqrt{\int_{\bbT} \tfrac{(\varphi')^2}{w}}$ for $\varphi \in \cC^1_0 (\bbT)$. It is easy to see that $V$ is continuously and densely injected in $\cL^2_0$ (thanks to the compactness of $\bbT$ and Poincaré inequality). Moreover, one can define an inner product on $V$ which makes it an Hilbert space $H_{1,1/w} := (V, \langle \cdot, \cdot \rangle_{H_{1, 1/w}} )$ where $  \langle \varphi , \psi \rangle_{H_{1, 1/w}} = \int_{\bbT}\frac{ \varphi' \psi'}{w}$ for all $\varphi, \psi \in \cC^1_0 (\bbT)$. The dual space of $H_{1,1/w}$ is denoted by $H_{-1,w}$. Observe that if $u,v \in \cL^2_0$ and $v \in H_{1,1/w}$, then $u \in H_{-1,w}$ and
\begin{equation}
	u(v):= \langle u, v \rangle_{-1,1} = (u,v),
\end{equation}
where $\langle \cdot, \cdot \rangle_{-1,1}$ denotes the action of $H_{-1,w}$ on $H_{1, 1/w}$, we omit the weight $w$.

\medskip

The action of a probability measure $\mu$ on a test function $h$ is denoted by $\langle \mu, h \rangle = \int h \dd \mu $: of course whenever $u$ and $v$ are regular enough, one has $u(v)=\langle u, v \rangle_{-1,1} = \langle u, v \rangle = (u,v)$, where we have abused of notation, denoting the density of a probability measure by the probability measure itself. 

Finally, observe that different weights $w$ give equivalent norms so that whenever the geometry of the space is not important, we consider the case $w\equiv1$ and simply note $\norm{\cdot}_{-1}$.  More information about the construction of $H_{-1, \omega}$ are given in Appendix B.

Hereafter we drop the dependency on $\bbT$, i.e. we write $\cC^1_0$ instead of $\cC^1_0 (\bbT)$ and so on for the other spaces and integrals.

\section{Main results}
We present the results in three consecutive subsections: we start by the finite time behavior, then pass to the supercritical regime and, finally, the subcritical case.

In all results, the convergence of empirical measures is stated in the norm $\norm{\cdot}_{-1}$. It is not difficult to see that the difference of two probability measures belongs to $H_{-1}$ and that the distance induced on $\cP(\bbT)$ controls the bounded Lipschitz distance (or, equivalently, the 1-Wasserstein distance). These details are covered in Appendix B.

\medskip

Recall that throughout the paper, we only require  $\left(\xi^{(n)}, p_n \right)_{n \in \N}$ to satisfy condition \eqref{h:graph} and $\mu_0 \in \cP(\bbT)$, no independence between $\mu^n_0$ and $\xi^{(n)}$ is demanded.

\bigskip

\subsection{The finite time behavior}
We give the result and then comment it.

\medskip

\begin{theorem}
	\label{thm:finTim}
	Let $K\geq0$. Suppose that for  all $\varepsilon_0 > 0$
	\begin{equation}
	\label{h:mu0}
	\lim_{n\to \infty} \bP \left( \norm{\mu_0^n - \mu_0}_{-1} \leq 	\varepsilon_0 \right) = 1.
	\end{equation}
	Then, for every fixed time $T> 0$ and for every $\varepsilon> 0$
	\begin{equation}
	\lim_{n\to \infty} \bP \left( \sup_{t\in[0, T]} \norm{\mu_t^n - \mu_t}_{-1} \leq \varepsilon \right) = 1.
	\end{equation}
\end{theorem}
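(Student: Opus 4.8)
The plan is to compare the empirical measure $\mu^n_\cdot$ of the graph system \eqref{eq:k} with the mean field limit $\mu_\cdot$ solving \eqref{eq:PDE} by inserting, as an intermediary, the empirical measure $\bar\mu^n_\cdot$ of the classical mean field system \eqref{eq:kc} \emph{driven by the same Brownian motions and the same initial conditions}. By the triangle inequality,
\begin{equation*}
\sup_{t\in[0,T]}\norm{\mu^n_t-\mu_t}_{-1}\leq \sup_{t\in[0,T]}\norm{\mu^n_t-\bar\mu^n_t}_{-1}+\sup_{t\in[0,T]}\norm{\bar\mu^n_t-\mu_t}_{-1},
\end{equation*}
so it suffices to show each term goes to $0$ in probability. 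The second term is handled by the classical propagation-of-chaos / mean field convergence result cited from \cite[Proposition 3.1]{cf:BGP10}, together with the hypothesis \eqref{h:mu0} that $\mu^n_0\to\mu_0$ in $H_{-1}$ in probability: one has to be slightly careful that the cited convergence is a weak convergence of the law of $\bar\mu^n_\cdot$, while here the initial datum may depend on the graph, but since $\bar\mu^n_\cdot$ depends on $\mu^n_0$ only through its value (not through the graph), and the map from initial empirical measure to path is continuous in the relevant topology, this reduces to the standard statement. The real work is the first term.

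To control $\sup_{t\in[0,T]}\norm{\mu^n_t-\bar\mu^n_t}_{-1}$ I would write an evolution equation for the signed measure $\nu^n_t:=\mu^n_t-\bar\mu^n_t$ tested against $H_{1,1/w}$ functions. Applying Itô's formula to $\langle \mu^n_t, h\rangle$ and $\langle \bar\mu^n_t, h\rangle$ for $h\in\cC^1_0$, the two martingale parts \emph{do not cancel} because the particles $\theta^{i,n}$ and $\bar\theta^{i,n}$ are different, but they are driven by the same $B^i$; the difference of the stochastic integrals is of the form $\sum_i \int_0^t (h'(\theta^{i,n}_s)-h'(\bar\theta^{i,n}_s))\dd B^i_s$, which one estimates via Burkholder--Davis--Gundy and then bounds by $\int_0^t \norm{\nu^n_s}_{-1}^2\dd s$ up to constants depending on $h$ — this is where the weighted $H_{-1}$ structure pays off, since $\norm{\cdot}_{-1}$ dominates the bounded-Lipschitz distance. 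The drift difference splits into (a) the genuinely different interaction kernels $\frac1{np_n}\sum_j\xi^{(n)}_{ij}J(\theta^{i,n}-\theta^{j,n})$ versus $(J*\bar\mu^n)(\bar\theta^{i,n})$, and (b) a term coming from replacing $P^{(n)}$ by $\mathbf 1^{(n)}$. Term (b) is precisely the graph-fluctuation term: written out, it is a bilinear form $\sum_{i,j}(P^{(n)}_{ij}-1)\,F(\theta^{i,n},\theta^{j,n})$ with $F$ built from $J$ and $h$, and via Grothendieck's inequality (Theorem \ref{thm:gro}) — decomposing $F(\theta^i,\theta^j)=\langle S_i,T_j\rangle_H$ in a suitable Hilbert space, using that $J=-K\sin$ and $h'$ have bounded Fourier-type decompositions — it is bounded by $K_R\,\gnorm{P^{(n)}-\mathbf 1^{(n)}}\cdot(\text{const})$, which is $o(n^2)$ by \eqref{h:graph}; after the $1/n$ normalization in the empirical measure and one more $1/n$ from $\frac1{np_n}$, this contributes $o(1)$. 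Term (a), the ``same kernel'' part, is Lipschitz in the measure argument in $H_{-1}$ and yields $\int_0^t \norm{\nu^n_s}_{-1}\dd s$.

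Collecting everything, one obtains a stochastic Gronwall-type inequality of the shape
\begin{equation*}
\bE\Big[\sup_{s\in[0,t]}\norm{\nu^n_s}_{-1}^2\Big]\leq C\,\bE\big[\norm{\nu^n_0}_{-1}^2\big]+C\,\eta_n+C\int_0^t \bE\Big[\sup_{r\in[0,s]}\norm{\nu^n_r}_{-1}^2\Big]\dd s,
\end{equation*}
where $\nu^n_0=\mu^n_0-\bar\mu^n_0=0$ (same initial data!) so the first term vanishes, and $\eta_n\to 0$ encodes the Grothendieck bound from \eqref{h:graph}. Gronwall's lemma then gives $\bE[\sup_{t\in[0,T]}\norm{\nu^n_t}_{-1}^2]\leq C_T\eta_n\to0$, hence convergence in probability, and combined with the second term we conclude. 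The main obstacle, and the place requiring the most care, is item (b): making the Hilbert-space factorization $F(\theta^i,\theta^j)=\langle S_i,T_j\rangle_H$ explicit with uniformly bounded vectors so that Grothendieck's inequality applies with a clean $\gnorm{\cdot}$ bound, and checking that all $h$-dependence is controlled uniformly enough to pass from fixed test functions to the $H_{-1}$ norm (a duality/separability argument, or working directly with the resolvent representation of $\norm{\cdot}_{-1}$). A secondary technical point is justifying the Itô computation and the BDG step rigorously in the infinite-dimensional $H_{-1,w}$ setting, which is presumably why the weighted spaces $H_{-1,w}$ were introduced; I would handle this by testing against a countable dense family and using the self-normalized/martingale estimates alluded to in the abstract.
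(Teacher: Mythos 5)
Your overall architecture (Grothendieck for the graph fluctuation, smallness of the noise, then a Gronwall argument in $H_{-1}$) is the same as the paper's, but your route differs in that you interpolate through the complete-graph system $\bar\mu^n_\cdot$ coupled to the same Brownian motions, whereas the paper compares $\mu^n_\cdot$ to $\mu_\cdot$ directly via a mild formulation driven by the heat semigroup (equation \eqref{eq:mild2}), controlling the graph term as in Lemma \ref{lem:g^n_tPi} and the noise as in Lemma \ref{lem:z^n_tPi}, and concluding with Gronwall--Henry. The detour through $\bar\mu^n$ is unnecessary and is also where your sketch contains an incorrect step: the difference of the martingale parts, $\frac1n\sum_i\int_0^t\bigl(h'(\theta^{i,n}_s)-h'(\bar\theta^{i,n}_s)\bigr)\dd B^i_s$, cannot be bounded by $\int_0^t\norm{\nu^n_s}_{-1}^2\dd s$, even in expectation via BDG. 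Its quadratic variation is $\frac1{n^2}\sum_i\int_0^t\bigl(h'(\theta^{i,n}_s)-h'(\bar\theta^{i,n}_s)\bigr)^2\dd s$, an average of \emph{squared particle-wise} discrepancies under the synchronous coupling; the $H_{-1}$ distance between the two empirical measures controls differences of averages of test functions, not coupled particle-by-particle differences. (The term is harmless anyway, since the quadratic variation is trivially $O(t/n)$ for fixed $h$ — but then the coupling buys you nothing over estimating the two martingales separately, which is what the paper does through $z^n_t$.)

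The more serious gap is the closure of your Gronwall inequality in the $H_{-1}$ norm. All your estimates are per fixed test function $h$: Itô's formula produces $\tfrac12\langle\cdot,h''\rangle$, and the drift and Grothendieck bounds involve $\norm{h'}_\infty$; none of these constants is controlled by $\norm{h}_{H_1}$, so taking the supremum over the unit ball of $H_1$ to recover $\norm{\nu^n_t}_{-1}$ loses derivatives and the displayed inequality for $\bE[\sup_s\norm{\nu^n_s}_{-1}^2]$ does not follow. This is exactly what the paper's mild formulation is designed to fix: testing against $h_s=e^{(t-s)\Delta/2}h$ eliminates the $h''$ term, the smoothing estimate $\norm{e^{(t-s)\Delta/2}\partial_\theta u}_{-1}\leq C(t-s)^{-1/2}\norm{u}_{-1}$ pays for the extra derivative at the price of an integrable singularity (this is also what makes the Grothendieck vectors $T_j$ uniformly bounded in Lemma \ref{lem:g^n_t}), and the noise is summed mode by mode in an eigenbasis using $\lambda_l=\Theta(l^2)$. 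You relegate this to a ``secondary technical point'' (countable dense family, resolvent representation), but without the semigroup device — or an equivalent mode-by-mode Itô argument exploiting the Laplacian's damping — the key inequality of your proof cannot be derived; it is the main technical content of the paper's argument, not a routine justification. The remaining ingredients of your plan (the factorization of $J(\theta^i-\theta^j)h'(\theta^i)$ for Grothendieck, the $n^{-2}$ normalization matching $\gnorm{P^{(n)}-\mathbf{1}^{(n)}}=o(n^2)$, and the reduction of the second term to the classical result of \cite{cf:BGP10} under hypothesis \eqref{h:mu0}) are sound.
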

The finite time behavior of weakly interacting particle systems on graphs is already known under suitable hypothesis on the initial conditions and on the graph sequence, we refer to Subsection \ref{ss:lit} for a comprehensive literature on the subject. We decide to present Theorem \ref{thm:finTim} because, contrary to all the previous results, it does not require any independence between initial conditions and (the realization of) the sequence of graphs. In particular, even if one accurately assigns the initial conditions for each vertex, the mixing properties of the graph will shuffle all the information and make the empirical measure converge, loosing any memory of the initial coupling. This property is crucial for studying the longtime behavior as pointed out in the next subsections.

Observe that Theorem \ref{thm:finTim} implies the existence of a unique giant component in $\{\xi^{(n)}\}_{n\in\N}$, as pointed out in the next remark.

\begin{rem}
	\label{r:conCom}
	The result is independent of $K$. First observe that this implies the uniqueness of a giant component: if there are two, then one can accurately prepare the initial conditions so to obtain different behaviors on the twos and loose the proximity to \eqref{eq:PDE}. Secondly, with the same argument one deduces that the size of the giant component is asymptotically $n$, i.e. all but $o(n)$ vertices are connected. Finally, the existence comes from the fact that the system cannot synchronize on components of size $o(n)$, no matter the value of $K$. Lemma \ref{lem:connect} shows that condition \eqref{h:gnorm} indeed implies the existence of a giant component of size asymptotically $n$.
\end{rem}

\medskip

\subsection{Long time behavior in the supercritical regime}
In the supercritical regime, we suppose to be already close to the manifold $M$ at time 0. However, since we do not assume any independence between graph and initial data, this hypothesis can be weakened by requiring the initial condition $\mu_0$ to be in the domain of attraction of $M$, i.e. $\mu_0 \in \cP(\bbT) \setminus U$, and using Theorem \ref{thm:finTim}. One can then start after some time $T$ with initial condition given now by $\mu^n_T$ (and dependent on the graph!): if $T$ is big enough, than $\mu^n_T$ will be close to $M$. Observe that the choice of $T$ depends only on how close to $M$ $\mu_t$ has to be, it thus depends only on $\mu_0$.

\medskip

Before stating the theorem, we define the distance of a probability measure from $M$. For $\mu \in \cP(\bbT)$, let
\begin{equation}
\label{d:dist}
	\textup{dist} (\mu,M) := \inf_{\nu \in M} \norm{\mu-\nu}_{-1}.
\end{equation}
We are ready for the main result of this section.
\medskip

\begin{theorem}
	\label{thm:sup} Let $K> 1$. Suppose there exists $\psi \in \bbT$ such that for every $\varepsilon_0 > 0$
	\begin{equation}
	\lim_{n\to \infty} \bP \left( \norm{\mu_0^n - q_\psi}_{-1} \leq 	\varepsilon_0 \right) = 1.
	\end{equation}
	Then, for every positive sequence $\{T_n\}_{n\in\N}$ such that $T_n =\exp(o(n))$, and for all $\varepsilon > 0$ small enough
	\begin{equation}
	\lim_{n\to \infty} \bP \left( \sup_{t\in[0, T_n]}  \textup{dist}(\mu^n_t,M) \leq \varepsilon \right) = 1.
	\end{equation}	
\end{theorem}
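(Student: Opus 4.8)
The plan is to combine the stability of the manifold $M$ for the mean-field dynamics \eqref{eq:PDE} with two smallness inputs: a \emph{deterministic} one for the perturbation caused by the graph, coming from condition \eqref{h:graph} through Grothendieck's inequality, and a \emph{probabilistic}, large-deviation one for the Brownian noise, which is what allows the time horizon to grow like $\exp(o(n))$. First I would write the evolution of $\mu^n_t$ in $H_{-1}$ in mild (Duhamel) form: testing \eqref{eq:k} against $\varphi\in\cC_0^1$ and applying Itô's formula shows that, weakly,
\begin{equation}
\dd\mu^n_t = \Big[\tfrac12\partial_\theta^2\mu^n_t - \partial_\theta\big(\mu^n_t\,(J*\mu^n_t)\big)\Big]\dd t + g^n_t\,\dd t + \dd M^n_t,
\end{equation}
where $M^n_t$ is the martingale built from $\{B^i_\cdot\}$, whose $H_{-1}$-bracket is $O(1/n)$ per unit of time (this is why the dual space $H_{-1}$ is used), and $g^n_t$ is the drift discrepancy between the $P^{(n)}$-interaction and the mean-field $\mathbf{1}^{(n)}$-interaction. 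Writing $J=-K\sin$ and expanding $\sin(\theta^i-\theta^j)$, the pairing of $g^n_t$ with any $\varphi\in\cC_0^1$ has the form $\sum_{i,j}(\xi^{(n)}_{ij}/p_n-1)\langle S_i,T_j\rangle$ with $S_i,T_j$ in a fixed finite-dimensional Hilbert ball (after scaling by $K\norm{\varphi'}_\infty$); Lemmas \ref{lem:g^n_t} and \ref{lem:g^n_tPi}, via Theorem \ref{thm:gro}, then bound $\sup_t\norm{g^n_t}_{-1}$ by $C\,\gnorm{P^{(n)}-\mathbf{1}^{(n)}}/n^2 = o(1)$, \emph{uniformly over all configurations}. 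This is the structural role of \eqref{h:graph}.

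Next I would set up the phase–fluctuation decomposition around $M$. For $\gep$ small the tube $\{\textup{dist}(\cdot,M)<2\gep\}$ admits a unique nonlinear projection onto $M$ (implicit function theorem), so inside it we write $\mu^n_t = q_{\psi^n_t}+v^n_t$ with $v^n_t$ transverse to $T_{q_{\psi^n_t}}M$. Projecting the equation yields an (irrelevant, possibly large) drift–diffusion for the phase $\psi^n_t$ and an equation for $v^n_t$ whose linear part is the linearization $\cL_{q_{\psi^n_t}}$ of \eqref{eq:PDE} at $M$; by \cite{cf:GPP12,cf:BGP10} this operator has, transversally to $M$, spectrum bounded above by $-\gl<0$. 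Duhamel's formula together with a Gronwall argument then give, as long as the path stays in the tube,
\begin{equation}
\label{eq:contr}
\norm{v^n_t}_{-1} \leq e^{-\gl(t-s)}\norm{v^n_s}_{-1} + C\Big(\tfrac1\gl\sup_{r}\norm{g^n_r}_{-1} + \sup_{s\leq r\leq t}\norm{M^n_r-M^n_s}_{-1} + \sup_{s\leq r\leq t}\norm{v^n_r}_{-1}^2\Big),
\end{equation}
the last term absorbing the McKean–Vlasov nonlinearity and being negligible for $\gep$ small.

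Finally, the exponential-time mechanism. Fix a window length $\tau$ with $e^{-\gl\tau}\leq\tfrac14$ and cut $[0,T_n]$ into $N=\lceil T_n/\tau\rceil=\exp(o(n))$ windows. Using an exponential (self-normalized) martingale inequality for $M^n$, whose bracket over one window is $O(\tau/n)$, one obtains a Gaussian-type bound
\begin{equation}
\bP\Big(\sup_{t\in[k\tau,(k+1)\tau]}\norm{M^n_t-M^n_{k\tau}}_{-1}>\gd\Big)\leq e^{-c\gd^2 n},
\end{equation}
so that with probability at least $1-Ne^{-c\gd^2 n}=1-\exp\big(o(n)-c\gd^2 n\big)\to1$ every window is "good". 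Choosing $\gd$, $\gep_0$ and (for $n$ large) the graph error $\tfrac{C}{\gl}\sup_r\norm{g^n_r}_{-1}$ all at most a small fixed multiple of $\gep$, an induction over windows based on \eqref{eq:contr} shows: if $\textup{dist}(\mu^n_{k\tau},M)\leq\gep/2$ then $\textup{dist}(\mu^n_t,M)\leq\gep$ for all $t\in[k\tau,(k+1)\tau]$ and $\textup{dist}(\mu^n_{(k+1)\tau},M)\leq\gep/2$. The base case holds since $\textup{dist}(\mu^n_0,M)\leq\norm{\mu^n_0-q_\psi}_{-1}\leq\gep_0$ with probability tending to $1$; summing over the $N$ windows closes the argument.

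The step I expect to be the main obstacle is this last one: obtaining the martingale concentration at the correct scale $e^{-cn}$, \emph{uniformly} over $O(1)$ windows in the infinite-dimensional space $H_{-1}$, and then making the induction airtight while simultaneously controlling the nonlinear remainder in \eqref{eq:contr} and ensuring that the phase–fluctuation chart stays valid for the whole (random) duration — i.e. that $\textup{dist}(\mu^n_t,M)$ never leaves $[0,\gep]$ on the good event, so that the projection onto $M$ remains well defined. By contrast, the Grothendieck bound on $g^n_t$, though it is the conceptually new ingredient, is not the technical crux: it enters only as an $o(1)$ deterministic drift perturbation and is harmless once the spectral gap of $\cL_{q_\psi}$ is in place.
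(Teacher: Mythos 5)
Your overall architecture (mild/Duhamel formulation in $H_{-1}$, Grothendieck for the graph discrepancy, spectral gap of the linearization, exponential-in-$n$ concentration for the noise, and iteration over $\exp(o(n))$ windows of fixed length) is the same as the paper's. But there is one step where your plan, as formulated, genuinely breaks down: the treatment of the noise. You propose to control $\sup_t\norm{M^n_t-M^n_{k\tau}}_{-1}$ for the ``raw'' martingale $M^n$ with $\langle M^n_t,h\rangle=\frac1n\sum_j\int_0^t h'(\theta^{j,n}_s)\dd B^j_s$, claiming its $H_{-1}$-bracket is $O(1/n)$ per unit time. This is false: in the Fourier basis $e_l$ of $H_{-1}$ the $l$-th mode is the martingale obtained from the test function $e^{il\cdot}/l$, whose derivative has modulus $1$, so \emph{every} mode has bracket of order $t/n$ and the trace over $l\geq1$ diverges; $M^n_t$ is not an $H_{-1}$-valued process and the per-window Gaussian bound you invoke cannot hold. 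The smallness of the noise in $H_{-1}$ comes only from the smoothing of the semigroup inside the stochastic convolution $z^n_t=\int_0^te^{(t-s)L_\psi}\dd M^n_s$: the $l$-th mode is damped by $e^{-\lambda_l(t-s)}$ with $\lambda_l\asymp l^2$ (Proposition \ref{p:eigFun}), which makes the variances summable, and the uniform-in-time control over a window at scale $e^{-Zn\eta^2}$ is then extracted mode by mode via the self-normalized martingale inequality of Theorem \ref{thm:dlPKL04}; this is exactly Lemma \ref{lem:z^n_t}, and it is the technical heart of the proof rather than a routine exponential martingale estimate. A closely related (more minor) slip is your claim that Grothendieck gives $\sup_t\norm{g^n_t}_{-1}\leq C\,\gnorm{P^{(n)}-\mathbf{1}^{(n)}}/n^2$ for the instantaneous drift discrepancy: the pairing you compute is controlled by $\norm{\varphi'}_\infty$, not by $\norm{\varphi}_{H_1}$, so again the bound only exists after convolution with the smoothing semigroup (and around $q_\psi$, where $L_\psi$ has the zero eigenvalue, Lemma \ref{lem:g^n_t} yields a factor $\sqrt t$, harmless on a fixed window, rather than a time-uniform constant).

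Apart from this, your route differs from the paper's in the geometric decomposition: you track a moving phase $\psi^n_t$ and a transverse component $v^n_t$, which forces you to derive and control the phase equation and the cross terms it generates, and to keep the nonlinear chart valid along the random trajectory. The paper avoids this entirely: on each window it linearizes around the \emph{fixed} $q_{\psi_n}$ with $\psi_n=\textup{proj}_M$ of the window's initial datum, so that $P^0_{\psi_n}\nu^n_0=0$ and $\norm{e^{tL_{\psi_n}}\nu^n_0}_{-1}\leq Ce^{-\lambda_1t/2}\norm{\nu^n_0}_{-1}$, runs a Gronwall estimate on the window (treating nonlinearity, graph term and noise as bounded perturbations), and iterates via the Markov property, $\bP(E^n(0,T_N))\geq\bP(E^n(0,T))^N$, which plays the role of your union bound. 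Your moving-frame scheme could in principle be made rigorous (it is the route of the finer analyses in the literature), but it buys nothing for this statement and adds real difficulty; the fixed-phase-per-window argument is both simpler and sufficient. The essential missing ingredient in your proposal remains the correct formulation and proof of the noise estimate described above.
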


Theorem \ref{thm:sup} implies the proximity of the empirical measure to the manifold of solutions of the McKean-Vlasov equation \eqref{eq:PDE} for almost exponential times. On this time scale, Large Deviation phenomena take control of the finite system (e.g. \cite{cf:DG,cf:FW,cf:OV}) making it escape from the stationary solutions.

Observe that we do not prove the closeness to the mean field limit $\mu_\cdot$. Indeed, it is by now well known that, on longtime scales, the mean field limit is not a faithful description of the finite system of $n$ diffusions. In other words, the behavior of $\mu^n_{T_n}$ highly depends on the scale of time $T_n$ under consideration, whereas the dynamics of $\mu_t$ is deterministic and completely known for large $t$, i.e. it sticks to $q_\psi$.

In \cite{cf:BGP14}, a deep analysis of the longtime dynamics for the classical mean field system \eqref{eq:kc} is presented. Namely, it is shown that $\mu_t$ solution to the PDE \eqref{eq:PDE} is a reasonable approximation of $\bar{\mu}^n_t$ for times scales of order $o(\log n)$. On times proportional to $n$, the dynamics of the empirical measure can be coupled to a Brownian motion on $M$ with a non trivial diffusion coefficient that can be explicitly computed (see \cite[Theorem 1.1]{cf:BGP14}). Whereas the PDE prescribes the system to stay synchronized on a fixed phase, the noise induced by the Brownian motions makes this phase oscillate and it turns out that the oscillations become significant on times proportional to the size of the system $n$.

We do not show this property, yet extend the closeness to $M$ for exponential times, whereas in \cite{cf:BGP14} this is shown up to polynomial times.

\medskip

Theorem \ref{thm:sup}, as Theorem \ref{thm:finTim}, does not depend on the speed of convergence of the condition on the graph \eqref{h:graph}. The escaping time is indeed only due to the stochastic nature of the system, given by the Brownian motions, and it cannot be improved as explained above. The reason why one can control the perturbation induced by the graph structure for long times is somehow hidden in the martingale properties of $\mu^n_\cdot$ and in the fact that we do not really analyze the dynamics near $M$ (which can, a priori, depend on the graph). We refer to the proof of the subcritical regime for a clear control on the perturbations given by the graph, through the exponential stability of the stationary solution.

\subsection{Longtime behavior in the subcritical regime}
The subcritical regime is somehow easier than the supercritical regime since there is an unique stable stationary solution. We decide to include this case firstly because, to the author's knowledge, it is missing in the literature and, secondly, because the proof enlightens some aspects hidden in the supercritical regime. As a byproduct, we obtain the equivalent of maximal inequalities for Ornstein-Uhlenbeck processes in infinite dimensional Hilbert spaces, see Corollary \ref{cor:maxIne}.

\medskip

\begin{theorem}
	\label{thm:main} Let $0\leq K<1$. Suppose that condition \eqref{h:mu0} holds, i.e. for  all $\varepsilon_0 > 0$
	\begin{equation}
		\lim_{n\to \infty} \bP \left( \norm{\mu_0^n - \mu_0}_{-1} \leq \varepsilon_0 \right) = 1.
	\end{equation}
	Then, for every positive sequence $\{T_n\}_{n\in\N}$ such that $T_n =\exp(o(n))$, and for all $\varepsilon > 0$ small enough
	\begin{equation}
		\lim_{n\to \infty} \bP \left( \sup_{t\in[0, T_n]} \norm{\mu_t^n - \mu_t}_{-1} \leq \varepsilon \right) = 1.
	\end{equation}	
\end{theorem}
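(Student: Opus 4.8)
The plan is to run a continuation (bootstrap) argument around the incoherent profile $\tfrac1{2\pi}$, which for $0\leq K<1$ is the unique stationary solution of \eqref{eq:PDE} and is \emph{exponentially} attractive: its linearization $\mathcal L\,w=\tfrac12\Delta w-\tfrac1{2\pi}\partial_\theta(J\ast w)$ is, in the Fourier basis, self-adjoint with eigenvalue $\tfrac{K-1}{2}$ on the first mode and $\leq-2$ on all higher modes, so $\|e^{t\mathcal L}\|_{H_{-1}\to H_{-1}}\leq e^{-ct}$ with $c:=\tfrac{1-K}{2}>0$, and $\|\mu_t-\tfrac1{2\pi}\|_{-1}\leq Ce^{-ct}$ for the PDE flow. \emph{Reduction.} Fix a small radius $\rho>0$ (chosen below) and pick $T=T(\rho)$ with $Ce^{-cT}\leq\rho/8$. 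By Theorem~\ref{thm:finTim}, whose hypothesis is exactly \eqref{h:mu0}, we have $\bP(\sup_{[0,T]}\|\mu^n_t-\mu_t\|_{-1}\leq\rho/8)\to1$, hence $\|\mu^n_T-\tfrac1{2\pi}\|_{-1}\leq\rho/4$ with high probability. It then suffices to show that, restarting at time $T$ with the (possibly graph-dependent) datum $\mu^n_T$,
\[
\lim_{n\to\infty}\bP\Big(\sup_{t\in[T,T_n]}\big\|\mu^n_t-\tfrac1{2\pi}\big\|_{-1}\leq\rho\Big)=1 ;
\]
combined with $\|\mu_t-\tfrac1{2\pi}\|_{-1}\leq\rho/8$ for $t\geq T$ and the bound on $[0,T]$, this gives the theorem with $\varepsilon=2\rho$.

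\emph{Mild decomposition and estimates.} Write $v^n_t:=\mu^n_t-\tfrac1{2\pi}\in H_{-1}$ ($\|v^n_t\|_{-1}$ is automatically bounded). Itô's formula for $\langle\mu^n_t,h\rangle$, $h\in\cC^1_0$, together with the splitting $\xi^{(n)}_{ij}/p_n=1+(\xi^{(n)}_{ij}/p_n-1)$, gives for $t\geq T$ the mild identity
\[
v^n_t=e^{(t-T)\mathcal L}v^n_T+\int_T^t e^{(t-s)\mathcal L}\mathcal N(v^n_s)\,ds+\int_T^t e^{(t-s)\mathcal L}g^n_s\,ds+\Phi^n_t ,
\]
where $\mathcal N(v)=-\partial_\theta(v(J\ast v))-\partial_\theta(v(J\ast(\mu_t-\tfrac1{2\pi})))$ collects what is left of the interaction after removing $\mathcal L$ — the algebraic key being $J\ast\tfrac1{2\pi}=0$, so no further linear-in-$v$ term appears — $g^n_s(h)=\tfrac1{n^2}\sum_{i,j}(\xi^{(n)}_{ij}/p_n-1)h'(\theta^{i,n}_s)J(\theta^{i,n}_s-\theta^{j,n}_s)$ is the graph fluctuation, and $\Phi^n_t=\int_T^t e^{(t-s)\mathcal L}dM^n_s$ is the stochastic convolution of the martingale $M^n$ whose bracket against $h$ has density $\tfrac1{n^2}\sum_i(h'(\theta^{i,n}_s))^2$. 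One must use the mild form rather than a differential energy estimate for $\|v^n_t\|_{-1}^2$, since $\mu^n_t$ has infinite quadratic variation in $H_{-1}$ while the semigroup regularizes $M^n$ into an $H_{-1}$-valued process with $\bE\|\Phi^n_t\|_{-1}^2=O(1/n)$ uniformly in $t$. The deterministic ingredients I would prove are: \textbf{(i)} the above contraction, plus the parabolic smoothing $\|e^{t\mathcal L}\|_{H_{-2}\to H_{-1}}\leq C\,t^{-1/2}e^{-ct}$ (since $\mathcal L$ is $\tfrac12\Delta$ plus a bounded rank-two operator); \textbf{(ii)} $\|\mathcal N(v)\|_{-2}\leq C\|v\|_{-1}^2+C\|v\|_{-1}\|\mu_t-\tfrac1{2\pi}\|_{-1}$ (with $\|\cdot\|_{-2}$ the dual norm of $H_2$), using that $J\ast v$ is a degree-one trigonometric polynomial with all norms $\lesssim\|v\|_{-1}$ even when $v$ is only a measure, so that $\|v(J\ast v)\|_{-1}\lesssim\|v\|_{-1}^2$; \textbf{(iii)} $\sup_s\|g^n_s\|_{-2}\leq C\eta_n$ with $\eta_n:=n^{-2}\gnorm{P^{(n)}-\mathbf{1}^{(n)}}=o(1)$ by \eqref{h:graph} — here one writes $h'(\theta^i)J(\theta^i-\theta^j)$ as a bilinear form in the $\R^2$-vectors $(h'(\theta^i)\sin\theta^i,h'(\theta^i)\cos\theta^i)$ and $(\cos\theta^j,\sin\theta^j)$ and applies Grothendieck's inequality (Theorem~\ref{thm:gro}), exactly as in Lemmas~\ref{lem:g^n_t}--\ref{lem:g^n_tPi}.

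\emph{Bootstrap.} Set $\tau_n:=\inf\{t\geq T:\|v^n_t\|_{-1}>\rho\}$. Taking $H_{-1}$-norms in the mild identity and using (i)--(iii), on $[T,\tau_n\wedge T_n]$ one gets
\[
\sup_{[T,t]}\|v^n_s\|_{-1}\leq\|v^n_T\|_{-1}+\big(C\rho+C\varepsilon_T\big)\sup_{[T,t]}\|v^n_s\|_{-1}+C\eta_n+\sup_{[T,t]}\|\Phi^n_s\|_{-1},
\]
where $\varepsilon_T:=\sup_{s\geq T}\|\mu_s-\tfrac1{2\pi}\|_{-1}$. Choosing first $\rho$ small, then $T$ large so that $C\rho+C\varepsilon_T\leq\tfrac12$, then $n$ large so that $C\eta_n\leq\rho/8$, this yields $\sup_{[T,\tau_n\wedge T_n]}\|v^n\|_{-1}\leq2\|v^n_T\|_{-1}+\rho/4+2\sup_{[T,T_n]}\|\Phi^n_t\|_{-1}$. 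On the event $\{\|v^n_T\|_{-1}\leq\rho/4\}\cap\{\sup_{[T,T_n]}\|\Phi^n_t\|_{-1}\leq\rho/16\}$ the right-hand side is $<\rho$, so $\tau_n>T_n$; together with the reduction step this proves the theorem, \emph{provided} $\bP(\sup_{[T,T_n]}\|\Phi^n_t\|_{-1}\leq\rho/16)\to1$, which is Corollary~\ref{cor:maxIne}.

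\emph{The maximal inequality — the main obstacle.} Controlling $\sup_{t\in[T,T_n]}\|\Phi^n_t\|_{-1}$ \emph{uniformly over an exponentially long window} is the crux. Polynomial moment bounds ($\bE\|\Phi^n_t\|_{-1}^p=O(n^{-p/2})$ by Burkholder, then Markov and a union bound) only reach $T_n=o(n^{p/2})$, so genuine sub-Gaussian tails are needed. I would cut $[T,T_n]$ into unit intervals $I_k=[T+k,T+k+1]$ and use the restart $\Phi^n_t=e^{(t-T-k)\mathcal L}\Phi^n_{T+k}+\psi^n_k(t)$ on $I_k$, where $\psi^n_k$ is a fresh stochastic convolution over $I_k$ with $\bE\sup_{I_k}\|\psi^n_k\|_{-1}^2\leq C/n$ (the kernel singularity being integrable over a bounded interval, the semigroup decay keeping the bound uniform). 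The key is a concentration estimate $\bP(\sup_{t\in I_k}\|\psi^n_k(t)\|_{-1}>a)\leq C\exp(-c'na^2)$, \emph{uniform in $k$}: since $J$ is bounded the flow of \eqref{eq:k} on $I_k$ is Lipschitz in the driving Brownian paths, so $\sup_{t\in I_k}\|\psi^n_k(t)\|_{-1}$ is a Lipschitz functional of $(B^i|_{I_k})_{i\leq n}$ with Cameron--Martin constant $O(n^{-1/2})$ (each $B^i$ enters with weight $1/n$, and there are $n$ of them), whence Gaussian concentration about its mean $O(n^{-1/2})$. A union bound over the $\lfloor T_n\rfloor+1$ intervals costs $CT_n\exp(-c'na^2)$, which tends to $0$ exactly when $\log T_n=o(n)$, i.e. $T_n=\exp(o(n))$; on the complementary event the recursion $\|\Phi^n_{T+k+1}\|_{-1}\leq e^{-c}\|\Phi^n_{T+k}\|_{-1}+a$ with $\Phi^n_T=0$ forces $\sup_{[T,T_n]}\|\Phi^n_t\|_{-1}\leq a(1+1/(1-e^{-c}))$, which is $\leq\rho/16$ for $a$ small. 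This self-normalized-type estimate, together with the care required to carry out (ii)--(iii) in the $H_{-1}$ framework where $\mu^n_t$ is merely a measure, is where essentially all the work lies; the rest is the gap-driven Gronwall/continuation above.
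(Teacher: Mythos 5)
Your architecture up to the noise term is exactly the paper's: reduce via Theorem \ref{thm:finTim} to a start near $1/2\pi$, use the mild formulation around $L_{2\pi}$ (Proposition \ref{p:mildLPi}), bound the graph term uniformly in time by Grothendieck plus the spectral gap (Lemma \ref{lem:g^n_tPi}), and close with a gap-driven Gronwall/bootstrap (Lemma \ref{lem:apriori}, estimate \eqref{eq:estCon}). The genuine gap is in the step you yourself flag as the crux: the claim that, on a unit interval $I_k$, $\sup_{t\in I_k}\norm{\psi^n_k(t)}_{-1}$ is a Lipschitz functional of the driving paths $(B^i|_{I_k})_{i\leq n}$ with Cameron--Martin constant $O(n^{-1/2})$, hence sub-Gaussian with tail $\exp(-c' n a^2)$. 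A stochastic convolution is not a pathwise functional of its driver: under a Cameron--Martin shift $(h^1,\dots,h^n)$ the change splits into the shift of the integrators, which is indeed $O(n^{-1/2})(\sum_j\norm{h^j}^2_{CM})^{1/2}$, and the change of the integrand $[\partial_\theta e^{(t-s)L^*_{2\pi}}h](\theta^{j,n}_s)$ integrated against the (shifted) Brownian motions; the latter is a stochastic integral of a pathwise-small integrand, which is small in $L^2$ but admits no almost-sure bound of the form (constant)$\times\norm{h}_{CM}$, so Borell--TIS type concentration cannot be invoked. The same problem reappears if you try to bypass the stochastic integral by expressing $\psi^n_k$ through the mild identity: the graph term then enters with weights $|\xi_{ij}/p_n-1|$, whose absolute sums are \emph{not} controlled by \eqref{h:graph} (for Erd\H{o}s--R\'enyi graphs $\tfrac1{n^2}\sum_{ij}|\xi_{ij}/p_n-1|$ is of order one), and the resulting Lipschitz constant is $O(\max_j\norm{h^j}_{CM})$, not $O(n^{-1/2})$. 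Since this uniform-in-$k$ sub-Gaussian bound is precisely what your union bound over $\sim T_n$ intervals needs, the proof as written does not go through. (Your citation of Corollary \ref{cor:maxIne} is also circular: it concerns $K=0$ and is deduced from Theorem \ref{thm:main}, and your $\Phi^n$ has the interacting particles in the integrand.)

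The paper closes this hole without any exponential concentration: diagonalizing $z^n$ in the Fourier basis, each mode is an Ornstein--Uhlenbeck-type time-changed martingale, and an extension of the Graversen--Peskir maximal inequality (Lemma \ref{lem:PG}, used in Lemma \ref{lem:z^n_tPi}) gives $\bE[\sup_{t\le T}\norm{z^n_t}^2_{-1}]\le C\log(1+2\gamma_K T)/n$; Markov's inequality alone then covers $T_n=\exp(o(n))$, with no restarting, no union bound, and a single application of the Gronwall lemma on all of $[0,T_n]$ thanks to the time-uniform bound on $g^n$. If you want to keep your restart-plus-union-bound scheme, the rigorous replacement for your Lipschitz-concentration step is a self-normalized martingale estimate mode by mode (de la Pe\~na--Klass--Lai), which is exactly how the paper obtains the per-interval exponential bound in the supercritical case (Lemma \ref{lem:z^n_t}) before iterating with the Markov property. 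With one of these two substitutes your argument matches the paper; without it, the key estimate is unproven.
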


Observe that, since $\mu_\cdot$ converges as $t$ tends to infinity to $1/2\pi$ for all initial conditions $\mu_0$, then Theorem \ref{thm:main} implies the proximity of $\mu^n_t$ to the stable solution up to exponential times.

\medskip

Of independent interest, we present a corollary of Theorem \ref{thm:main} in the limit case $K=0$. This result seems to be well known, yet the author was unable to find it elsewhere. 

\medskip

\begin{cor}
	\label{cor:maxIne}
	Let $\mu^n_\cdot$ be the empirical measure of $n$ independent Brownian motions $\{ B^{j,n}_\cdot \}_{j=1,\dots,n}$ on $\bbT$ with initial conditions $\{\theta^i_0\}_{1\leq i \leq n}$ satisfying \eqref{h:mu0}. Then, there exist $C>0$ and $T_0>0$ such that for all $T>T_0$, the following maximal inequality holds:
	\begin{equation}
		\bE \left[ \sup_{t\in[T_0,T]} \norm{\mu^n_t - \frac 1{2\pi}}_{-1}^2 \right] \leq C \, \frac{\log(1+T-T_0)}n.
	\end{equation}
\end{cor}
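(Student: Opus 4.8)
The plan is to reduce the statement to a quantitative, uniform-in-time estimate for the empirical measure of $n$ independent Brownian motions on $\bbT$, viewed as an infinite-dimensional Ornstein--Uhlenbeck-type process in $H_{-1}$. Write $\nu^n_t := \mu^n_t - \tfrac1{2\pi}$, which lies in $H_{-1}$ by the discussion in Appendix B. By Fourier expanding on $\bbT$, decompose $\nu^n_t = \sum_{k\neq 0} c_k^n(t)\, e_k$ where $e_k(\theta)=e^{ik\theta}$ and $c_k^n(t) = \tfrac1n\sum_{j=1}^n e^{-ik B^{j,n}_t}$. The heat semigroup $P_t = e^{t\Delta/2}$ acts diagonally with eigenvalues $e^{-k^2 t/2}$, and the $H_{-1}$ norm weights mode $k$ by $|k|^{-2}$. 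The key structural fact (already implicit in the proof of Theorem \ref{thm:main} at $K=0$) is that $\langle \nu^n_t, e_k\rangle$ evolves as $d c_k^n(t) = -\tfrac{k^2}{2} c_k^n(t)\,dt + dM_k^n(t)$ for a martingale $M_k^n$ with predictable quadratic variation controlled by $1/n$; thus each mode is a one-dimensional OU process with relaxation rate $k^2/2$ driven by a martingale of size $O(n^{-1/2})$.

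The key steps, in order: (i) Fix a threshold $T_0>0$ (any positive constant works; $T_0$ will absorb the transient where low modes have not yet equilibrated). Split $\sup_{t\in[T_0,T]}\norm{\nu^n_t}_{-1}^2 \leq 2\sup_t\norm{P_{t-T_0}\nu^n_{T_0}}_{-1}^2 + 2\sup_t\norm{\int_{T_0}^t P_{t-s}\,dM^n_s}_{-1}^2$ via the mild (variation-of-constants) formulation. The first (deterministic drift) term is bounded by $\norm{\nu^n_{T_0}}_{-1}^2$ since $P_t$ is a contraction on $H_{-1}$, and $\bE\norm{\nu^n_{T_0}}_{-1}^2 = O(1/n)$ by a direct second-moment computation on the Fourier modes using independence of the $B^{j,n}$ (this is exactly the standard $O(1/n)$ fluctuation of an empirical measure of IID samples). (ii) For the stochastic convolution term, treat it mode by mode: $Y_k(t):=\int_{T_0}^t e^{-k^2(t-s)/2}\,dM_k^n(s)$ is, for each $k$, a self-normalized-type OU process, and one has the maximal bound $\bE[\sup_{t\in[T_0,T]}|Y_k(t)|^2] \leq C_k \frac{\log(1+T-T_0)}{n}$ with $C_k \lesssim k^{-2}$ coming from integrating the OU covariance $\int_{T_0}^T e^{-k^2 u}\,du \leq 2/k^2$ together with a Dubins--Schwarz / time-change argument turning the sup into a sup of Brownian motion over a time interval of length $O(k^{-2})$, whose expected square grows like $\log(1+T-T_0)$ — this is where the $\log T$ factor enters. (iii) Sum over $k$: $\bE[\sup_t \norm{\int P\,dM}_{-1}^2] \leq \sum_{k\neq 0} |k|^{-2}\,\bE[\sup_t |Y_k(t)|^2] \leq \frac{C\log(1+T-T_0)}{n}\sum_{k\neq 0}|k|^{-4} < \infty$, using that the extra $|k|^{-2}$ from $C_k$ makes the series converge comfortably. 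Collecting (i)--(iii) gives the claim.

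The main obstacle I expect is step (ii): getting a maximal inequality for the stochastic convolution that is simultaneously uniform over $t\in[T_0,T]$, has the correct $\log(1+T-T_0)$ dependence (rather than a cruder power of $T$ or a linear-in-$T$ bound from naively applying Doob to the whole interval), and is summable in $k$. The naive estimate $\bE\sup_{t\le T}|Y_k(t)|^2 \le 4\sup_{t\le T}\bE|Y_k(t)|^2$ only controls the pointwise variance and not the supremum; a genuinely sharp argument requires either a chaining/Borell--TIS argument over $[T_0,T]$ for the Gaussian-like process $Y_k$, or a decomposition of $[T_0,T]$ into $O(\log T)$ dyadic blocks on each of which Doob's inequality is applied and the OU mixing gives geometric decay of the block contributions — the $\log T$ being the number of blocks. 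One must also be slightly careful that $M_k^n$ is a true martingale (not a Brownian motion), so the Dubins--Schwarz time change produces a random time, and one needs the quadratic variation $\langle M_k^n\rangle_t - \langle M_k^n\rangle_s \le C(t-s)/n$ uniformly to replace it by a deterministic horizon; this bound follows from $|e^{-ik\theta}|=1$ and Itô's formula for $c_k^n$. The remaining pieces — the $O(1/n)$ initial fluctuation, the contraction property of $P_t$ on $H_{-1}$, and the summation over Fourier modes — are routine.
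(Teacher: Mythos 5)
Your skeleton is the same as the paper's: at $K=0$ the mild formulation (Proposition \ref{p:mildLPi} with $J\equiv 0$) leaves only the semigroup term and the stochastic convolution $z^n_t$, which is estimated Fourier mode by Fourier mode, each mode being an Ornstein--Uhlenbeck--type process with relaxation rate of order $l^2$ driven by a martingale whose quadratic variation grows at rate $O(1/n)$; the per-mode maximal bound of order $\frac{1}{l^2 n}\log(1+l^2 T)$ is then summed over the modes. This is precisely Lemma \ref{lem:z^n_tPi}(2), whose proof rests on Lemma \ref{lem:PG}, the extension to complex martingales of the Graversen--Peskir maximal inequality for OU processes \cite{cf:GP}.

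There are, however, two concrete gaps. (a) The step you yourself flag as the main obstacle is not established by either mechanism you sketch. After Dubins--Schwarz the supremum does not become a supremum of Brownian motion over an interval of length $O(k^{-2})$: that would have expected square $O(k^{-2})$ and no $T$-dependence at all. Writing $Y_k(t)=e^{-k^2 t/2}A_t$ with $\langle A\rangle_t\leq \frac{C}{nk^2}\left(e^{k^2 t}-1\right)$, what is needed is an estimate of the type $\bE\bigl[\sup_{u\leq U}W_u^2/(1+u)\bigr]\lesssim \log\log U$ with $U\asymp e^{k^2(T-T_0)}$, i.e. exactly the self-normalized bound isolated in Lemma \ref{lem:PG} (note also that the driving martingale is complex and non-Gaussian, so Borell--TIS is not available; the uniform quadratic-variation bound is what replaces Gaussianity, as in the paper). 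The dyadic-block alternative is mis-counted for the same reason: Doob does not control the supremum of the (non-martingale) convolution inside a block whose length grows with $T$; a working block argument uses $O\bigl(k^2(T-T_0)\bigr)$ blocks of length $\asymp k^{-2}$ and a sub-Gaussian maximum over them, the logarithm being the log of the number of blocks. So step (ii) needs a Graversen--Peskir-type lemma as an input; as written it would not produce the factor $\log(1+T-T_0)$. (b) Step (i) asserts $\bE\norm{\nu^n_{T_0}}_{-1}^2=O(1/n)$ ``as for iid samples'', but hypothesis \eqref{h:mu0} gives neither iid nor uniform initial data, only convergence in probability of $\mu^n_0$ in $H_{-1}$ at no quantified rate. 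Independence of the Brownian motions yields an $O(1/n)$ bound only for the fluctuation of $\mu^n_{T_0}$ around $g_{T_0}*\mu^n_0$ (with $g_{T_0}$ the heat kernel), while the remaining term $\norm{e^{T_0\Delta/2}\bigl(\mu^n_0-\tfrac 1{2\pi}\bigr)}_{-1}$ is controlled only through the contraction factor $e^{-T_0/2}$ times a quantity that \eqref{h:mu0} makes small in probability, not $O(n^{-1/2})$ in $L^2$. To close this you must either strengthen the assumption on the initial data (e.g.\ an $L^2$ rate, or iid samples), let $T_0$ grow with $n$, or confine the $L^2$ statement to the stochastic convolution, which is the part the paper itself isolates in Lemma \ref{lem:z^n_tPi}(2).
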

Corollary \ref{cor:maxIne} shows a maximal inequality for the empirical measure of $n$ independent Brownian motions on the torus, establishing the SPDE version of the result for Ornstein-Uhlenbeck processes presented in \cite{cf:GP} for stochastic ordinary differential equations.

Observe that if the initial conditions and the graph are exchangeable (not necessarily independent), then Corollary \ref{cor:maxIne} and a classical result by Sznitman (\cite[Proposition 2.2]{cf:szni}) implies the creation of chaos for all times $T_n=o(\exp(n))$.

\medskip

\subsection{Organization of the paper}
This section ends presenting the existing literature and giving an outline of the proof for the three theorems.

Sections \ref{s:longTimeM}, \ref{s:longTimePi} and \ref{s:finTim} concern the proofs of the three results. In particular, Section \ref{s:longTimeM} is devoted to the long time dynamics close to $M$, it starts from the derivation of a mild formulation for the empirical measure, then proceeds with the control on the graph and the noise, and it ends with the proof of Theorem \ref{thm:sup}. Section \ref{s:longTimePi} concerns the subcritical regime where a different control on the perturbations is given. Finally, Section \ref{s:finTim} proves Theorem \ref{thm:finTim} by using slight variations of the previous techniques.

Appendix A gives a few examples of graph sequences that satisfy condition \eqref{h:graph}, together with remarks on the degrees and connectivity of such sequences. Appendix B contains information about the Hilbert spaces $H_{-1, \omega}$ and the linear operator $L_\psi$.

\medskip

\subsection{A glance at the existing literature}
\label{ss:lit}
The results presented here are at a crossroads of two different research areas: the long time dynamics of stochastic differential equations and the role of a network in a mean field model.

Concerning the long time behavior of weakly interacting particle systems, Theorems \ref{thm:sup} and \ref{thm:main} can be seen as a complement to the previous results presented in \cite{cf:BGP14}, filling the gap of the exponential time scale which has not been addressed so far. To the author's knowledge, they also represent the first equivalent, in infinite dimensional Hilbert spaces, to the famous result for stochastic ordinary differential equations in $\R^d$ by Friedlin and Wentzell (\cite{cf:FW}).

Looking at variations on the same model, the behavior of the classical Kuramoto model with intrinsic frequencies has been studied in \cite{cf:LP}, showing that the longtime dynamics is indeed dependent on the quenched setting given by the frequencies. A macroscopic constant speed in the phase appears on time scales of order $O(\sqrt{n})$, making the effects of the noise vanish. The results and the techniques presented here should be easily adaptable to this case showing the proximity to the manifold of solutions for long times, yet loosing the precise characterization of the motion on $M$ for which a deeper analysis is needed.

The Kuramoto model is an example of system which admits more than one stable stationary solution, a continuous manifold as already precised, and that's one of the reasons why it shows a rich phenomenology depending on the time scale under consideration. For similar results on different models, one has to dip in the context of stochastic partial differential equations (SPDEs)  with vanishing noise. Since the aim of this work is more oriented on the effects of the network rather than the longtime dynamics of SPDEs, the author refers to the bibliography in \cite{cf:BGP14,cf:LP} for a more comprehensive discussion.

\medskip

Turning to interacting particle systems on graphs, the subject has become an interesting topic in the mathematical community given the several applications to complex systems, in particular regarding the Kuramoto model and synchronization phenomena (e.g. \cite{cf:AKur,cf:RPJK16}), yet it has always been addressed on a finite time scale or up to times slowly diverging on $n$, i.e. $T_n = O (\log n)$.

The first articles \cite{cf:BBW,cf:DGL} attack the problem under a propagation of chaos viewpoint, requiring independent and identically distributed initial conditions and also independent of the realization of the graph. In this setting, the condition on the graph boils down to a condition on the degrees only so that very general graphs are allowed (see again \cite[Remark 1.4]{cf:DGL} and \cite[Remark 1.2]{cf:CDG}). Regarding more inhomogeneous settings, \cite{cf:L18} extends \cite{cf:DGL} to graphons and \cite{cf:RO} presents a Large Deviation result again in the graphon setting. Observe that \cite{cf:RO} already makes use of Grothendieck's inequality and the norm $\norm{\cdot}_{\infty \to 1}$ to control the graphs fluctuations. Up to now, the only result not assuming independence in the initial data is given by \cite{cf:CDG}, where general systems of interacting particles are defined on Erd\H{o}sh-Rényi random graphs and the empirical measure is shown to satisfy a Law of Large Number and a Large Deviation principle, implying the convergence to the respective mean field limit.

In the deterministic setting, the Kuramoto model has been studied on different networks with various hypothesis on the initial conditions, we refer to \cite{cf:CM16, cf:Med19} and references therein.

In all the cited works, the condition on the normalization $p_n$ is slightly stronger, or equivalent, to the one required in \eqref{h:graph}, see in particular Propositions \ref{p:er} and \ref{p:ram} in Appendix A. If $np_n$ is not diverging as $n$ tends to infinity, i.e. the case of sparse graphs, the limiting behavior of the empirical measure seems to be rather different from the mean field limit, see \cite{cf:LRW, cf:ORS}.

\medskip

\subsection{Outline of the proofs}
The three theorems are proven in a similar way and the main ingredients are given by
\begin{enumerate}
	\item A mild formulation satisfied by $\mu^n_\cdot$ for each $n\in \N$;
	\item The control on the perturbations given by the graph structure through Grothendieck's Inequality;
	\item The control on the random perturbations given by the Brownian motions through maximal inequalities for self-normalized processes.
\end{enumerate}
The mild formulation will be different in all the three cases and will depend on the linear dynamics around $M$ or $1/2\pi$ or on the properties of $\mu_t$. In Section \ref{s:longTimeM}, we give a full derivation of the stochastic partial differential equation satisfied by $\mu^n_t - q_\psi$ in a neighborhood of $M$.

The control on the graph will also depend whether there is a strong contraction given by the dynamics, or not. Whenever the evolution  is contracting in all direction, as in the subcritical case around $1/2\pi$, these perturbations can be controlled uniformly in time, we refer to Lemma \ref{lem:g^n_tPi} for a precise statement.

A fine control on the random perturbations turns out to be rather delicate and one has to exploit all the properties associated to the Hilbert structure as well as the ones associated to the linear dynamics around the stationary solutions to get the job done. We give two independent explicit proofs:
\begin{itemize}
	\item Around $M$, we study the noise using of a strong result on self-normalized martingales (\cite{cf:dlPKL04});
	\item Around $1/2\pi$, we extend a result on maximal inequalities for Ornstein-Uhlenbeck processes (\cite{cf:GP}), to an infinite dimensional setting.
\end{itemize}

\medskip

Once the perturbations are controlled, a Gronwall-like lemma is used to bound the difference between $\mu^n_\cdot$ and the relative target. In the supercritical case, we need to set up an (easy) iterative scheme in order to estimate the distance between $\mu^n_\cdot$ and $M$ on bounded time intervals, and then make use of the martingale property of system $\eqref{eq:k}$ to extend the result up to almost exponential times; in the subcritical case, the result is directly obtained by the bound on the noise, the graph perturbations are indeed controlled for all times.

\medskip

\section{Longtime dynamics close to $M$}
\label{s:longTimeM}
In a neighborhood of $M$, one can exploit the properties of the linear dynamics around $q_\psi$. For $u \in \cL^2_0$, let
\begin{equation}
\label{def:linOp_q}
	L_\psi u  \; := \; \tfrac 12 \partial^2_\theta u - \partial_\theta\left[u(J*q_\psi) + q_\psi (J*u)\right],
\end{equation}
be the linear operator at $q_\psi$, its domain is given by $D(L_\psi) = \{ u \in \cC^2(\bbT), \; \int_{\bbT} u(\theta) \dd \theta = 0\}$. The operator $L_\psi$ is self-adjoint in $H_{-1,1/q_\psi}$ and its adjoint $L^*_\psi$ in $\cL^2_0$ has the following expression
\begin{equation}
\label{def:linOp_q*}
L^*_\psi u  \; = \; \tfrac 12 \partial^2_\theta u + (J*q_\psi) \partial_\theta u - (J*  q_\psi \, \partial_\theta u) - \int_{\bbT}  (J* q_\psi \,\partial_\theta u),
\end{equation}
and domain $D(L^*_\psi)=D(L_\psi)$.

We recall here the most important properties of $L_\psi$, referring to Appendix B for more informations. The linear operator $-L_\psi$ has compact resolvent and its spectrum lies in $[0,\infty)$: the smallest eigenvalue  $\lambda_0 :=0$, associated to the eigenfunction $\partial_\theta q_\psi$, is isolated from the rest of the spectrum. In particular, this implies that $H_{-1}$ can be decomposed into a direct sum $T_\psi \oplus N_\psi$, where $T_\psi = \textup{Span} (\partial_\theta q_\psi)$; we denote by $P^0_\psi$ the projection on $T_\psi$ along $N_\psi$ and $P^s_\psi = 1- P^0_\psi$. Observe that both $P^0_\psi$ and $P^s_\psi$ commute with $L_\psi$ (e.g. \cite{cf:henry}).

Let $\{\lambda_0 <\lambda_1 \leq \dots\}\subset [0,\infty)$ denote the set of eigenvalues and let $\{e^\psi_l\}_{l=0,1,\dots}$ be the correspondent set of eigenfunctions, normalized in $H_{-1,1/q_\psi}$, i.e.
\begin{equation}
\label{d:eig}
	-L_\psi e^\psi_l = \lambda_l e^\psi_l, \quad\text{ for } l=0,1,\dots.
\end{equation}
Observe that $e^\psi_l \in \cC^\infty(\bbT)$. Moreover, the eigenvalues do not depend on the phase $\psi$, whereas the eigenfunctions do in a rather simple way given by the rotation symmetry of the system, i.e.
\begin{equation}
	e^\psi_l (\cdot) = e^\theta_l (\cdot + \psi - \theta), \quad \text{ for } l=0,1,\dots.
\end{equation}
As a matter of fact, we will study the system only around some $q_\psi$. The dual eigenfunctions $f^\psi_l$ associated to $L^*_\psi$ will play an important role for studying the noise perturbation, their properties are studied in Proposition \ref{p:eigFun}.

From the previous properties one deduces that $L_\psi$ (respectively $L^*_\psi$) generates a strong continuous semigroup on $H_{-1}$ (resp. $H_1$) that we denote by $e^{tL_\psi}$ (resp. $e^{tL^*_\psi}$). These semigroups have many properties that we will recall and use throughout this section, see Proposition \ref{p:linSem} for a general statement.

\medskip

A final remark: we use the letter $C$ for all the constants even if they are possibly different, the value of $C$ can change from one line to another if the constant is replaced by another constant and the context is clear.


\subsection{The mild formulation around $q_\psi \in M$}
As shown in \cite{cf:BGP14}, $\bar{\mu}^n_\cdot$ satisfies a SPDE written in mild form once it is close to $M$; in this subsection we extend this formulation to $\mu^n_t$. Let $\nu^n_\cdot:= \mu^n_\cdot - q_\psi$, then

\medskip

\begin{proposition}
	\label{p:mildL}
	The process $\nu^n_t \in H_{-1}$ satisfies the following stochastic partial differential equation in $\cC^0\left([0,T], H_{-1} \right)$:
	\begin{equation}
	\label{eq:lMild2}
	\nu^n_t = e^{t L_\psi}\nu^n_0 - \int_{0}^{t} e^{(t-s)L_\psi} \partial_\theta \left[\nu^n_s (J*\nu^n_s)\right] \dd s - g^n_t + z^n_t,
	\end{equation}
	where
	\begin{equation}
	g^n_t = \frac 1{n^2} \sum_{i,j=1}^{n} \int_0^t \left(\frac{\xi_{ij}}{p_n} - 1\right)  e^{(t-s)L_\psi} \partial_\theta \left[\delta_{\theta^{i,n}_s} ( J*\delta_{\theta^{j,n}_s})\right] \dd s,
	\end{equation}
	and $z^n_t \in H_{-1}$ is defined for $h \in H_1$ by
	\begin{equation}
	\langle z^n_t, h \rangle_{-1,1} = \frac 1n \sum_{j=1}^n \int_0^t \left[\partial_\theta e^{(t-s)L^*_\psi}  h\right] (\theta^{j,n}_s) \dd B^j_s.
	\end{equation}
\end{proposition}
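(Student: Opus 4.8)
\emph{Proof sketch.} The plan is the usual passage from the weak form of the particle dynamics to a mild (Duhamel) formula, the only twist being that the reference profile is the synchronized state $q_\psi$, so that the linearized generator $L_\psi$ of \eqref{def:linOp_q} is the natural object. Fix $T>0$ and $t\in[0,T]$, and take first a fixed test function $h\in\cC^1_0$. Applying It\^o's formula to $s\mapsto h(\theta^{i,n}_s)$ along \eqref{eq:k}, summing over $i$, dividing by $n$, and integrating by parts, one gets an identity for $\langle\mu^n_t,h\rangle$ whose drift is $\langle\tfrac12\partial_\theta^2\mu^n_s-\partial_\theta[\mu^n_s(J*\mu^n_s)],h\rangle$ plus a graph term, once the splitting $\xi^{(n)}_{ij}/p_n=1+(\xi^{(n)}_{ij}/p_n-1)$ is used to isolate the genuine mean-field interaction $\frac1{n^2}\sum_{i,j}h'(\theta^{i,n}_s)J(\theta^{i,n}_s-\theta^{j,n}_s)=\langle\mu^n_s,h'(J*\mu^n_s)\rangle=-\langle\partial_\theta[\mu^n_s(J*\mu^n_s)],h\rangle$. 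Writing $\mu^n_s=q_\psi+\nu^n_s$ and using that $q_\psi$ is a stationary solution of \eqref{eq:PDE} (so that $\tfrac12\partial_\theta^2 q_\psi=\partial_\theta[q_\psi(J*q_\psi)]$), the drift pairing collapses, by the very definition \eqref{def:linOp_q} of $L_\psi$, to $\langle L_\psi\nu^n_s,h\rangle-\langle\partial_\theta[\nu^n_s(J*\nu^n_s)],h\rangle$; each summand of the graph term equals $-\langle\partial_\theta[\delta_{\theta^{i,n}_s}(J*\delta_{\theta^{j,n}_s})],h\rangle$. This yields the weak identity
\begin{equation*}
\langle\nu^n_t,h\rangle=\langle\nu^n_0,h\rangle+\int_0^t\langle L_\psi\nu^n_s,h\rangle\dd s-\int_0^t\langle\partial_\theta[\nu^n_s(J*\nu^n_s)],h\rangle\dd s-\int_0^t\frac1{n^2}\sum_{i,j=1}^n\Big(\tfrac{\xi^{(n)}_{ij}}{p_n}-1\Big)\big\langle\partial_\theta[\delta_{\theta^{i,n}_s}(J*\delta_{\theta^{j,n}_s})],h\big\rangle\dd s+\frac1n\sum_{j=1}^n\int_0^t h'(\theta^{j,n}_s)\dd B^j_s.
\end{equation*}

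Second, I would pass to the mild form by the standard device of a time-dependent test function. For the fixed $t$ and $h$ smooth, set $\phi_s:=e^{(t-s)L^*_\psi}h$ for $s\in[0,t]$; by analyticity of the adjoint semigroup (the properties recalled in Appendix~B, Proposition~\ref{p:linSem}) $\phi_s$ is smooth for $s<t$ and solves the backward equation $\partial_s\phi_s=-L^*_\psi\phi_s$. Feeding $\phi_s$ into the weak identity — legitimately, since it is $\cC^1$ in $s$ with values in $D(L^*_\psi)$, or equivalently by applying It\^o directly to $s\mapsto\phi_s(\theta^{i,n}_s)$ — produces the extra term $\int_0^t\langle\nu^n_s,\partial_s\phi_s\rangle\dd s=-\int_0^t\langle\nu^n_s,L^*_\psi\phi_s\rangle\dd s=-\int_0^t\langle L_\psi\nu^n_s,\phi_s\rangle\dd s$, which cancels the contribution $\int_0^t\langle L_\psi\nu^n_s,\phi_s\rangle\dd s$ coming from the drift (here one uses that $L^*_\psi$ is the $\cL^2_0$-adjoint of $L_\psi$, extended to the pairing $\langle\cdot,\cdot\rangle_{-1,1}$). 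What survives, after using the duality $\langle u,e^{\tau L^*_\psi}h\rangle_{-1,1}=\langle e^{\tau L_\psi}u,h\rangle_{-1,1}$, is precisely $\langle\nu^n_t,h\rangle=\langle e^{tL_\psi}\nu^n_0-\int_0^t e^{(t-s)L_\psi}\partial_\theta[\nu^n_s(J*\nu^n_s)]\dd s-g^n_t+z^n_t,\,h\rangle$, with $g^n_t$ and $z^n_t$ exactly as in the statement. Since $\cC^1_0$ is dense in $H_1$, the identity \eqref{eq:lMild2} holds in $H_{-1}$, and the continuity of $\nu^n_\cdot$ in $\cC^0([0,T],H_{-1})$ follows from the continuity in $t$ of each of the four terms on the right-hand side (using continuity of $t\mapsto\delta_{\theta^{i,n}_t}$ in $H_{-1}$ and strong continuity of the semigroup).

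Finally, I would check that each object genuinely lies in $H_{-1}$. Since $J=-K\sin$ is smooth and bounded, $J*\nu^n_s$ is a bounded smooth function, hence $\nu^n_s(J*\nu^n_s)$ is a finite signed measure and $\partial_\theta[\nu^n_s(J*\nu^n_s)]$ sits in a space slightly larger than $H_{-1}$ on which the parabolic smoothing bound $\norm{e^{\tau L_\psi}v}_{-1}\leq C\tau^{-1/2}\norm{v}$ from Proposition~\ref{p:linSem} makes the Bochner integral $\int_0^t e^{(t-s)L_\psi}\partial_\theta[\nu^n_s(J*\nu^n_s)]\dd s$ converge in $H_{-1}$; the same estimate handles $e^{tL_\psi}\nu^n_0\in H_{-1}$ and, term by term, $g^n_t\in H_{-1}$. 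For $z^n_t$, It\^o's isometry and the smoothing bound $\norm{\partial_\theta e^{\tau L^*_\psi}h}_\infty\leq C\tau^{-\alpha}\norm{h}_{H_1}$ for some $\alpha\in(0,1/2)$ (from Proposition~\ref{p:linSem} together with the Sobolev embedding on $\bbT$) give $\bE\big[\langle z^n_t,h\rangle^2\big]\leq\frac1n\int_0^t\norm{\partial_\theta e^{(t-s)L^*_\psi}h}_\infty^2\dd s\leq\frac{C}{n}\norm{h}_{H_1}^2$, so $h\mapsto\langle z^n_t,h\rangle$ is a.s. a bounded functional on $H_1$, i.e. $z^n_t\in H_{-1}$ a.s. The main obstacle is precisely this functional-analytic bookkeeping: tracking the regularity of $\nu^n_s$, $\nu^n_s(J*\nu^n_s)$ and $\delta_{\theta^{i,n}_s}(J*\delta_{\theta^{j,n}_s})$ in the scale around $H_{-1}$, and making rigorous the smoothing and duality properties of the pair $(e^{tL_\psi},e^{tL^*_\psi})$ together with the time-dependent test-function manipulation — the rest is routine It\^o calculus.
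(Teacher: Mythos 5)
Your proposal is correct and follows essentially the same route as the paper: It\^o's formula with the time-dependent test function $e^{(t-s)L^*_\psi}h$ so that the backward equation cancels the drift, then duality $\langle u,e^{\tau L^*_\psi}h\rangle_{-1,1}=\langle e^{\tau L_\psi}u,h\rangle_{-1,1}$ plus the smoothing bound $\norm{e^{\tau L_\psi}\partial_\theta(\cdot)}_{-1}\leq C\tau^{-1/2}\norm{\cdot}_{-1}$ to identify the mild equation in $H_{-1}$, with the wellposedness and continuity of $g^n_\cdot$ and $z^n_\cdot$ checked separately (the paper defers these to Lemmas \ref{lem:g^n_t} and \ref{lem:z^n_t}). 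One small caveat: your It\^o-isometry bound for $z^n_t$ needs $\norm{\partial_\theta e^{\tau L^*_\psi}h}_\infty\leq C\tau^{-\alpha}\norm{h}_1$ with $\alpha<\tfrac12$, which does not follow from Proposition \ref{p:linSem} plus Sobolev embedding alone (that gives only the borderline $\tau^{-1/2}$) but does follow by standard interpolation for the analytic semigroup, or by the paper's eigenfunction expansion in Lemma \ref{lem:z^n_t}.
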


\begin{proof}
	Let $F=F_t(\theta) \in \cC^{1,2} \left([0, \infty)\times\bbT \right)$, with $\int F_t = 0$ for all $t\geq 0$. For some $t \geq 0$, an application of Ito formula, together with the definition of $L^*_\psi$ gives
	\begin{equation}
	\begin{split}
	\label{eq:lMild0}
	\langle \mu^n_t - q_\psi , F_t \rangle & = \langle \mu^n_0 - q_\psi , F_0 \rangle +  \int_0^t \langle \mu^n_s - q_\psi , \partial_s F_s + L^*_\psi F_s \rangle \dd s + \\
	& + \int_0^t \langle (\mu^n_s - q_\psi) (J*(\mu^n_s - q_\psi)),  \partial_\theta F_s \rangle \dd s + G^n_t (F) + Z^n_t(F),
	\end{split}
	\end{equation}
	where
	\begin{equation}
	\begin{split}
	G^n_t (F) & = \frac 1{n^2} \sum_{i,j=1}^{n} \int_0^t \left(\frac{\xi_{ij}}{p_n} - 1\right) J(\theta^{i,n}_s- \theta^{j,n}_s) \partial_\theta F_s (\theta^{i,n}_s) \dd s,\\
	Z^n_t (F) & = \frac 1n \sum_{j=1}^n \int_0^t \partial_\theta F_s (\theta^{j,n}_s) \dd B^j_s.
	\end{split}
	\end{equation}
	The properties of $e^{t L^*_\psi}$, see Proposition \ref{p:linSem}, assure that the function
	\begin{equation}
	F = F_s (\theta) = e^{(t-s)L^*_\psi} h(\theta), \quad \text{ for some } h\in \cC^2(\bbT), \; \int h = 0,
	\end{equation}
	is $\cC^{1,2} ([0,t]\times \bbT)$. But then $\partial_s F_s = - L^*_\psi F_s$ and one obtains
	\begin{equation}
	\label{eq:lMild1}
	\langle \nu^n_t , F_t \rangle  = \langle \nu^n_0,  e^{tL^*_\psi} h \rangle
	+ \int_0^t \langle \nu^n_s (J*\nu^n_s),  \partial_\theta e^{(t-s)L^*_\psi} h \rangle \dd s + g^n_t (h) + z^n_t(h),
	\end{equation}
	where we have used the definition of $\nu^n_t$ and the notations
	\begin{align}
	\label{eq:lg_n^t}
	g^n_t (h) & = \frac 1{n^2} \sum_{i,j=1}^{n} \int_0^t \left(\frac{\xi_{ij}}{p_n} - 1\right) J(\theta^{i,n}_s- \theta^{j,n}_s)  \left[ \partial_\theta e^{(t-s)L^*_\psi} h \right]  (\theta^{i,n}_s) \dd s,\\
	z^n_t (h) & = \frac 1n \sum_{j=1}^n \int_0^t \left[\partial_\theta e^{(t-s)L^*_\psi} h\right]  (\theta^{j,n}_s) \dd B^j_s.
	\end{align}
	We aim at proving that \eqref{eq:lMild1} is the weak formulation of the mild equation \eqref{eq:lMild2} in $H_{-1}$.
	
	\medskip
	
	Let $\{\nu_l\}_{l\geq 1} \subset \cL^2_0$ such that $\nu_l \xrightarrow{l\uparrow\infty} \nu^n_0$ in $H_{-1}$. Then, for $h \in \cC^2$
	\begin{equation}
	\label{b:nu_l}
	\langle \nu_l , e^{t L^*_\psi} h \rangle_{-1,1} = 	\left( \nu_l , e^{t L^*_\psi} h \right) =  \left(e^{t L_\psi} \nu_l ,  h\right)   = 	\langle e^{t L_\psi} \nu_l ,  h \rangle_{-1,1}.
	\end{equation}
	By continuity of the operators, $e^{t L_\psi}  \nu_l$ converges in $H_{-1}$ to $e^{t L_\psi} \nu^n_0$ as $l\uparrow\infty$. Taking the limit for $l\uparrow \infty$ in both sides of \eqref{b:nu_l}, we deduce
	\begin{equation}
	\langle \nu^n_0 , e^{t L^*_\psi} h \rangle_{-1,1} = \langle e^{t L_\psi} \nu^n_0 ,  h \rangle_{-1,1}.
	\end{equation}
	
	We now focus on
	\begin{equation}
	\omega^n_s := \nu^n_s (J*\nu^n_s).
	\end{equation}
	Consider $\{\nu_{s,l}\}_{l \geq 1} \subset \cL^2_0$ which converges to $\nu^n_s$ in $H_{-1}$ as $l\uparrow\infty$, and define
	\begin{equation}
	\omega_{s,l} := \nu_{s,l} (J*\nu^n_s).
	\end{equation}
	For any $l\geq 1$, it holds
	\begin{equation}
	\begin{split}
	& \langle \omega_{s,l} , \partial_\theta e^{(t-s) L^*_\psi} h \rangle_{-1,1} = \left( \omega_{s,l}, \partial_\theta e^{(t-s) L^*_\psi}  h  \right) = \\
	& = - \left( e^{(t-s) L_\psi} \partial_\theta \, \omega_{s,l},  h  \right) = - \langle e^{(t-s) L_\psi} \partial_\theta \, \omega_{s,l},  h  \rangle_{-1,1}.
	\end{split}
	\end{equation}
	Using the properties of the semigroup one obtains
\begin{equation}
	\begin{split}
	\left| \langle e^{(t-s) L_\psi} \partial_\theta (\omega_{s,l}-\omega^n_s) , h \rangle_{-1,1} \right|  \leq \norm{h}_1 \norm{e^{(t-s) L_\psi} \partial_\theta (\omega_{s,l} - \omega^n_s)}_{-1} \leq \\
	\leq  \norm{h}_1 \frac{C}{\sqrt{t-s}} \norm{\partial_\theta (\omega_{s,l} - \omega^n_s)}_{-2} = \norm{h}_1 \frac{C}{\sqrt{t-s}} \norm{\omega_{s,l} - \omega^n_s}_{-1},
	\end{split}
\end{equation}
	which implies
	\begin{equation}
	\label{b:contHeat}
	\norm{e^{(t-s) L_\psi} \partial_\theta (\omega_{s,l}-\omega^n_s)}_{-1} \leq \frac{C}{\sqrt{t-s}} \norm{\omega_{s,l}-\omega^n_s}_{-1}.
	\end{equation}
	Since $h$ is regular and $\omega_{s,l} \xrightarrow{l\uparrow\infty} \omega^n_s$ in $H_{-1}$, this implies
	\begin{equation}
	\langle \omega^n_s , \partial_\theta e^{(t-s) L^*_\psi} h \rangle_{-1,1} = - \langle e^{(t-s) L_\psi} \partial_\theta \, \omega^n_s , h \rangle_{-1,1}.
	\end{equation}
	
	We now observe from \eqref{b:contHeat} that
	\begin{equation}
	\norm{e^{(t-s) L_\psi} \partial_\theta \omega^n_s}_{-1} \leq \frac{C}{\sqrt{t-s}}
	\end{equation}
	thus the integral in \eqref{eq:lMild2}
	\begin{equation}
	\label{eq:stocInt}
	\int_{0}^{t} e^{(t-s)L_\psi} \partial_\theta \left[ \nu^n_s (J*\nu^n_s)\right]\dd s
	\end{equation}
	is almost surely finite. Using \cite[Theorem 1, p.133]{cf:YOS}, we deduce that  \eqref{eq:stocInt} makes sense as a Bochner integral in $H_{-1}$. The continuity is a direct consequence of the continuity of $e^{t L_\psi}$. 
	
	\medskip
	
	Assume that $g^n_t(h)= \langle g^n_t, h \rangle_{-1,1}$ and $z^n_t(h) = \langle z^n_t, h\rangle_{-1,1}$ are well defined and continuous with respect to $t$ for all $h\in H_1$; we have shown that
	\begin{equation}
	\label{eq:lMild3}
	\begin{split}
	\langle  \nu^n_t  ,   h \rangle_{-1,1}& =  \langle e^{t L_\psi} \nu^n_0 , h \rangle_{-1,1} + \\
	& - \langle \int_0^t e^{(t-s) L_\psi} \partial_\theta  \left[ \nu^n_s(J*\nu^n_s) \right] \dd s,  h \rangle_{-1,1} - \langle g^n_t, h \rangle_{-1,1} + \langle z^n_t, h \rangle_{-1,1}.
	\end{split}
	\end{equation}
	Since \eqref{eq:lMild3} holds for all $h \in H_1$, the identity \eqref{eq:lMild2} follows. All elements in \eqref{eq:lMild2} take values in $\cC^0([0,T], H_{-1})$ and the proof is then concluded modulo regularity and wellposedness of $g^n_\cdot$ and $z^n_\cdot$. We refer to Lemma \ref{lem:g^n_t} and Lemma \ref{lem:z^n_t} which are presented in the next subsection.
\end{proof}

\subsection{Control on the perturbations} Two kinds of perturbations are present in the SPDE \eqref{eq:lMild2}: $z^n_\cdot$ given by the stochastic nature of the system and $g^n_\cdot$ given by the presence of a network structure. In this subsection, we exhibit the control over the two perturbations. Observe that all the estimates are independent of $\psi$.

We start with the control on the graph structure, which uses Grothendieck's Inequality presented in Theorem \ref{thm:gro}.

\medskip

\begin{lemma}[Wellposedness and bounds on $g^n_t$]
	\label{lem:g^n_t}
	For $n\in\N$ and $t\geq 0$, let $g^n_t$ be given by
	\begin{equation}
	g^n_t = \frac 1{n^2} \sum_{i,j=1}^{n} \int_0^t \left(\frac{\xi_{ij}}{p_n} - 1\right)  e^{(t-s)L_\psi} \partial_\theta \left[\delta_{\theta^{i,n}_s} ( J*\delta_{\theta^{j,n}_s})\right] \dd s.
	\end{equation}
	Then
	\begin{enumerate}
		\item $g^n \in \cC^0 ([0,\infty), H_{-1})$. In particular, for every $h\in H_1$ and $t \geq 0$
		\begin{equation}
		\label{g:equiv}
		g^n_t (h) = - \frac 1{n^2} \sum_{i,j=1}^{n} \int_0^t \left(\frac{\xi_{ij}}{p_n} - 1\right) J(\theta^{i,n}_s- \theta^{j,n}_s)  \left[\partial_\theta e^{(t-s)L^*_\psi}  h\right] (\theta^{i,n}_s) \dd s.
		\end{equation}
		\item There exists $D>0$ such that
		\begin{equation}
		\label{g:supT}
		\norm{g^n_t}_{-1} \leq D \sqrt{t} \, \frac{\gnorm{P^{(n)}-\mathbf{1}^{(n)}}}{n^2}, \quad \text{ for all } t\geq 0.
		\end{equation}
	\end{enumerate}
\end{lemma}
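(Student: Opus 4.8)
The plan is to prove the two items in turn, item (2) being the substantive one.

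For item (1) I would first verify that, for fixed $i,j$, the integrand $s\mapsto e^{(t-s)L_\psi}\partial_\theta\big[\delta_{\theta^{i,n}_s}(J*\delta_{\theta^{j,n}_s})\big]$ is a Bochner-integrable $H_{-1}$-valued function on $[0,t]$. A Dirac mass on $\bbT$ lies in $H_{-1}$, with $H_{-1}$-norm independent of its location by translation invariance, and $|J|\le K$, so $\delta_{\theta^{i,n}_s}(J*\delta_{\theta^{j,n}_s})$ has $H_{-1}$-norm bounded by a constant uniformly in $s$; the smoothing bound $\|e^{(t-s)L_\psi}\partial_\theta v\|_{-1}\le C(t-s)^{-1/2}\|v\|_{-1}$ already used in \eqref{b:contHeat} then makes the integrand $O((t-s)^{-1/2})$, hence integrable, so $g^n_t\in H_{-1}$ is well defined; its continuity in $t$ follows from strong continuity of $e^{tL_\psi}$ and dominated convergence, exactly as for the term \eqref{eq:stocInt} in the proof of Proposition \ref{p:mildL}. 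For the representation \eqref{g:equiv} I would test $g^n_t$ against $h\in\cC^2_0$, which is dense in $H_1$: by the adjoint relation $\langle e^{\tau L_\psi}v,h\rangle_{-1,1}=\langle v,e^{\tau L^*_\psi}h\rangle$ and one integration by parts — legitimate since $e^{(t-s)L^*_\psi}h\in\cC^1$ by the regularization of $e^{\tau L^*_\psi}$ (Proposition \ref{p:linSem}) — each summand contributes $-J(\theta^{i,n}_s-\theta^{j,n}_s)\,[\partial_\theta e^{(t-s)L^*_\psi}h](\theta^{i,n}_s)$; summing, integrating over $s$, and extending by density to $h\in H_1$ yields \eqref{g:equiv}.

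The crux of item (2) is that the Kuramoto kernel factorizes through $\bbR^2$. Writing $J(\cdot)=-K\sin(\cdot)$ and setting $c_i:=[\partial_\theta e^{(t-s)L^*_\psi}h](\theta^{i,n}_s)$, one has
\[
c_i\,J(\theta^{i,n}_s-\theta^{j,n}_s)=\langle S_i,T_j\rangle_{\bbR^2},\qquad S_i:=Kc_i\big(-\sin\theta^{i,n}_s,\ \cos\theta^{i,n}_s\big),\quad T_j:=\big(\cos\theta^{j,n}_s,\ \sin\theta^{j,n}_s\big),
\]
with $\|T_j\|_{\bbR^2}=1$ and $\|S_i\|_{\bbR^2}=K|c_i|\le K\|\partial_\theta e^{(t-s)L^*_\psi}h\|_\infty$. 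For $\|h\|_1\le1$ I would insert this factorization into \eqref{g:equiv} and, for each fixed $s$, apply Grothendieck's Inequality (Theorem \ref{thm:gro}) to the matrix $a_{ij}:=(\xi_{ij}/p_n-1)/\gnorm{P^{(n)}-\mathbf 1^{(n)}}$, which satisfies $\sum_{i,j}a_{ij}s_it_j\le1$ by the very definition of $\gnorm{\cdot}$, with the unit-ball vectors $S_i/\max_k\|S_k\|_{\bbR^2}$ and $T_j$ of $H=\bbR^2$; together with the sign trick $S_i\mapsto-S_i$ this gives
\[
\Big|\sum_{i,j=1}^n\Big(\tfrac{\xi_{ij}}{p_n}-1\Big)J(\theta^{i,n}_s-\theta^{j,n}_s)\,c_i\Big|\ \le\ K_R\,K\,\|\partial_\theta e^{(t-s)L^*_\psi}h\|_\infty\,\gnorm{P^{(n)}-\mathbf 1^{(n)}}.
\]
Integrating over $s\in[0,t]$, using $\|\partial_\theta e^{(t-s)L^*_\psi}h\|_\infty\le C(t-s)^{-1/2}\|h\|_1$ (parabolic gain of one derivative, $e^{\tau L^*_\psi}\colon H_1\to H_2$ with norm $O(\tau^{-1/2})$, composed with the Sobolev embedding $H_1\hookrightarrow\cC^0$, Proposition \ref{p:linSem}) and $\int_0^t(t-s)^{-1/2}\dd s=2\sqrt t$, and then taking the supremum over $\|h\|_1\le1$, I obtain \eqref{g:supT} with $D=2CKK_R$.

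The only genuinely delicate point is the pointwise estimate $\|\partial_\theta e^{(t-s)L^*_\psi}h\|_\infty\le C(t-s)^{-1/2}\|h\|_1$: because $L^*_\psi$ is not translation invariant, $\partial_\theta$ does not commute with $e^{\tau L^*_\psi}$, so one must extract from Proposition \ref{p:linSem} the full parabolic smoothing for the perturbed operator and not merely for $\tfrac12\partial_\theta^2$. Everything else — the Bochner-integrability, the integration by parts, and the normalization bookkeeping in Grothendieck's inequality — is routine once the $\sin$-factorization is available; in particular, all the bounds obtained this way are uniform in the phase $\psi$, as noted in the text.
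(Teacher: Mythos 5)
Your argument is correct and reaches the same bound, but the key step (2) is run differently from the paper. The paper applies Grothendieck's inequality in the infinite--dimensional Hilbert space $H=H_{-1}$, taking $S_i=\delta_{\theta^{i,n}_s}$ and $T_j=\frac{\sqrt{t-s}}{C}\,(J*\delta_{\theta^{j,n}_s})\,\partial_\theta e^{(t-s)L^*_\psi}h/\norm{h}_1$ in \eqref{g:L2sym}, so the only analytic input is the $H_{-2}\to H_{-1}$ (dually $H_1\to H_2$) smoothing of Proposition \ref{p:linSem}, at the cost of handling Dirac masses and products inside negative Sobolev norms. You instead exploit the specific form $J=-K\sin$ to factor $c_i\,J(\theta^{i,n}_s-\theta^{j,n}_s)=\langle S_i,T_j\rangle_{\R^2}$ and apply Theorem \ref{thm:gro} with $H=\R^2$; this is cleaner on the probabilistic/graph side (indeed, with a rank--two factorization Grothendieck is not even needed: splitting into the two coordinates and using the definition of $\gnorm{\cdot}$ on each gives the same bound with constant $2$ in place of $K_R$), but it costs you the supremum-norm smoothing estimate $\norm{\partial_\theta e^{\tau L^*_\psi}h}_\infty\leq C\tau^{-1/2}\norm{h}_1$, which you correctly identify as the delicate point and which does follow from Proposition \ref{p:linSem} combined with the Sobolev embedding $H_2\hookrightarrow\cC^1$ on $\bbT$ --- though, strictly, that proposition gives the prefactor $C(1+\tau^{-1/2})$, so for the ``for all $t\geq0$'' phrasing of \eqref{g:supT} your route (like the paper's own, which also drops the $+1$) really yields $D(\sqrt t+t)$; this is immaterial for the bounded intervals $[0,T]$ on which the lemma is used. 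Your treatment of item (1) (Bochner integrability of the $O((t-s)^{-1/2})$ integrand, duality plus density to get \eqref{g:equiv}) is essentially the paper's argument, with the paper merely making the approximation of the Dirac masses by mollifiers explicit; note only that $\norm{\delta_\theta}_{-1}$ is a fixed constant slightly larger than $1$, so the unit-ball normalizations in either Grothendieck application should carry that constant along, as you do via $\max_k\norm{S_k}_{\R^2}$.
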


\begin{proof}
	Fix $n$ large. Consider $\{\phi_l\}_{l \geq 1} \subset \cC^\infty$ such that $\phi_l \geq 0,\; \phi_l (\theta) = 0$ for $\theta \in [1/l, 2\pi - 1/l], \; \int \phi_l =1$ for every $l \geq 1$ and $\lim_{l\to \infty} \int F \phi_l = F(0)$ for every $F \in \cC^0$. For $i=1,\dots, n$, define
	\begin{equation}
	\phi^{i}_{s,l} := \phi_l * \delta_{\theta^{i,n}_s}.
	\end{equation}
	We start by establishing \eqref{g:equiv}. For each $h\in \cC^2$
\begin{equation}
	\begin{split}
	& \langle \frac 1{n^2} \sum_{i,j=1}^{n} \left(\frac{\xi_{ij}}{p_n} - 1\right)  \phi^{i}_{s,l} ( J*\delta_{\theta^{j,n}_s}), \partial_\theta e^{(t-s)L^*_\psi} h \rangle_{-1,1} = \\
	& = \left( \frac 1{n^2} \sum_{i,j=1}^{n} \left(\frac{\xi_{ij}}{p_n} - 1\right) \phi^{i}_{s,l} ( J*\delta_{\theta^{j,n}_s}), \partial_\theta e^{(t-s)L^*_\psi} h \right) = \\
	& = - \left( \frac 1{n^2} \sum_{i,j=1}^{n} \left(\frac{\xi_{ij}}{p_n} - 1\right) e^{(t-s)L_\psi} \partial_\theta \left[ \phi^{i}_{s,l} ( J*\delta_{\theta^{j,n}_s})\right],   h \right) = \\
	& = - \langle \frac 1{n^2} \sum_{i,j=1}^{n} \left(\frac{\xi_{ij}}{p_n} - 1\right) e^{(t-s)L_\psi} \partial_\theta \left[ \phi^{i}_{s,l} ( J*\delta_{\theta^{j,n}_s})\right],   h \rangle_{-1,1}
	\end{split}
\end{equation}
	But $\frac 1{n^2} \sum_{i,j=1}^{n} \left(\frac{\xi_{ij}}{p_n} - 1\right)  \phi^{i}_{s,l} ( J*\delta_{\theta^{j,n}_s})$ converges to $\frac 1{n^2} \sum_{i,j=1}^{n} \left(\frac{\xi_{ij}}{p_n} - 1\right) \delta_{\theta^{i,n}_s} ( J*\delta_{\theta^{j,n}_s})$ since
	\begin{equation}
	\norm{\frac 1{n^2} \sum_{i,j=1}^{n} \left(\frac{\xi_{ij}}{p_n} - 1\right)  \left(\phi^{i}_{s,l} - \delta_{\theta^{i,n}_s} \right) ( J*\delta_{\theta^{j,n}_s})}_{-1} \leq \frac 1{p_n} \sup_{i=1,\dots,n} \norm{\phi^{i}_{s,l} - \delta_{\theta^{i,n}_s}}_{-1},
	\end{equation}
	which tends to zero as $l$ tends to infinity.
	
	Thanks to the properties of the semigroup, the same holds true for 
	\begin{equation}
		\frac 1{n^2} \sum_{i,j=1}^{n} \left(\frac{\xi_{ij}}{p_n} - 1\right) e^{(t-s)L_\psi} \partial_\theta \left[ \left(\phi^{i}_{s,l} - \delta_{\theta^{i,n}_s} \right) ( J*\delta_{\theta^{j,n}_s}) \right];
	\end{equation}
	indeed, by Proposition \ref{p:linSem}, 
	\begin{equation}
	\begin{split}
	\norm{\frac 1{n^2} \sum_{i,j=1}^{n} \left(\frac{\xi_{ij}}{p_n} - 1\right) e^{(t-s)L_\psi} \partial_\theta \left[ \left(\phi^{i}_{s,l} - \delta_{\theta^{i,n}_s} \right) ( J*\delta_{\theta^{j,n}_s}) \right]}_{-1} \leq \\
	\leq \frac{C}{p_n \sqrt{t-s}} \sup_{i=1,\dots,n} \norm{\phi^{i}_{s,l} - \delta_{\theta^{i,n}_s}}_{-1}.
	\end{split}
	\end{equation}
	A similar argument shows that
	\begin{equation}
	\norm{\frac 1{n^2} \sum_{i,j=1}^{n} \left(\frac{\xi_{ij}}{p_n} - 1\right) e^{(t-s)L_\psi} \partial_\theta \left[\delta_{\theta^{i,n}_s} ( J*\delta_{\theta^{j,n}_s}) \right]}_{-1} \leq  \frac{C}{p_n \sqrt{t-s}},
	\end{equation}
	which, in turn, implies that
	\begin{equation}
	\frac 1{n^2} \sum_{i,j=1}^{n} \int_0^t \left(\frac{\xi_{ij}}{p_n} - 1\right)  e^{(t-s)L_\psi} \partial_\theta \left[\delta_{\theta^{i,n}_s} ( J*\delta_{\theta^{j,n}_s})\right] \dd s
	\end{equation}
	is almost surely finite and continuous with respect to $t$. We deduce \eqref{g:equiv}.
	
	\medskip
	
	For the second part \eqref{g:supT}, observe that
	\begin{equation}
	\begin{split}
	\label{g:L2sym}
	\langle \frac 1{n^2} \sum_{i,j=1}^{n} \left(\frac{\xi_{ij}}{p_n} - 1\right) e^{(t-s)L_\psi} \partial_\theta \left[ \delta_{\theta^{i,n}_s} ( J*\delta_{\theta^{j,n}_s}) \right],   h \rangle_{-1,1} = \\
	= - \frac 1{n^2} \sum_{i,j=1}^{n} \left(\frac{\xi_{ij}}{p_n} - 1\right)   \langle \delta_{\theta^{i,n}_s},   ( J*\delta_{\theta^{j,n}_s}) \partial_\theta e^{(t-s)L^*_\psi} h \rangle_{-1,1}.
	\end{split}
	\end{equation}
	We claim that this last term can be controlled by $\gnorm{P^{(n)}- \mathbf{1}^{(n)}}$ through Grothendieck's inequality. By choosing $H=H_{-1}$ and
	\begin{equation}
	\begin{split}
	&a_{ij} = \left(\tfrac{\xi_{ij}}{p_n} - 1\right), \\
	&S_i = \delta_{\theta^{i,n}_s}, \\
	&T_j = \frac{\sqrt{t-s}}{C}\left(J*\delta_{\theta^{j,n}_s}\right) \partial_\theta e^{(t-s) L^*_\psi} \, \frac h{\;\norm{h}_1},
	\end{split}
	\end{equation}
	Theorem \ref{thm:gro} allows us to bound the expression in \eqref{g:L2sym} by 
	\begin{equation}
	K_R \frac{C}{\sqrt{t-s}} \norm{h}_1  \; \frac{\gnorm{P^{(n)}-\mathbf{1}^{(n)}}}{n^2}.
	\end{equation}
	This shows that
	\begin{equation}
	\norm{g^n_t}_{-1} \leq K_R C \frac{\gnorm{P^{(n)}-\mathbf{1}^{(n)}}}{n^2} \int_0^t \frac 1{\sqrt{t-s}}  \dd s \; = D \sqrt{t} \frac{\gnorm{P^{(n)}-\mathbf{1}^{(n)}}}{n^2} \; ,
	\end{equation}
	where $D :=  K_R C / 2 > 0$.	The proof is concluded.
\end{proof}

\medskip

We now turn to the stochastic term $z^n_\cdot$ in \eqref{eq:lMild2}. Recall that $L^*_\psi$ is diagonal in the basis $\{f^\psi_l\}_{l\geq 0}$ of $H_{1,q_\psi}$, with eigenvalues denoted by $\{\lambda_l\}_{\lambda\geq 0}$, see Proposition \ref{p:eigFun}. We precisely analyze $z^n_\cdot$ through its coefficients in the orthonormal basis given by $L^*_\psi$.

\medskip

\begin{lemma}[Wellposedness and bounds on $z^n_t$]
	\label{lem:z^n_t}
	For $n\in\N$ and $t > 0$, let $z^n_t$ be defined by
	\begin{equation}
	z^n_t = \sum_{l\geq 1}  \langle z^n_t, e^\psi_l \rangle_{H_{-1,1/q_\psi}} \; e^\psi_l.
	\end{equation}
	Then
	\begin{enumerate}
		\item $z^n_\cdot \in \cC^0 ([0,\infty), H_{-1})$ almost surely. In particular, for every $h \in H_1$
		\begin{equation}
		z^n_t (h) = \frac 1n \sum_{j=1}^n \int_0^t \left[\partial_\theta e^{(t-s)L^*_\psi} h\right]  (\theta^{j,n}_s) \dd B^j_s.
		\end{equation}
		\item For every $T>0$, there exists a constant $Z=Z(T) > 0$, such that for $n$ large enough it holds that
		\begin{equation}
		\label{z:longTime}
		\forall \eta >0, \quad \bP \left( \sup_{t\in[0,T]} \norm{z^n_t}_{-1} > \eta \right) \leq \exp \left\{-Z n \eta^2 \right\}.
		\end{equation}
	\end{enumerate}
\end{lemma}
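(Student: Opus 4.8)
The plan is to treat part~(1) as the direct analogue of part~(1) of Lemma~\ref{lem:g^n_t}, and to concentrate the effort on the maximal exponential bound~\eqref{z:longTime} in part~(2), where self-normalized martingales enter. For part~(1): wellposedness, the identity $z^n_t(h)=\frac1n\sum_j\int_0^t[\partial_\theta e^{(t-s)L^*_\psi}h](\theta^{j,n}_s)\dd B^j_s$ for $h\in H_1$, and the membership $z^n_\cdot\in\cC^0([0,\infty),H_{-1})$ follow by the same mollification and semigroup-continuity scheme as in Lemma~\ref{lem:g^n_t}. By Proposition~\ref{p:linSem}, $\norm{\partial_\theta e^{rL^*_\psi}h}_\infty\leq Cr^{-\gamma}\norm{h}_1$ with $\gamma<\tfrac12$, so these Itô integrals have finite variance; one then checks that the coordinates $c^l_t:=\langle z^n_t,e^\psi_l\rangle_{H_{-1,1/q_\psi}}$ satisfy $c^l_t=\frac1n\sum_j\int_0^t e^{-\lambda_l(t-s)}[\partial_\theta f^\psi_l](\theta^{j,n}_s)\dd B^j_s$ (using $L^*_\psi f^\psi_l=-\lambda_l f^\psi_l$ and the biorthogonality of $\{e^\psi_l\}$ and $\{f^\psi_l\}$ from Proposition~\ref{p:eigFun}), that $\sum_{l\geq1}c^l_t e^\psi_l$ converges in $L^2(\bP;H_{-1})$ because $\bE[(c^l_t)^2]\leq C_t/\lambda_l$ and $\sum_l\lambda_l^{-1}<\infty$, and that time-continuity holds by bounding $\bE[\norm{z^n_t-z^n_{t'}}_{-1}^{2p}]\leq C|t-t'|^{\alpha p}$ (semigroup smoothing, Burkholder--Davis--Gundy, Minkowski in $\ell^2$, $\alpha<\tfrac12$) and invoking Kolmogorov's criterion.

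For part~(2) the structural point is that each coordinate is an Ornstein--Uhlenbeck-type process, $\dd c^l_t=-\lambda_l c^l_t\dd t+\dd\mathcal M^\psi_l(t)$, driven by the (correlated) martingales $\mathcal M^\psi_l(t)=\frac1n\sum_j\int_0^t[\partial_\theta f^\psi_l](\theta^{j,n}_s)\dd B^j_s$ with $\dd\langle\mathcal M^\psi_l,\mathcal M^\psi_m\rangle_s=\tfrac1n\langle\mu^n_s,(\partial_\theta f^\psi_l)(\partial_\theta f^\psi_m)\rangle\dd s$. Using the spectral gap $\lambda_l\geq\lambda_1>0$ one first gets $\bE[(c^l_t)^2]\leq C/(n\lambda_l)$ and hence $\bE[\norm{z^n_t}_{-1}^2]\leq C/n$, the typical order. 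To upgrade this to an exponential-in-$n$ bound uniform over $[0,T]$ I would write $\norm{z^n_t}_{-1}^2\asymp\sum_{l\geq1}(c^l_t)^2$ and split at a level $L=L(n,\eta,T)$. The high part $\sum_{l>L}(c^l_t)^2$ is the squared norm of an Ornstein--Uhlenbeck process whose generator has gap $\lambda_{L+1}$, so a maximal inequality of the type of Corollary~\ref{cor:maxIne} (the same semigroup-splitting technique, applied near $q_\psi$) gives $\bE[\sup_{t\in[0,T]}\sum_{l>L}(c^l_t)^2]\leq C\log(1+T)/(nL)$, which Markov's inequality turns into $\leq\tfrac12 e^{-Zn\eta^2}$ once $L$ is large enough. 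For each of the finitely many low modes $l\leq L$ I would apply the self-normalized martingale inequality of \cite{cf:dlPKL04} to $\mathcal M^\psi_l$ — self-normalizing by its own quadratic variation removes the need for a deterministic a~priori bound on the random rate $\langle\mu^n_s,(\partial_\theta f^\psi_l)^2\rangle$ — which, after a time-discretization at scale $\lambda_l^{-1}$ together with control of the oscillation on those intervals, yields $\bP(\sup_{t\in[0,T]}|c^l_t|>\eta\sqrt{a_l})\leq C\lambda_l T\exp\bigl(-c\,n\eta^2 a_l\lambda_l/\norm{\partial_\theta f^\psi_l}_\infty^2\bigr)$. Choosing $a_l\propto\norm{\partial_\theta f^\psi_l}_\infty^2/\lambda_l$, normalized so that $\sum_{l\leq L}a_l\leq1$ (legitimate since $\sum_l\norm{\partial_\theta f^\psi_l}_\infty^2/\lambda_l<\infty$ by Proposition~\ref{p:eigFun}), makes the exponent $-c'n\eta^2$ independent of $l$, so summing over $l\leq L$ costs only a polynomial factor $CTL\lambda_L$; taking $Z=Z(T)$ small and $n$ large absorbs it, and adding the two contributions gives the bound~\eqref{z:longTime}.

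The difficulty, concentrated in part~(2), is to obtain an estimate that is simultaneously uniform in $t\in[0,T]$, exponentially small in $n$, and summable over the infinitely many modes. A naive per-mode maximal bound carries a union-bound factor $\asymp\lambda_l T$ (from discretizing time at that mode's own scale), which is not summable in $l$; this is what forces the low/high splitting, with the high modes handled in bulk by an Ornstein--Uhlenbeck maximal inequality and the low ones individually by the self-normalized inequality — the latter being essential because the quadratic variations of the $\mathcal M^\psi_l$ are random and only accessible through the eigenfunction estimates of Proposition~\ref{p:eigFun}. Calibrating the cut-off $L$ against the constant $Z=Z(T)$ so that both contributions fit under $\exp(-Zn\eta^2)$ is the technical heart of the argument.
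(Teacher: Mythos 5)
Your plan can probably be pushed through, but it takes a genuinely different and substantially heavier route than the paper, and the rationale you give for the extra machinery misidentifies where the difficulty lies. The paper neither discretizes time nor splits the modes at an $n$-dependent cutoff: for each $l\geq 1$ it writes $z^n_t(f^\psi_l)=\tfrac{c\,e^{-t\lambda_l}}{\sqrt{2\lambda_l n}}A_t$ with $\langle A\rangle_t\leq e^{2t\lambda_l}-1$ (using the uniform bound $c=\sup_l\norm{\partial_\theta f^\psi_l}_\infty$ from Proposition \ref{p:eigFun}), and applies the self-normalized maximal inequality (Theorem \ref{thm:dlPKL04}) after dominating the random normalizer $\langle A\rangle_t\log\log(\langle A\rangle_t\vee e^2)$ by the deterministic envelope $4e^{2t\lambda_l}\log(2+2T\lambda_l)$. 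This gives a per-mode bound over the whole of $[0,T]$ with a prefactor independent of $l$ and $T$ and an exponent proportional to $\lambda_l/\log(2+2T\lambda_l)$, hence \emph{growing} in $l$; a single union bound with the fixed weights $S(1+l)^{-4/3}$, combined with $\lambda_l=\Theta(l^2)$, then sums over all modes at once (the zero mode, a plain martingale, is treated separately by Doob). So your premise that a per-mode maximal bound necessarily carries a union-bound factor of order $\lambda_l T$, is therefore not summable in $l$, and forces a low/high splitting, is not what happens: the summability issue you split to avoid is already resolved by the $l$-dependence of the exponent, with no Markov step and no exponentially large cutoff.

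Your alternative architecture can be made to close, but only with care you have not supplied: Markov applied to the first-moment bound on the high modes decays only like $1/(nL\eta^2)$, so your cutoff $L$ must be taken of order $e^{Zn\eta^2}$, and the ``polynomial factor $CTL\lambda_L$'' from the low modes is then itself exponential in $n\eta^2$; the final absorption works only after verifying that your per-mode constant is uniform in $L$ (it is, since $\sum_l\norm{\partial_\theta f^\psi_l}_\infty^2/\lambda_l<\infty$) and choosing $Z$ a sufficiently small fraction of it. Moreover, the discretized per-mode estimate with oscillation control, and the Ornstein--Uhlenbeck maximal inequality in bulk around $q_\psi$ (Corollary \ref{cor:maxIne} is proved only in the Fourier/$K=0$ setting, though the Lemma \ref{lem:PG} computation does transfer via Proposition \ref{p:eigFun}), are asserted rather than proved, and your blanket use of $\lambda_l\geq\lambda_1>0$ silently drops the $l=0$ direction, which has no decay and needs its own (easy) argument as in the paper. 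For part (1), your Kolmogorov-criterion route is a reasonable substitute for the paper's, which instead deduces almost sure continuity from the finiteness of $\bE\bigl[\sum_l\sup_{t\in[0,T]}|z^n_t(f^\psi_l)|^2\bigr]$ obtained in part (2).
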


\begin{proof} We start by observing that the definition of $z^n_t$ in the basis of $H_{-1,1/q_\psi}$, coincides with the one give in the mild formulation. For $h = \sum_{l\geq 0} \langle h, f^\psi_l \rangle_{H_{1,q_\psi}} f^\psi_l$, one obtains
	\begin{equation}
	\begin{split}
		\langle z^n_t, h \rangle_{-1,1} &= \sum_{l\geq 0}  \langle z^n_t, e^\psi_l \rangle_{H_{-1,1/q_\psi}} \, \langle h, f^\psi_l \rangle_{H_{1,q_\psi}} =\\
		& = \sum_{l\geq 0} z^n_t (f^\psi_l) \langle h, f^\psi_l \rangle_{H_{1,q_\psi}} = z^n_t (h),
	\end{split}
	\end{equation}
where we have used the properties of $e^\psi_l$ and $f^\psi_l$, see Proposition \ref{p:eigFun}.

	Before proving (1), we prove (2) and this will imply the existence of a continuous version of $z^n_\cdot$ almost surely.
	
	Concerning (2), we start by observing that
	\begin{equation}
		\norm{z^n_t}_{-1, 1/q_\psi} = \sum_{l\geq 0} \left| z^n_t(f^\psi_l) \right|^2.
	\end{equation}
	Let $l \geq 1$ and consider $z^n_t(f^\psi_l)$, by the definition of $f^\psi_l$ and the properties of the semigroup, one gets
	\begin{equation}
		z^n_t(f^\psi_l) =  \frac 1n \sum_{j=1}^n \int_0^t  \left[ \partial_\theta e^{(t-s)L^*_\psi} f^\psi_l \right] (\theta^{j,n}_s) \dd B^j_s = \frac 1n \sum_{j=1}^n \int_0^t e^{-(t-s)\lambda_l} \left[ \partial_\theta f^\psi_l \right] (\theta^{j,n}_s) \dd B^j_s.
	\end{equation}
	Set $c = \sup_{l\geq 0} \left|\partial_\theta f^\psi_l\right| < \infty$, see Proposition \ref{p:eigFun}. We rewrite the last expression as
	\begin{equation}
	\label{z:A_mar}
		z^n_t(f^\psi_l) = \frac{c e^{-t\lambda_l}}{\sqrt{2 \lambda_l n}} \,  A_t,
	\end{equation}
	where $A_t$ is a continuous martingale given by
	\begin{equation}
		A_t = \sqrt{\frac{2\lambda_l}{c^2 n}} \sum_{j=1}^n \int_0^t e^{s \lambda_l} \left[\partial_\theta f^\psi_l \right] (\theta^{j,n}_s) \dd B^j_s 
	\end{equation}
	and quadratic variation bounded by
	\begin{equation}
	\label{z:A_quadVar}
		\langle A \rangle_t = \frac{2\lambda_l}{c^2 n} \sum_{j=1}^n \int_0^t e^{2s\lambda_l} \left[\partial_\theta f^\psi_l \right]^2 (\theta^{j,n}_s) \dd s \leq 2\lambda_l \int_0^t e^{2\lambda_l s} \dd s \leq e^{2t\lambda_l} -1.
	\end{equation}
	From \eqref{z:A_mar} and \eqref{z:A_quadVar}, one deduces that $z^n_t (f^\psi_l)$ is a self normalized process. For estimating $\bP \left( \sup_{t \in [0,T]} \left| z^n_t(f^\psi_l)\right|^2 > \eta \right)$, we can this use the following result
	\begin{theorem}[{\cite[Theorem 4.1 and the following remark]{cf:dlPKL04}}]
		\label{thm:dlPKL04}
		Let $T>0$, $\alpha \in (0, \frac 12)$ and $\left(A_t\right)_{t \in [0,T]}$ be a martingale with $A_0=0$. There exists $C>0$, depending only on $\alpha$, such that
		\begin{equation}
			\bE \left[ \sup_{t\in[0,T]} \exp \left\{ \frac{ \alpha A^2_t}{\langle A \rangle_t \log \log \left(\langle A \rangle_t \vee e^2\right)} \right\}\right] \leq C.
		\end{equation}		
	\end{theorem}
	
	By standard computations, we obtain
	\begin{equation}
	\begin{split}
		&\bP \left( \sup_{t \in [0,T]} \left| z^n_t(f^\psi_l)\right|^2 > \eta \right) = \bP \left( \sup_{t \in [0,T]} e^{-2t \lambda_l} A^2_t > \frac{2 \lambda_l n}{c^2} \eta \right) = \\
		&= \bP \left( \sup_{t \in [0,T]} \frac{A^2_t}{4 e^{2t \lambda_l} \log (2 + 2T \lambda_l)} > \frac{\lambda_l n}{2c^2 \log(2+2T\lambda_l)} \eta  \right) \leq\\
		& \leq \bE \left[ \sup_{t \in [0,T]} \exp \left\{  \frac{A^2_t}{4 e^{2t \lambda_l} \log (2 + 2T \lambda_l)}\right\} \right] \exp \left\{- \frac{\lambda_l n}{2c^2 \log(2+2T\lambda_l)} \eta\right\}.
	\end{split}
	\end{equation}
	We now use the fact that
	\begin{equation}
		\alpha := \sup_{t\in[0,T]} \frac{\langle A \rangle _t \log \log (\langle A \rangle_t \vee e^2)}{4 e^{2t \lambda_l} \log (2 + 2T \lambda_l)} \leq \frac 14, \quad \text{for all } l \geq 1,
	\end{equation}
	and, by Theorem \ref{thm:dlPKL04}, we obtain that there exists $C>0$, independent of $T,n$ and $l$, such that
	\begin{equation}
	\label{z:expIn_l}
		\bP \left( \sup_{t \in [0,T]} \left| z^n_t(f^\psi_l)\right|^2 > \eta \right) \leq C \exp \left\{ - \frac{\lambda_l n}{c^2 \log(2+2T\lambda_l)} \eta  \right\}.
	\end{equation}
	
	The case $l=0$ is somehow easier since
	\begin{equation}
		z^n_t(f^\psi_0) = \frac 1n \sum_{j=1}^n \int_0^t \left[ \partial_\theta f^\psi_0 \right] (\theta^{j,n}_s) \dd B^j_s
	\end{equation}
	is a standard martingale with bounded quadratic variation: one can use Theorem \ref{thm:dlPKL04} or, more simply, exponential estimates and Doob's inequality to obtain that there exists $c_0 >0$ (depending on $T$) such that for all $\eta >0$
	\begin{equation}
	\label{z:expIn_0}
		\quad \bP \left(\sup_{t \in[0,T]} \left|z^n_t(f^\psi_0)\right|^2 > \eta \right) \leq c_0 \exp\{-c_0 n \eta\}.
	\end{equation}
	
	\medskip
	
	The last part of the proof consists in exploiting the exponential inequalities \eqref{z:expIn_l} and \eqref{z:expIn_0}, and to transfer them to $\sup_{t \in [0,T]} \norm{z^n_t}_{-1}^2$. For this purpose, let $S>0$ be defined by
	\begin{equation}
		S := \left(\sum_{l\geq 0} \frac 1{(1+l)^{4/3}}\right)^{-1}.
	\end{equation}
	For some $\eta >0 $, it holds that
	\begin{equation}
	\begin{split}
		\bP  &\left(\sup_{t\in[0,T]} \norm{z^n_t}_{-1,1/q_\psi} > \eta \right) \leq \bP \left( \sum_{l\geq 0} \sup_{t\in[0,T]} \left| z^n_t (f^\psi_l)\right|^2 > \eta^2  \right) \leq \\
		&\leq \sum_{l\geq 0} \bP \left( \sup_{t\in[0,T]} \left| z^n_t (f^\psi_l)\right|^2 > \frac{S}{(1+l)^{4/3}} \eta^2  \right) \leq  \\
		&\leq c_0 \exp \{-c_0 S \, n \eta^2\} + C \sum_{l\geq 1} \exp \left\{ - \frac{S}{c^2} \, \frac{\lambda_l}{\log(2+2T\lambda_l) (1+l)^{4/3}} \, n \eta^2 \right\}.
	\end{split}
	\end{equation}
	Now we use the fact that $\lambda_l = \Theta(l^2)$ as $l$ tends to infinity, see Proposition \ref{p:eigFun}. In particular, there exists $L>0$, depending on $T$, such that
	\begin{equation}
	\begin{split}
		\bP  \left(\sup_{t\in[0,T]} \norm{z^n_t}_{-1,1/q_\psi} > \eta \right)&  \leq  c_0 \exp \{-c_0 S \, n \eta^2\} +\\
		 + C L \exp & \left\{ - \frac{S}{c^2 \log(2+2T\lambda_L)} n \eta^2 \right\}  + C \sum_{l > L} \exp \left\{ - \frac S{c^2} \sqrt{l} \,  n \eta^2  \right\}.
	\end{split}
	\end{equation}
	Observing that $\int_1^\infty e^{-\sqrt{x}n} \dd x = \frac{2(n+1)}{n^2} e^{-n}$, taking $n$ large enough and $Z$ an suitable constant depending on $c_0, S, L$ and $C$, the proof of (2) is concluded.
	
	\medskip
	
	Back to (1), observe that for $s,t \in [0,T]$ and for some $k\geq 1$
	\begin{equation}
	\label{z:cont}
	\norm{z^n_t - z^n_s}_{-1}^2 \leq \sum_{l=0}^k \left| z^n_t (f^\psi_l) - z^n_s (f^\psi_l )\right|^2 + 2  \sum_{l>k} \sup_{t \in [0,T]} \left|z^n_t (f^\psi_l)\right|^2.
	\end{equation}
	The first term can be make small by using the continuity of $z^n_t (e_l)$; for the second one, observe that we have just proven that $\bE \left[ \sum_{l\geq 1} \sup_{t \in [0,T]} \left|z^n_t (e_l)\right|^2 \right] < \infty$. This implies that there exists a subsequence $\{k_m\}_{m\in \N}$ such that $\sum_{l> k_m} \sup_{t \in [0,T]} \left|z^n_t (e_l)\right|^2$ tends to 0 almost surely as $m$ tends to infinity. The almost sure continuity in \eqref{z:cont} is then established by choosing $s$ and $t$ close enough and $k$ large enough.
	
\end{proof}

\subsection{Proof of Theorem \ref{thm:sup}} Recall that $\mu^n_0$ converges in $H_{-1}$ to $q_\psi \in M$. Next lemma assures that the projection of $\mu^n_0$ on $M$ is well defined for $n$ big enough.

\medskip

\begin{lemma}{\cite[Lemma 2.8]{cf:LP}}
	There exists $\sigma>0$ such that for all $\mu \in H_{-1}$ such that $\textup{dist} (\mu, M)\leq \sigma$, there exists a unique phase $\psi:= \textup{proj}_M (\mu) \in \bbT$ such that $P^0_\psi (\mu - q_\psi)=0$ and the mapping $\mu \mapsto \textup{proj}_M(\mu)$ is $\cC^\infty$. 
\end{lemma}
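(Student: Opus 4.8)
The natural strategy is the classical \emph{tubular neighbourhood} argument: rewrite the relation $P^0_\psi(\mu-q_\psi)=0$ as a single scalar equation and apply the implicit function theorem in the Banach space $H_{-1}$. Recall from Proposition~\ref{p:eigFun} that $f^\psi_0\in H_1$ is the dual eigenfunction of $L^*_\psi$ for the eigenvalue $0$ and that $N_\psi$ is the kernel of the linear functional $v\mapsto\langle v,f^\psi_0\rangle_{-1,1}$. Normalising so that $\langle\partial_\theta q_\psi,f^\psi_0\rangle_{-1,1}=1$, one has
\begin{equation}
P^0_\psi(\mu-q_\psi)=0 \iff G(\psi,\mu):=\langle \mu-q_\psi,\,f^\psi_0\rangle_{-1,1}=0 .
\end{equation}
The map $G\colon\bbT\times H_{-1}\to\bbR$ is affine (hence smooth) in $\mu$, and it is $C^\infty$ in $\psi$: indeed $\psi\mapsto q_\psi=q(\cdot-\psi)$ and $\psi\mapsto f^\psi_0$ are $C^\infty$ as maps into $H_{-1}$ and into $H_1$ respectively, since $q\in\cC^\infty(\bbT)$ and since, by the rotation symmetry, $f^\psi_0(\cdot)=f^0_0(\cdot+\psi)$ (compare the analogous formula for $e^\psi_l$).

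\textbf{Local inversion.} Next I would verify the non-degeneracy needed for the implicit function theorem. Fix $\psi_0\in\bbT$; then $G(\psi_0,q_{\psi_0})=0$, and when differentiating $\psi\mapsto G(\psi,q_{\psi_0})=\langle q_{\psi_0}-q_\psi,f^\psi_0\rangle_{-1,1}$ at $\psi=\psi_0$ the contribution of the $\psi$-dependence of $f^\psi_0$ is multiplied by the vanishing vector $q_{\psi_0}-q_{\psi_0}$, so
\begin{equation}
\partial_\psi G(\psi_0,q_{\psi_0})=\big\langle -\partial_\psi q_\psi\big|_{\psi=\psi_0},\,f^{\psi_0}_0\big\rangle_{-1,1}=\langle\partial_\theta q_{\psi_0},f^{\psi_0}_0\rangle_{-1,1}=1\neq 0 .
\end{equation}
By the implicit function theorem, for every $\psi_0$ there exist radii $r_{\psi_0},\rho_{\psi_0}>0$ and a $\cC^\infty$ map $\textup{proj}_{\psi_0}\colon B_{H_{-1}}(q_{\psi_0},r_{\psi_0})\to(\psi_0-\rho_{\psi_0},\psi_0+\rho_{\psi_0})$ such that, for $\mu$ in that ball, $\textup{proj}_{\psi_0}(\mu)$ is the \emph{unique} zero of $G(\cdot,\mu)$ in that interval, with $\textup{proj}_{\psi_0}(q_{\psi_0})=\psi_0$.

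\textbf{Globalisation and uniqueness.} Since $M=\{q_\psi:\psi\in\bbT\}$ is the continuous image of the compact set $\bbT$, it is compact in $H_{-1}$, so finitely many of the balls $B_{H_{-1}}(q_\psi,r_\psi/2)$ cover it; let $r>0$ be a uniform lower bound for the corresponding radii and pick $\sigma\in(0,r/2)$ so small that $\{\mu:\textup{dist}(\mu,M)\leq\sigma\}\subset\bigcup_\psi B_{H_{-1}}(q_\psi,r)$. On this tube the local maps $\textup{proj}_{\psi_0}$ agree (mod $2\pi$) on overlaps by their local uniqueness, hence patch into a single $\cC^\infty$ map $\mu\mapsto\textup{proj}_M(\mu)$. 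The genuinely delicate point — and the step I expect to cost the most — is the \emph{uniqueness of the phase over all of $\bbT$}: writing $\mu-q_\psi=(\mu-q_{\bar\psi})+(q_{\bar\psi}-q_\psi)$ with $q_{\bar\psi}$ a nearest point of $M$ to $\mu$, any solution $\psi$ of $P^0_\psi(\mu-q_\psi)=0$ satisfies $|G(\psi,q_{\bar\psi})|=|\langle P^0_\psi(\mu-q_{\bar\psi}),f^\psi_0\rangle_{-1,1}|\leq C\sigma$, and $\psi\mapsto G(\psi,q_{\bar\psi})$ depends only on $\psi-\bar\psi$ and has a simple zero at $0$ by the transversality just computed; after shrinking $\sigma$ this forces $\psi$ to lie within $O(\sigma)$ of $\bar\psi$, where the local uniqueness of the previous step identifies it with $\textup{proj}_M(\mu)$. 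Equivalently, one may simply \emph{define} $\textup{proj}_M(\mu)$ as the phase of the nearest point of $M$ to $\mu$, which is single-valued on a $\sigma$-tube because the compact $\cC^\infty$ manifold $M\subset H_{-1}$ has positive reach, and which necessarily solves the critical-point equation $P^0_\psi(\mu-q_\psi)=0$; smoothness of $\textup{proj}_M$ is then exactly what the implicit function theorem provided.
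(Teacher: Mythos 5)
The paper does not actually prove this statement (it is imported verbatim from \cite[Lemma 2.8]{cf:LP}), so I can only compare your attempt with the standard argument, which is precisely your implicit-function-theorem reduction. Your local part is correct and is the intended route: reducing $P^0_\psi(\mu-q_\psi)=0$ to the scalar equation $G(\psi,\mu)=\langle\mu-q_\psi,f^\psi_0\rangle_{-1,1}=0$, the transversality computation $\partial_\psi G(\psi_0,q_{\psi_0})=\langle\partial_\theta q_{\psi_0},f^{\psi_0}_0\rangle_{-1,1}=1$, the local $\cC^\infty$ solution map, and the patching over the compact manifold $M$ are all fine.

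The genuine gap is your treatment of uniqueness of the phase \emph{over all of} $\bbT$. The argument ``$|G(\psi,q_{\bar\psi})|\leq C\sigma$ together with a simple zero of $\psi\mapsto G(\psi,q_{\bar\psi})$ at $\bar\psi$ forces $\psi=\bar\psi+O(\sigma)$'' is invalid: a simple zero at $\bar\psi$ gives no control on zeros of this function elsewhere on the circle, and such zeros do exist. Concretely, take $\mu=q_{\bar\psi}$ (so $\textup{dist}(\mu,M)=0$) and $\psi=\bar\psi+\pi$. Since $q$ is even, the functions $q_{\bar\psi}$, $q_{\bar\psi+\pi}$ and the weight $1/q_{\bar\psi+\pi}$ are all symmetric under the reflection $\theta\mapsto 2\bar\psi-\theta$, while $\partial_\theta q_{\bar\psi+\pi}$ is antisymmetric; hence $\langle q_{\bar\psi}-q_{\bar\psi+\pi},\partial_\theta q_{\bar\psi+\pi}\rangle_{H_{-1,1/q_{\bar\psi+\pi}}}=0$, i.e.\ $P^0_{\bar\psi+\pi}(\mu-q_{\bar\psi+\pi})=0$ as well (recall $P^0_\psi$ is the orthogonal projection onto $\textup{Span}(\partial_\theta q_\psi)$ in the weighted space $H_{-1,1/q_\psi}$). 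So the equation $P^0_\psi(\mu-q_\psi)=0$ never has a unique solution in all of $\bbT$; the uniqueness can only be local, i.e.\ among phases $\psi$ with $\norm{\mu-q_\psi}_{-1}$ below a fixed small threshold (equivalently, $\psi$ in a fixed small interval around the nearest-point phase). That restricted statement is exactly what your IFT-plus-patching step delivers and is all the paper ever uses (it applies the lemma to $\mu^n_0$ already close to a prescribed $q_\psi$), but your write-up claims, and needs to claim, more than your argument can give; you should state and prove the local version.

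A second, related flaw is the closing ``equivalently'' shortcut. The first-order condition satisfied by the nearest point of $M$ in the metric $\textup{dist}$, which is built from the unweighted norm $\norm{\cdot}_{-1}$, is $\langle\mu-q_\psi,\partial_\theta q_\psi\rangle_{H_{-1}}=0$, whereas $P^0_\psi(\mu-q_\psi)=0$ is orthogonality in the weighted space $H_{-1,1/q_\psi}$. These are different conditions, so the metric projection need not solve the spectral-projection equation, and one cannot define $\textup{proj}_M$ as the nearest-point phase and claim it ``necessarily solves'' $P^0_\psi(\mu-q_\psi)=0$. The IFT construction with the weighted (spectral) projection is not optional here; it is the proof.
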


\medskip

Let $\psi_n = \textup{proj}_M(\mu^n_0)$. Fix $\varepsilon >0$, we place ourselves in 
\begin{equation}
\label{eq:A^n_1}
A^n_1 = \{ \norm{\mu^n_0 - q_{\psi_n}}_{-1} \leq \varepsilon/2 \}.
\end{equation}
Denote by $d^n_t$ the distance between $\mu^n_t$ and $M$, i.e.
\begin{equation}
	d^n_t := \textup{dist} (\mu^n_t, M).
\end{equation}
We want to prove that $\mu^n_t$ stays close to $M$ for long times. Let $T>0$, $N \in \N$ and define
\begin{equation}
T_i = iT, \quad \text{ for } i=0,\dots,N.
\end{equation}
If, for $N = N_n = o(\exp(n))$, we show that
\begin{equation}
	\lim_{n\to \infty } \bP \left(\sup_{t \in [0,T_{N_n}]} d^n_t \leq \varepsilon \right) =1,
\end{equation}
then we are done. For sake of notation, we just employ $N$.

\medskip

For $0 \leq a < b < \infty$, define the events
\begin{equation}
	E^n (a,b) = \left\{ \max \left\{2 d^n_a , \, 2d^n_b, \, \sup_{t \in (a,b)} d^n_t \right\} \leq \varepsilon \right\},
\end{equation}
clearly
\begin{equation}
	\bP \left(\sup_{t \in [0,T_N]} d^n_t \leq \varepsilon \right) \geq \bP \left( E^n(0,T_N)\right).
\end{equation}
The Markov property of system \eqref{eq:k} implies that
\begin{equation}
\begin{split}
	\bP \left( E^n(0,T_N) \right) \geq \, & \bP \left( E^n(0,T_N) \big| E^n(0,T_{N-1}) \right) \bP\left( E^n(0,T_{N-1}) \right) = \\
	=\, &\bP\left( E^n(0,T) \right) \bP\left( E^n(0,T_{N-1}) \right) \geq \, \bP \left( E^n(0,T) \right)^N.
\end{split}
\end{equation}

Let's then focus on the bounded interval of time $[0,T]$ and consider $\nu^n_\cdot= \mu^n_\cdot - q_{\psi_n}$, which satisfies the stochastic partial differential equation \eqref{eq:lMild2}. Taking the norm in $H_{-1}$ on both sides of \eqref{eq:lMild2} and using the properties of the semigroup together with the fact that (e.g. \cite[Lemma 7.3]{cf:BGP14})
\begin{equation}
\label{eq:estCon}
	\norm{\partial_\theta (\mu (J*\nu))}_{-2} \leq C \norm{\mu}_{-1} \norm{\nu}_{-1}, \quad \text{ for all } \mu, \nu \in H_{-1},
\end{equation}
one is left with (with a new constant $C$)
\begin{equation}
\label{eq:normSup}
	\norm{\nu^n_t}_{-1} \leq \norm{e^{tL_{\psi^n}}\nu^n_0}_{-1} + \int_0^t \frac C{\sqrt{t-s}} \norm{\nu^n_s}^2_{-1} \dd s + \norm{g^n_t}_{-1} + \norm{z^n_t}_{-1}.
\end{equation}
By taking $\varepsilon$ small enough, one can apply a Gronwall-type inequality (similar to Lemma \ref{lem:apriori}) that leads to (recall \eqref{eq:A^n_1} and the fact that the semigroup is continuous)
\begin{equation}
\sup_{t\in[0,T]} \norm{\nu^n_t}_{-1} \leq \frac 23 \varepsilon + \sup_{t\in[0,T]} \norm{g^n_t}_{-1} + \sup_{t\in[0,T]} \norm{z^n_t}_{-1}.
\end{equation}
For $\eta>0$, define $A^n_2 (\eta) = \{ \sup_{t\in[0,T]} \norm{z^n_t}_{-1} \leq \eta \}$. If $n$ is large enough, Lemma \ref{lem:g^n_t} assures that $\sup_{t\in[0,T]} \norm{g^n_t}_{-1}$ is arbitrarily small a.s., and, placing ourselves in $A^n_2 (\varepsilon/10)$, one obtains
\begin{equation}
\sup_{t\in[0,T]} \norm{\nu^n_t}_{-1} \leq \frac 23 \varepsilon + \frac 1{5}\varepsilon \leq \varepsilon.
\end{equation}
Plugging this estimate in \eqref{eq:normSup} for $t=T$, observing that $P^0_{\psi^n} \nu^n_0 =0$ by construction so that $\norm{e^{tL_{\psi^n}} \nu^n_0}_{-1} \leq C e^{-\lambda_1 t /2} \norm{\nu^n_0}_{-1}$ (e.g. \cite[Proposition B.6]{cf:LP}), one obtains
\begin{equation}
\norm{\nu^n_T}_{-1} \leq e^{-\lambda_1 T/2} \frac \varepsilon 2 + \varepsilon \left(\varepsilon \int_0^T \frac C{\sqrt{T-s}} \dd s \right) + \frac \varepsilon 5,
\end{equation}
choosing $T$ and $\varepsilon$ such that
\begin{equation}
\begin{split}
	& T \geq \frac 2 {\lambda_1}\log (5), \\
	& \varepsilon \leq \frac 1{20 C \sqrt{T}},
\end{split}
\end{equation}
one finally gets
\begin{equation}
	\norm{\nu^n_T}_{-1} \leq \frac \varepsilon 2.
\end{equation}
Since $d^n_t \leq \norm{\nu^n_t}_{-1}$ for all $t\geq 0$, we have then proven that $A^n_2 (\varepsilon/10) \subset E^n (0,T)$. In particular,
\begin{equation}
	\bP \left(\sup_{t \in [0,T_N]} d^n_t \leq \varepsilon \right) \geq \bP \left( E^n(0,T)\right)^N \geq \left(1- \bP \left( A^n_2(\epsilon/10)^\complement \right)\right)^N.
\end{equation}
We can than use the estimate \eqref{z:longTime} in Lemma \ref{lem:z^n_t} to get
\begin{equation}
	\bP \left( A^n_2(\epsilon/10)^\complement \right) = \bP \left( \sup_{t\in[0,T]} \norm{z^n_t}_{-1} > \frac{\varepsilon}{10} \right) \leq \exp \left\{ - \frac Z{100} n \varepsilon^2 \right\}.
\end{equation}

Putting all together, one is left with
\begin{equation}
\begin{split}
	\bP \left( \sup_{t \in [0,T_N]} d^n_t \leq \varepsilon \right) \geq \left(1- \bP \left( A^n_2(\epsilon/10)^\complement \right) \right)^N \geq \left(1 - \exp \left\{ - \frac Z{100} n \varepsilon^2 \right\} \right)^N \\
	= \exp \left\{ N \log \left[1 - \exp \left( - \frac Z{100} n \varepsilon^2 \right) \right] \right\} \geq \exp\left\{ -\frac 32 N \exp \left( - \frac Z{100} n \varepsilon^2 \right) \right\},
	\label{eq:finEst}
\end{split}
\end{equation}
where we have used that $\log(1-x) \geq - 3/2 x$ for $0 \leq x \leq 1/2$. But the right hand side of \eqref{eq:finEst} tends to 1 for all $N=N_n= o(\exp (n))$ and the proof is concluded.

\bigskip

\section{Longtime behavior around $1/2\pi$}
\label{s:longTimePi}
In this section we will suppose that the finite time behavior is already known, so that for $n$ large enough, $\mu^n_t$ is very close to $\mu_t$; thus, for a large $T_0$, $\mu_{T_0}$ will be very close to $1/2\pi$ and so will be for $\mu^n_{T_0}$. At the end of the day, we may suppose that we are starting close to $1/2\pi$. Since we are not assuming any independence between initial conditions and graph, instead of proving Theorem \ref{thm:main}, we rather prove the following Proposition.

\begin{proposition}
	\label{p:longTimes}
	If for every $\varepsilon_0 > 0$
	\begin{equation}
	\lim_{n\to \infty} \bP \left(\norm{\mu^n_0 - \tfrac 1{2\pi}}_{-1} \leq \varepsilon_0 \right) = 1.
	\end{equation}
	Then, there exists $A>0$ such that for every positive increasing sequence $\{T_n\}_{n\in\N}$ such that $T_n = \exp(o(n))$ and for all $0 < \varepsilon < A$, it holds
	\begin{equation}
	\lim_{n\to \infty} \bP \left(\sup_{t\in [0, T_n]} \norm{\mu^n_t - \tfrac 1{2\pi}}_{-1} \leq \varepsilon \right) = 1.
	\end{equation}	
\end{proposition}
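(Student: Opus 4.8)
The plan is to run the same three-step scheme as in Section~\ref{s:longTimeM}, but with the manifold $M$ and the family $L_\psi$ replaced by the single incoherent state $1/(2\pi)$ and its linearization, exploiting that in the subcritical regime this linearization has a genuine spectral gap. Write $\nu^n_t:=\mu^n_t-\tfrac1{2\pi}$ and let $L$ denote the operator obtained by linearizing the right-hand side of \eqref{eq:PDE} at $1/(2\pi)$; since $J*\tfrac1{2\pi}=0$ one gets $Lu=\tfrac12\partial_\theta^2 u-\tfrac1{2\pi}\partial_\theta(J*u)$, which is diagonal in the Fourier basis with eigenvalue $(K-1)/2$ on the first modes and $-k^2/2$ on mode $k$ for $|k|\geq 2$. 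Hence for $0\leq K<1$ the spectrum of $L$ on $H_{-1}$ lies in $(-\infty,-\gamma]$ with $\gamma:=(1-K)/2>0$, and the semigroup it generates is exponentially contracting: $\norm{e^{tL}u}_{-1}\leq e^{-\gamma t}\norm{u}_{-1}$. First I would reproduce, word for word, the argument of Proposition~\ref{p:mildL} (Itô's formula applied to $F_s=e^{(t-s)L^*}h$) to obtain the mild formulation
\[
\nu^n_t = e^{tL}\nu^n_0 - \int_0^t e^{(t-s)L}\partial_\theta\!\left[\nu^n_s(J*\nu^n_s)\right]\dd s - g^n_t + z^n_t,
\]
where $g^n_t$ and $z^n_t$ are the obvious analogues of the quantities in Proposition~\ref{p:mildL}, built from $L$ and $L^*$ instead of $L_\psi$ and $L^*_\psi$.

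Next I would prove the two perturbation bounds, which are the analogues of Lemmas~\ref{lem:g^n_t} and~\ref{lem:z^n_t} but now \emph{uniform in time} because of the contraction. For the graph term, Grothendieck's Inequality is applied exactly as in Lemma~\ref{lem:g^n_t}, but now keeping the exponential factor $e^{-\gamma(t-s)}$ coming from $e^{(t-s)L}$; this yields
\[
\norm{g^n_t}_{-1}\leq K_R C\,\frac{\gnorm{P^{(n)}-\mathbf{1}^{(n)}}}{n^2}\int_0^t\frac{e^{-\gamma(t-s)}}{\sqrt{t-s}}\dd s\;\leq\; D\,\frac{\gnorm{P^{(n)}-\mathbf{1}^{(n)}}}{n^2},
\]
with $D$ independent of $t$, so that by \eqref{h:graph} this is $o(1)$ uniformly on $[0,\infty)$ (this is the statement I expect to be recorded as Lemma~\ref{lem:g^n_tPi}). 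For the noise term, I would expand $z^n_t$ in the eigenbasis of $L^*$: each coefficient has the self-normalized form $\tfrac{c_l e^{-t\lambda_l}}{\sqrt{\lambda_l n}}A^l_t$ with $A^l_t$ a martingale of quadratic variation $\lesssim e^{2\lambda_l t}$, and — this is the main new input — I would establish a maximal inequality $\bE\!\left[\sup_{t\in[0,T]}\norm{z^n_t}_{-1}^2\right]\leq C\,\tfrac{\log(1+T)}{n}$ valid for all large $T$; this is precisely Corollary~\ref{cor:maxIne} in the case $K=0$, and its extension to $K\in(0,1)$, obtained by combining the self-normalized martingale estimate of \cite{cf:dlPKL04} (or the Ornstein--Uhlenbeck computation of \cite{cf:GP}) with an $\ell^1$-summable allocation of the budget over the modes $l$, using $\lambda_l=\Theta(l^2)$.

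With these two estimates, the conclusion follows from a single Gronwall/bootstrap argument, with no iteration over subintervals. Fix $\varepsilon<A$, where $A$ is chosen so that $C\varepsilon^2\int_0^\infty\tfrac{e^{-\gamma u}}{\sqrt u}\dd u\leq\varepsilon/5$; taking the $H_{-1}$ norm in the mild formulation and using the bilinear bound \eqref{eq:estCon} together with the contraction of the semigroup gives
\[
\norm{\nu^n_t}_{-1}\leq e^{-\gamma t}\norm{\nu^n_0}_{-1}+\int_0^t\frac{Ce^{-\gamma(t-s)}}{\sqrt{t-s}}\norm{\nu^n_s}_{-1}^2\dd s+\norm{g^n_t}_{-1}+\norm{z^n_t}_{-1}.
\]
Work on the event where $\norm{\nu^n_0}_{-1}\leq\varepsilon/5$, $\sup_{t\geq0}\norm{g^n_t}_{-1}\leq\varepsilon/5$ and $\sup_{t\in[0,T_n]}\norm{z^n_t}_{-1}\leq\varepsilon/5$. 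Letting $\tau:=\inf\{t:\norm{\nu^n_t}_{-1}>\varepsilon\}$, the displayed inequality shows that on $[0,\tau\wedge T_n)$ one has $\norm{\nu^n_t}_{-1}\leq 4\varepsilon/5$; by continuity of $t\mapsto\nu^n_t$ in $H_{-1}$ (from Proposition~\ref{p:mildL}) this forces $\tau>T_n$ on that event, so $\sup_{t\in[0,T_n]}\norm{\nu^n_t}_{-1}\leq\varepsilon$ there (an a priori estimate in the spirit of Lemma~\ref{lem:apriori}). It remains to check that this event has probability tending to $1$: the first piece by the hypothesis of the Proposition; the second because the deterministic bound on $\norm{g^n_t}_{-1}$ tends to $0$ by \eqref{h:graph}, so for $n$ large it is $\leq\varepsilon/5$ surely; and the third by Markov's inequality applied to the maximal inequality, $\bP(\sup_{t\in[0,T_n]}\norm{z^n_t}_{-1}>\varepsilon/5)\leq 25C\,\tfrac{\log(1+T_n)}{n\varepsilon^2}\to0$, since $T_n=\exp(o(n))$ means $\log(1+T_n)=o(n)$. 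Note that no chopping of $[0,T_n]$ and no Markov-property argument as in the proof of Theorem~\ref{thm:sup} is needed here, precisely because both perturbations are already controlled uniformly (resp. up to exponential times) and the exponential contraction absorbs the linear and nonlinear terms globally.

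The main obstacle I anticipate is the maximal inequality for $z^n_\cdot$ up to exponential times in the infinite-dimensional setting: one must sum the contributions of infinitely many Fourier modes while keeping only a $\log T$ dependence on the horizon, which requires the self-normalized martingale bound of \cite{cf:dlPKL04} applied mode by mode together with a careful summable splitting of the deviation budget, in the spirit of the final part of the proof of Lemma~\ref{lem:z^n_t}. Once that estimate is in hand, deriving the mild formulation, the uniform-in-time graph bound, and the bootstrap are essentially routine adaptations of Section~\ref{s:longTimeM}.
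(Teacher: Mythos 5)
Your proposal is correct and follows essentially the same route as the paper: the mild formulation around $1/2\pi$ (Proposition \ref{p:mildLPi}), the uniform-in-time Grothendieck bound on $g^n_t$ exploiting the factor $e^{-\gamma(t-s)}$ (Lemma \ref{lem:g^n_tPi}), a mode-by-mode maximal inequality for $z^n_t$ giving $\bE[\sup_{t\le T}\norm{z^n_t}_{-1}^2]\le C\log(1+T)/n$ followed by Markov's inequality (Lemma \ref{lem:z^n_tPi}, via the Graversen--Peskir-type estimate in Lemma \ref{lem:PG}), and a single Gronwall-type bootstrap as in Lemma \ref{lem:apriori} with no chopping of $[0,T_n]$. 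Your stopping-time formulation of the bootstrap and the suggestion to use \cite{cf:dlPKL04} instead of \cite{cf:GP} for the noise are only cosmetic variations on the paper's argument.
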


%
%

The end of the section is thus devoted to prove Proposition \ref{p:longTimes}.

\subsection{A mild formulation around $1/2\pi$}
We place ourselves aroud the stationary solution $\tfrac 1{2\pi}$. The system evolution is captured by the linear dynamics around $\frac 1{2\pi}$ and the corresponding linear operator $L_{2\pi}$ is given by
\begin{equation}
\label{def:linOpPi}
L_{2\pi} u  \; := \; \tfrac 12 \partial^2_\theta u - \tfrac{1}{2\pi} (\partial_\theta J)* u, \quad \text{ for }u \in \cC^2(\bbT), \; \int_{\bbT} u(\theta) \dd \theta = 0.
\end{equation}
The adjoint $L^*_{2\pi}$ of $L_{2\pi}$ in $\cL^2_0$ has the following expression
\begin{equation}
\label{def:linOpPi*}
L^*_{2\pi} u  \; = \; \tfrac 12 \partial^2_\theta u - \tfrac{1}{2\pi} J*(\partial_\theta u),
\end{equation}
and domain $D(L^*_{2\pi})=D(L_{2\pi})$. These operators are diagonal in the Fourier basis $\{e_l\}_{l\geq1}$, with eigenvalues denoted by $\{\lambda^{2\pi}_l\}_{l\geq 1}$. The spectrum is negative and bounded away from 0, let $\gamma_K = \lambda^{2\pi}_1 = \frac{1-K}{2}>0$ denote the spectral gap. The operator $L_{2\pi}$ (resp. $L^*_{2\pi}$) defines an analytic semigroup $e^{tL_{2\pi}}$ (resp. $e^{tL^*_{2\pi}}$) with the following property:
\begin{equation}
\label{eq:linSemEst}
	\norm{e^{tL_{2\pi}} u}_{-1} \leq C \frac{e^{-\gamma t/2}}{\sqrt{t}} \norm{u}_{-2}, \quad\text{for some } C > 0,
\end{equation}
for all $\gamma \in [0, \gamma_K)$, all $t>0$ and $u \in H_{-1}$. We will not prove \eqref{eq:linSemEst} but refer to Appendix B for similar estimates.

\medskip

Define $\nu^n_t := \mu^n_t - \frac 1{2\pi}$. As done in Section \ref{s:longTimeM}, we derive a mild formulation for $\nu^n_\cdot$. We omit the proof.

\medskip

\begin{proposition}
	\label{p:mildLPi}
	The process $\nu^n_t \in H_{-1}$ satisfies the following stochastic partial differential equation in $C\left([0,T], H_{-1} \right)$:
	\begin{equation}
	\label{eq:lMildPi2}
	\nu^n_t = e^{t L_{2\pi}}\nu^n_0 - \int_{0}^{t} e^{(t-s)L_{2\pi}} \partial_\theta \left[\nu^n_s (J*\nu^n_s)\right] \dd s - g^n_t + z^n_t,
	\end{equation}
	where
	\begin{equation}
	g^n_t = \frac 1{n^2} \sum_{i,j=1}^{n} \int_0^t \left(\frac{\xi_{ij}}{p_n} - 1\right)  e^{(t-s)L_{2\pi}} \partial_\theta \left[\delta_{\theta^{i,n}_s} ( J*\delta_{\theta^{j,n}_s})\right] \dd s,
	\end{equation}
	and $z^n_t \in H_{-1}$ is defined for $h \in H_1$ by
	\begin{equation}
	\langle z^n_t, h \rangle_{-1,1} = \frac 1n \sum_{j=1}^n \int_0^t \partial_\theta e^{(t-s)L^*_{2\pi}}  h (\theta^{j,n}_s) \dd B^j_s.
	\end{equation}
\end{proposition}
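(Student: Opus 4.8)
The plan is to reproduce, \emph{mutatis mutandis}, the proof of Proposition \ref{p:mildL}, replacing the operator $L_\psi$ by $L_{2\pi}$ and the stationary profile $q_\psi$ by the uniform density $\tfrac1{2\pi}$. There are two steps: derive a weak formulation from Itô's formula, then upgrade it to the mild identity \eqref{eq:lMildPi2} in $H_{-1}$.

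First I would fix a test function $F=F_t(\theta)\in\cC^{1,2}\left([0,\infty)\times\bbT\right)$ with $\int_\bbT F_t=0$ for all $t\geq0$, set $\nu^n_s:=\mu^n_s-\tfrac1{2\pi}$, and apply Itô's formula to $\langle\mu^n_t-\tfrac1{2\pi},F_t\rangle$. Here two elementary facts simplify the computation: since $J$ is odd, $J*\tfrac1{2\pi}\equiv0$, so the mean-field drift $J*\mu^n_s$ equals $J*\nu^n_s$; and, again since $J$ is odd, $\langle\tfrac1{2\pi},(J*\nu^n_s)\,\partial_\theta F_s\rangle=-\tfrac1{2\pi}\langle\nu^n_s,(\partial_\theta J)*F_s\rangle$ by integration by parts. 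Together with $\int_\bbT\partial^2_\theta F_s=0$, these make the generator contribution collapse to $\langle\nu^n_s,L^*_{2\pi}F_s\rangle+\langle\nu^n_s(J*\nu^n_s),\partial_\theta F_s\rangle$, so that Itô's formula takes exactly the form \eqref{eq:lMild0} with $L^*_\psi$ replaced by $L^*_{2\pi}$ and with the graph and martingale terms $G^n_t(F)$, $Z^n_t(F)$ unchanged. I would then choose $F_s=e^{(t-s)L^*_{2\pi}}h$ for $h\in\cC^2$ with $\int_\bbT h=0$; this $F$ is $\cC^{1,2}$ on $[0,t]\times\bbT$ because $e^{sL^*_{2\pi}}$ is an analytic semigroup, and since $\partial_sF_s=-L^*_{2\pi}F_s$ the term $\int_0^t\langle\nu^n_s,\partial_sF_s+L^*_{2\pi}F_s\rangle\dd s$ vanishes, leaving
\[
\langle\nu^n_t,h\rangle=\langle\nu^n_0,e^{tL^*_{2\pi}}h\rangle+\int_0^t\langle\nu^n_s(J*\nu^n_s),\partial_\theta e^{(t-s)L^*_{2\pi}}h\rangle\dd s+g^n_t(h)+z^n_t(h).
\]

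Next I would identify this weak identity with the mild equation \eqref{eq:lMildPi2} in $H_{-1}$, exactly as in the second half of the proof of Proposition \ref{p:mildL}: approximate $\nu^n_0$, resp.\ $\nu^n_s$, in $H_{-1}$ by elements of $\cL^2_0$; use the $\cL^2_0$-duality of $e^{tL_{2\pi}}$ and $e^{tL^*_{2\pi}}$ to get $\langle\nu^n_0,e^{tL^*_{2\pi}}h\rangle_{-1,1}=\langle e^{tL_{2\pi}}\nu^n_0,h\rangle_{-1,1}$; and use the smoothing bound $\norm{e^{(t-s)L_{2\pi}}\partial_\theta u}_{-1}\leq\tfrac{C}{\sqrt{t-s}}\norm{\partial_\theta u}_{-2}=\tfrac{C}{\sqrt{t-s}}\norm{u}_{-1}$ — a consequence of \eqref{eq:linSemEst} — to move the spatial derivative onto the semigroup and obtain $\langle\nu^n_s(J*\nu^n_s),\partial_\theta e^{(t-s)L^*_{2\pi}}h\rangle=-\langle e^{(t-s)L_{2\pi}}\partial_\theta[\nu^n_s(J*\nu^n_s)],h\rangle_{-1,1}$. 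By the bilinear estimate \eqref{eq:estCon} and the boundedness of probability measures in $H_{-1}$, the integrand $e^{(t-s)L_{2\pi}}\partial_\theta[\nu^n_s(J*\nu^n_s)]$ has $H_{-1}$-norm at most $C(t-s)^{-1/2}$, hence is Bochner integrable on $[0,t]$ and yields a continuous $H_{-1}$-valued path. Finally, wellposedness and continuity of $g^n_\cdot$ and $z^n_\cdot$, together with the stated alternative expression for $g^n_t(h)$, come from the analogues of Lemma \ref{lem:g^n_t} and Lemma \ref{lem:z^n_t} for the operator $L_{2\pi}$ (namely Lemma \ref{lem:g^n_tPi} and its noise counterpart); once these hold for every $h\in H_1$, the weak identity extends from $\cC^2$ to all of $H_1$ and \eqref{eq:lMildPi2} follows in $\cC^0([0,T],H_{-1})$.

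The main obstacle is the functional-analytic bookkeeping in this last step: justifying the passages to the limit that transfer $\partial_\theta$ from the test function onto the semigroup, and the Bochner integrability of the nonlinear term as $s\uparrow t$. None of this is new — it is identical to the corresponding passages in the proof of Proposition \ref{p:mildL} — and the exponential decay in \eqref{eq:linSemEst}, which is the feature distinguishing $L_{2\pi}$ from $L_\psi$, plays no role in the mild formulation itself and is needed only for the later time-uniform estimates.
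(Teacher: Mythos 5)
Your proposal is correct and follows exactly the route the paper intends: the paper omits this proof, stating it is done as for Proposition \ref{p:mildL}, and your adaptation — using $J*\tfrac1{2\pi}\equiv 0$ and the integration by parts that turns the contribution of the uniform density into the convolution term of $L^*_{2\pi}$, then choosing $F_s=e^{(t-s)L^*_{2\pi}}h$ and transferring the weak identity to the mild $H_{-1}$ formulation via duality, the smoothing bound, and Bochner integrability — is precisely that argument. The bookkeeping steps you flag (wellposedness of $g^n_\cdot$, $z^n_\cdot$ via the analogues of Lemmas \ref{lem:g^n_t} and \ref{lem:z^n_t}) are handled in the paper the same way, so no gap remains.
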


\subsection{Control on the perturbations} Contrary to the supercritical case, the operator $L_{2\pi}$ is contracting along all direction or, in other words, all its eigenvalues are negative. This property gives a stronger control on $g^n_\cdot$ and $z^n_\cdot$, as shown in the next Lemmas.

\medskip

\begin{lemma}[Wellposedness and bounds on $g^n_t$]
	\label{lem:g^n_tPi}
	For $n\in\N$ and $t\geq 0$, let $g^n_t$ be given by
	\begin{equation}
	g^n_t = \frac 1{n^2} \sum_{i,j=1}^{n} \int_0^t \left(\frac{\xi_{ij}}{p_n} - 1\right)  e^{(t-s)L_{2\pi}} \partial_\theta \left[\delta_{\theta^{i,n}_s} ( J*\delta_{\theta^{j,n}_s})\right] \dd s.
	\end{equation}
	Then
	\begin{enumerate}
		\item $g^n \in \cC^0 ([0,\infty), H_{-1})$. In particular, for every $h\in H_1$ and $t \geq 0$
		\begin{equation}
		\label{g:equivPi}
		g^n_t (h) = - \frac 1{n^2} \sum_{i,j=1}^{n} \int_0^t \left(\frac{\xi_{ij}}{p_n} - 1\right) J(\theta^{i,n}_s- \theta^{j,n}_s)  \partial_\theta e^{(t-s)L^*_{2\pi}}  h (\theta^{i,n}_s) \dd s.
		\end{equation}
		\item There exists $D>0$, independent of $t$, such that
		\begin{equation}
		\label{g:supTPi}
		\norm{g^n_t}_{-1} \leq D \, \frac{\gnorm{P^{(n)}-\mathbf{1}^{(n)}}}{n^2}, \quad \text{ for all } t\geq 0.
		\end{equation}
	\end{enumerate}
\end{lemma}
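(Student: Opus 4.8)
The plan is to repeat, essentially verbatim, the two-step proof of Lemma~\ref{lem:g^n_t}, with $L_\psi$ replaced by $L_{2\pi}$; the only—but decisive—difference is that the heat-kernel estimate $\norm{e^{(t-s)L_\psi}\partial_\theta\,\omega}_{-1}\le C(t-s)^{-1/2}\norm{\omega}_{-1}$ used there is upgraded, thanks to \eqref{eq:linSemEst} applied to $u=\partial_\theta\omega$ together with $\norm{\partial_\theta\omega}_{-2}=\norm{\omega}_{-1}$, to the contracting bound
\begin{equation}
\norm{e^{(t-s)L_{2\pi}}\partial_\theta\,\omega}_{-1}\;\le\;\frac{C\,e^{-\gamma(t-s)/2}}{\sqrt{t-s}}\,\norm{\omega}_{-1},\qquad\omega\in H_{-1},\ 0\le s<t,
\end{equation}
valid for any fixed $\gamma\in(0,\gamma_K)$, the exponential factor being exactly the dividend of the spectral gap $\gamma_K=\lambda^{2\pi}_1>0$ of $L_{2\pi}$.

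For part~(1) I would reproduce the mollification argument of Lemma~\ref{lem:g^n_t} word for word: setting $\phi^i_{s,l}:=\phi_l*\delta_{\theta^{i,n}_s}$, using the $\cL^2_0$-duality of $e^{(t-s)L_{2\pi}}$ and $e^{(t-s)L^*_{2\pi}}$ together with integration by parts in $\theta$, the contracting bound above controls the relevant differences by $\tfrac{C}{p_n\sqrt{t-s}}\sup_i\norm{\phi^i_{s,l}-\delta_{\theta^{i,n}_s}}_{-1}$, which tends to $0$ as $l\to\infty$; since $(t-s)^{-1/2}$ is integrable near $s=t$, this gives that $g^n_\cdot$ is a well-defined, continuous $H_{-1}$-valued process and that the duality identity \eqref{g:equivPi} holds.

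For part~(2) I would mirror \eqref{g:L2sym}: writing, for $h\in\cC^2$,
\begin{equation}
\langle g^n_t,h\rangle_{-1,1}=-\frac1{n^2}\sum_{i,j=1}^{n}\Bigl(\tfrac{\xi_{ij}}{p_n}-1\Bigr)\int_0^t\bigl\langle\delta_{\theta^{i,n}_s},\,(J*\delta_{\theta^{j,n}_s})\,\partial_\theta e^{(t-s)L^*_{2\pi}}h\bigr\rangle_{-1,1}\dd s,
\end{equation}
I would apply, for each fixed $s$, Grothendieck's inequality (Theorem~\ref{thm:gro}) in $H=H_{-1}$ with
\begin{equation}
a_{ij}=\frac{1}{\gnorm{P^{(n)}-\mathbf{1}^{(n)}}}\Bigl(\tfrac{\xi_{ij}}{p_n}-1\Bigr),\qquad S_i=\delta_{\theta^{i,n}_s},\qquad T_j=\frac{\sqrt{t-s}}{C\,e^{-\gamma(t-s)/2}}\,(J*\delta_{\theta^{j,n}_s})\,\partial_\theta e^{(t-s)L^*_{2\pi}}\frac{h}{\norm{h}_1},
\end{equation}
where \eqref{eq:linSemEst} guarantees that $T_j$ lies in the unit ball of $H_{-1}$, while $S_i$ does so up to an absolute constant absorbed into $C$ (recall $\delta_{\theta^{i,n}_s}\in H_{-1}$ with norm independent of the point). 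This bounds the $s$-integrand by $K_R\,C\,e^{-\gamma(t-s)/2}(t-s)^{-1/2}\norm{h}_1\,\gnorm{P^{(n)}-\mathbf{1}^{(n)}}/n^2$, and then the subcritical regime pays off at the final integration, since
\begin{equation}
\int_0^t\frac{e^{-\gamma(t-s)/2}}{\sqrt{t-s}}\dd s=\int_0^t\frac{e^{-\gamma u/2}}{\sqrt u}\dd u\;\le\;\int_0^\infty\frac{e^{-\gamma u/2}}{\sqrt u}\dd u=\sqrt{\tfrac{2\pi}{\gamma}}<\infty
\end{equation}
is finite uniformly in $t$, in contrast with the $\sqrt t$ in \eqref{g:supT}. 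Taking the supremum over $h$ in the unit ball of $H_1$ then yields $\norm{g^n_t}_{-1}\le D\,\gnorm{P^{(n)}-\mathbf{1}^{(n)}}/n^2$ with $D:=K_R C\sqrt{2\pi/\gamma}$ independent of $t$, which is \eqref{g:supTPi}.

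I do not expect a genuine obstacle here: the entire mechanism is that the strict contractivity of $L_{2\pi}$ (all its eigenvalues are $\le-\gamma_K<0$) turns the merely integrable kernel $(t-s)^{-1/2}$ into one integrable over all of $[0,\infty)$, so that the graph fluctuation $g^n_\cdot$ is controlled uniformly in time rather than with a $\sqrt t$ loss. The only steps requiring a little care—both already settled in the proof of Lemma~\ref{lem:g^n_t}—are applying \eqref{eq:linSemEst} on the correct Sobolev scale (from $H_{-2}$ to $H_{-1}$ after the $\partial_\theta$, so that the singular factor is exactly the integrable $(t-s)^{-1/2}$) and recording that $\delta_{\theta^{i,n}_s}\in H_{-1}$.
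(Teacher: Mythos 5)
Your proposal is correct and follows essentially the same route as the paper: the paper's proof of this lemma also only rewrites the pairing as in \eqref{g:L2sym}, applies Grothendieck's inequality with $a_{ij}=(\tfrac{\xi_{ij}}{p_n}-1)$, $S_i=\delta_{\theta^{i,n}_s}$ and the exponentially weighted $T_j$, and then uses the uniform-in-$t$ integrability of $e^{-\gamma(t-s)}(t-s)^{-1/2}$, leaving part (1) to the mollification argument of the supercritical case exactly as you do. Your explicit normalization of $a_{ij}$ by $\gnorm{P^{(n)}-\mathbf{1}^{(n)}}$ and the remark on the $H_{-1}$-norm of the Dirac masses are harmless refinements of the same argument.
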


\begin{proof}
	We only prove (2). Observe that, as in \eqref{g:L2sym},
	\begin{equation}
	\begin{split}
	\label{g:L2symPi}
	\langle \frac 1{n^2} \sum_{i,j=1}^{n} \left(\frac{\xi_{ij}}{p_n} - 1\right) e^{(t-s)L_{2\pi}} \partial_\theta \left[ \delta_{\theta^{i,n}_s} ( J*\delta_{\theta^{j,n}_s}) \right],   h \rangle_{-1,1} = \\
	= - \frac 1{n^2} \sum_{i,j=1}^{n} \left(\frac{\xi_{ij}}{p_n} - 1\right)   \langle \delta_{\theta^{i,n}_s},   ( J*\delta_{\theta^{j,n}_s}) \partial_\theta e^{(t-s)L^*_{2\pi}} h \rangle_{-1,1}.
	\end{split}
	\end{equation}
	Applying Theorem \ref{thm:gro}, this time with
	\begin{equation}
	\begin{split}
	&a_{ij} = \left(\tfrac{\xi_{ij}}{p_n} - 1\right), \\
	&S_i = \delta_{\theta^{i,n}_s}, \\
	&T_j = \frac{\sqrt{t-s}}{Ce^{-\gamma(t-s)}}\left(J*\delta_{\theta^{j,n}_s}\right) \partial_\theta e^{(t-s) L^*_{2\pi}} \, \frac h{\;\norm{h}_1},
	\end{split}
	\end{equation}
	allows us to bound the expression in \eqref{g:L2symPi} by 
	\begin{equation}
	K_R \frac{C e^{-\gamma(t-s)}}{\sqrt{t-s}} \norm{h}_1  \; \frac{\gnorm{P^{(n)}-\mathbf{1}^{(n)}}}{n^2}.
	\end{equation}
	This shows that
	\begin{equation}
	\norm{g^n_t}_{-1} \leq K_R C \frac{\gnorm{P^{(n)}-\mathbf{1}^{(n)}}}{n^2} \int_0^t \frac{e^{-\gamma(t-s)}}{\sqrt{t-s}}  \dd s \; \leq D \frac{\gnorm{P^{(n)}-\mathbf{1}^{(n)}}}{n^2} \; ,
	\end{equation}
	where $D :=  K_R C \int_0^\infty  \frac{e^{-\gamma s}}{\sqrt{s}} \dd s > 0$ since the integral converges.	The proof is concluded.
\end{proof}

\medskip

We now turn to the stochastic term $z^n_t$ in \eqref{eq:lMildPi2}. Recall that $L_{2\pi}$ is diagonal in the Fourier basis $\{e_l\}_{l\geq 1}$ of $H_{-1}$, with eigenvalues denoted by $\lambda^{2\pi}_l$. Then

\medskip

\begin{lemma}[Wellposedness and bounds on $z^n_t$]
	\label{lem:z^n_tPi}
	For $n\in\N$ and $t > 0$, let $z^n_t$ be defined by
	\begin{equation}
	z^n_t = \sum_{l\geq 1} \langle z^n_t, e_l\rangle_{H_{-1}} \; e_l,
	\end{equation}
	where
	\begin{equation}
	\langle z^n_t, e_l\rangle_{H_{-1}} = z^n_t \left(\frac {e^{il \cdot}}l\right) = \frac in \sum_{j=1}^n \int_0^t e^{(t-s)\lambda^{2\pi}_l} e^{il \theta^{j,n}_s} \dd B^j_s.
	\end{equation}
	Then
	\begin{enumerate}
		\item $z^n \in \cC^0 ([0,\infty), H_{-1})$ almost surely.
		\item There exists $C > 0$ independent of $n$, such that for all $T> 0$
		\begin{equation}
		\bE \left[ \sup_{t \in [0,T]} \norm{z^n_t}^2_{-1} \right] \leq C\; \frac{\log(1+2 \gamma_K T)}{n}.
		\end{equation}
		\item For every positive increasing sequence $\{T_n\}_{n\in\N}$ such that $T_n = \exp(o(n))$ and for all $\eta > 0$, it holds
		\begin{equation}
		\label{z:longTimePi}
		\lim_{n\to\infty} \bP \left( \sup_{t\in[0,T_n]} \norm{z^n_t}_{-1} \leq \eta \right) = 1.
		\end{equation}
	\end{enumerate}
\end{lemma}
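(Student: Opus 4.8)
The plan is to follow the scheme of Lemma~\ref{lem:z^n_t}, now exploiting the fact that the \emph{entire} spectrum of $L_{2\pi}$ is negative: this upgrades the exponential concentration bound \eqref{z:longTime} of the supercritical regime to a genuine $L^2$ estimate of Ornstein--Uhlenbeck type, and there is no zero mode to be treated apart. Work in the Fourier orthonormal basis $\{e_l\}_{l\geq 1}$ of $H_{-1}$, so that by Parseval $\norm{z^n_t}_{-1}^2=\sum_{l\geq 1}|c_l(t)|^2$ with $c_l(t):=\langle z^n_t,e_l\rangle_{H_{-1}}$. Writing $\lambda^{2\pi}_l>0$ for the $l$-th relaxation rate (so $\lambda^{2\pi}_1=\gamma_K$ and $\lambda^{2\pi}_l=\Theta(l^2)$), one has $c_l(t)=\frac in\sum_{j=1}^n\int_0^t e^{-(t-s)\lambda^{2\pi}_l}e^{il\theta^{j,n}_s}\dd B^j_s=e^{-t\lambda^{2\pi}_l}M_l(t)$, where $M_l(t)=\frac in\sum_{j=1}^n\int_0^t e^{s\lambda^{2\pi}_l}e^{il\theta^{j,n}_s}\dd B^j_s$ is a continuous $\bbC$-valued martingale. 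Since $|e^{il\theta}|\equiv 1$, its quadratic variation obeys $\langle M_l\rangle_t\leq\frac1{n^2}\sum_{j=1}^n\int_0^t e^{2s\lambda^{2\pi}_l}\dd s=\frac{e^{2t\lambda^{2\pi}_l}-1}{2n\lambda^{2\pi}_l}$, so that each $c_l$ is, up to the $1/n$ scaling, an Ornstein--Uhlenbeck process relaxing at rate $\lambda^{2\pi}_l$ with stationary variance of order $(n\lambda^{2\pi}_l)^{-1}$.

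The technical heart is a mode-wise maximal inequality: there exists $C>0$, independent of $l$, $n$ and $T$, such that
\begin{equation*}
\bE\Bigl[\sup_{t\in[0,T]}|c_l(t)|^2\Bigr]\;\leq\;\frac{C}{n}\,\frac{\log\bigl(1+\lambda^{2\pi}_l T\bigr)}{\lambda^{2\pi}_l}.
\end{equation*}
I would prove it by extending the Ornstein--Uhlenbeck maximal inequality of \cite{cf:GP} to this time-inhomogeneous, $n$-particle setting: reduce to real-valued martingales by treating $\operatorname{Re} M_l$ and $\operatorname{Im} M_l$ separately, perform a Dambis--Dubins--Schwarz time change using the bound above on $\langle M_l\rangle$, split $[0,T]$ into $O(\log(1+\lambda^{2\pi}_l T))$ dyadic blocks on which the relaxation factor makes $c_l$ comparable to its stationary law, and take the maximum over the blocks by a union bound against Gaussian tails; the logarithm is exactly the number of blocks.

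Granting this, the rest is quick. From $\sup_t\sum_l|c_l(t)|^2\leq\sum_l\sup_t|c_l(t)|^2$ and Tonelli,
\begin{equation*}
\bE\Bigl[\sup_{t\in[0,T]}\norm{z^n_t}_{-1}^2\Bigr]\leq\sum_{l\geq 1}\bE\Bigl[\sup_{t\in[0,T]}|c_l(t)|^2\Bigr]\leq\frac{C}{n}\sum_{l\geq 1}\frac{\log(1+\lambda^{2\pi}_l T)}{\lambda^{2\pi}_l};
\end{equation*}
since $\lambda^{2\pi}_1=\gamma_K$, the term $l=1$ is $\asymp\log(1+2\gamma_K T)$, while $\lambda^{2\pi}_l=\Theta(l^2)$ gives $\sum_{l\geq 2}\frac{\log(1+\lambda^{2\pi}_l T)}{\lambda^{2\pi}_l}\leq C'\log(1+T)\sum_{l\geq 2}l^{-2}+C''\sum_{l\geq 2}l^{-2}\log l\leq C'''\log(1+2\gamma_K T)$, proving~(2). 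Part~(3) follows by Chebyshev's inequality: $\bP\bigl(\sup_{t\in[0,T_n]}\norm{z^n_t}_{-1}>\eta\bigr)\leq\eta^{-2}\,\bE\bigl[\sup_{t\in[0,T_n]}\norm{z^n_t}_{-1}^2\bigr]\leq C\eta^{-2}\log(1+2\gamma_K T_n)/n$, which tends to $0$ since $T_n=\exp(o(n))$ forces $\log T_n=o(n)$. For~(1): each truncation $t\mapsto\sum_{l=1}^{k}c_l(t)e_l$ is continuous in $H_{-1}$ (continuous martingales times continuous scalar factors), and $\bE\bigl[\sum_{l>k}\sup_{t\in[0,T]}|c_l(t)|^2\bigr]\to 0$ as $k\to\infty$ by the mode-wise bound; hence along a subsequence the tails vanish uniformly in $t\in[0,T]$ almost surely, the truncations converge uniformly to a continuous limit on each $[0,T]$, and one gets a continuous version of $z^n_\cdot$ on $[0,\infty)$ --- exactly as at the end of the proof of Lemma~\ref{lem:z^n_t}.

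The main obstacle is the mode-wise inequality with the sharp $(\lambda^{2\pi}_l)^{-1}\log(1+\lambda^{2\pi}_l T)$ dependence and a constant uniform in $l$, $n$ and $T$: a coarser per-mode estimate --- without the $(\lambda^{2\pi}_l)^{-1}$ gain, or with an $l$-dependent constant --- would make the series over $l$ either diverge or grow polynomially rather than logarithmically in $T$. It is the $\Theta(l^2)$ growth of the eigenvalues of $L_{2\pi}$ that closes the series and yields the stated $\log(1+2\gamma_K T)$ rate.
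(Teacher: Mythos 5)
Your proposal follows essentially the same route as the paper: decompose $z^n_t$ on the Fourier modes, establish a per-mode Ornstein--Uhlenbeck-type maximal inequality of size $\log(1+\lambda^{2\pi}_l T)/(n\lambda^{2\pi}_l)$, sum over $l$ using $\lambda^{2\pi}_l=\Theta(l^2)$, deduce (3) by Chebyshev and (1) by truncating the series exactly as at the end of Lemma~\ref{lem:z^n_t}. The only real difference is how the per-mode bound is obtained: the paper derives it by extending the Graversen--Peskir maximal inequality \cite[Corollary 2.8]{cf:GP} to complex martingales (Lemma~\ref{lem:PG}), with the bracket $e^{2\lambda^{2\pi}_l t}-1$ computed exactly, rather than reproving it through the time-change and blocking argument you sketch (where, incidentally, the natural blocks have length of order $1/\lambda^{2\pi}_l$, so their number is about $\lambda^{2\pi}_l T$ and the logarithm arises as the expected maximum over that many variables with exponential tails, not from there being $\log(1+\lambda^{2\pi}_l T)$ blocks; also the brackets of $\operatorname{Re}M_l$, $\operatorname{Im}M_l$ are random and only dominated by a deterministic function, so the Dambis--Dubins--Schwarz clock is random and must be handled by domination).
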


\begin{proof}
	We only prove (2). For $l\geq 1$, let $x^l_t := \sqrt{2 \lambda^{2\pi}_l n} \, e^{\lambda^{2\pi}_l t} \left| z^n_t (e^{il\cdot}) \right|$. In particular
	\begin{equation}
	x^l_t = \left| \frac{\sqrt{2 \lambda^{2\pi}_l}}{\sqrt{n}} \sum_{j=1}^n \int_0^t e^{s\lambda^{2\pi}_l} e^{il\theta^{j,n}_s} \dd B^j_s \right| = \left|a^l_t+i\,b^l_t\right|,
	\end{equation}
	where $a^l$ and $b^l$ are two continuous real valued martingales. Let $\langle x^l \rangle_t = \langle a^l \rangle_t + \langle b^l \rangle_t$ where $\langle a^l \rangle_t$ and $\langle b^l \rangle_t$ are the quadratic variations of $a^l_t$ and $b^l_t$ respectively, then
	\begin{equation}
	\langle x^l \rangle_t = \frac{2 \lambda^{2\pi}_l}{n} \sum_{j=1}^n \int_0^t e^{2s\lambda^{2\pi}_l} (\cos^2 + \sin^2) (l\theta^{j,n}_s) \dd s = e^{2\lambda^{2\pi}_lt} -1.
	\end{equation}
	
	We now use
	\begin{lemma}
		\label{lem:PG}
		Let $Y_t = A_t + i \, B_t$, where $A_t$ and $B_t$ are continuous real valued martingales. Define $X_t = \left| Y_t \right|$ and $\langle X \rangle_t = \langle A \rangle_t + \langle B \rangle_t$, where $\langle A \rangle_t$ and $\langle B \rangle_t$ are the quadratic variations of $A$ and $B$ respectively. Then, there exists $C>0$ such that, for all $T>0$,
		\begin{equation}
		\label{eq:PG}
		\bE \left[ \sup_{t \in [0,T]} \frac{X^2_t}{1 + \langle X \rangle_t} \right] \leq C \log(1+\log(1+\langle X \rangle_t)).
		\end{equation}
	\end{lemma}
	The proof of Lemma \ref{lem:PG} is presented at the end of the section. By choosing $X_t = x^l_t$, $A_t = a^l_t$ and $B_t = b^l_t$, one obtains that, for $T>0$,
	\begin{equation}
	\bE \left[ \sup_{t \in [0,T]} \left| z^n_t (e_l) \right|^2 \right] = \frac{1}{2\lambda^{2\pi}_ln} \, \bE \left[ \sup_{t \in [0,T]} \frac{(x^l_t)^2}{1 + \langle x^l \rangle_t} \right] \leq \frac C{2\lambda^{2\pi}_l n} \log (1+ 2 \lambda^{2\pi}_l T).
	\end{equation}
	
	It remains to observe that
	\begin{equation}
	\bE \left[ \sup_{t \in [0,T]} \norm{z^n_t}^2_{-1} \right] \leq \bE \left[  \sum_{l\geq 1}  \sup_{t \in [0,T]} \left|z^n_t(e_l)\right|^2 \right] \leq C \sum_{l\geq 1} \frac 1{2\lambda^{2\pi}_l n} \log (1+ 2 \lambda^{2\pi}_l T).
	\end{equation}
	The conclusion holds by factorizing the first term of the sum and modifying the constant $C$ accordingly: observe that $\sum_{l\geq1} \sup_{T\geq 1} \tfrac {\log (1+2\lambda^{2\pi}_l T)}{\lambda^{2\pi}_l \log (1+ 2\lambda^{2\pi}_1 T)} < \infty$.

	The proof is concluded modulo Lemma \ref{lem:PG}, proven hereafter.
\end{proof}

\medskip

\begin{proof}[Proof of Lemma \ref{lem:PG}]
	Recall that $A_t$ is a martingale, in particular a slight variation of \cite[Corollary 2.8]{cf:GP} implies that there exists $D>0$ such that
	\begin{equation}
	\bE \left[ \sup_{t \in [0,T]} \frac{A^2_t}{1 + \langle A \rangle_t} \right]  \leq  D \log(1+\log(1+\langle A \rangle_t)).
	\end{equation}
	Thus, one can develop
	\begin{equation}
	\begin{split}
	\bE \left[ \sup_{t \in [0,T]} \frac{X^2_t}{1 + \langle X \rangle_t} \right]&  \leq \bE \left[ \sup_{t \in [0,T]} \frac{A^2_t}{1 + \langle A \rangle_t} \right] + \bE \left[ \sup_{t \in [0,T]} \frac{B^2_t}{1 + \langle B \rangle_t} \right] \leq \\
	\leq &D \log(1+\log(1+\langle A \rangle_t)) + D \log(1+\log(1+\langle B \rangle_t)) \leq \\
	\leq  & 2D \log(1+\log(1+\langle X \rangle_t)),
	\end{split}
	\end{equation}
	and the proof is done by taking $C=2D$.
\end{proof}

\subsection{Proof of Proposition \ref{p:longTimes}}
	Fix $\varepsilon>0$. From Proposition \ref{p:mildLPi} we know that $\nu^n_t := \mu^n_t - \tfrac 1{2\pi}$ satisfies
	\begin{equation}
	\label{eq:lmild2}
	\nu^n_t = e^{t L_{2\pi} }\nu^n_0 - \int_{0}^{t} e^{(t-s)L_{2\pi}} \partial_\theta \left[ \nu^n_s (J*\nu^n_t) \right]\dd s - g^n_t + z^n_t.
	\end{equation}
	Taking the norm and using the properties of $e^{tL_{2\pi}}$, together with the estimate \eqref{eq:estCon}, for all $0<\gamma<\gamma_K$ one obtains
	\begin{equation}
	\label{eq:lnorm}
	\norm{\nu^n_t}_{-1} \leq \norm{\nu^n_0}_{-1} + C \int_{0}^{t} \frac{e^{- \gamma (t-s)}}{\sqrt{t-s}} \norm{\nu^n_s}^2_{-1} \dd s + \norm{g^n_t}_{-1} + \norm{z^n_t}_{-1}.
	\end{equation}
	
	\medskip
	
	Thanks to the contractive properties of $L_{2\pi}$, there exists $D>0$ (Lemma \ref{lem:g^n_tPi}) such that
	\begin{equation}
	\sup_{t\geq 0} \norm{g^n_t}_{-1} < D \, \frac{\gnorm{P^{(n)} - \mathbf{1}^{(n)}}}{n^2}.
	\end{equation}
	Define now $B^n_1 (\varepsilon_0) = \{ \norm{\nu^n_0} \leq \varepsilon_0 \}$ and $B^n_2 (\eta) = \{ \sup_{t \in[0,T_n ]} \norm{z^n_t}_{-1} \leq \eta \}$. On $B^n_1 (\varepsilon/3) \cap B^n_2 (\varepsilon/4)$ and for $n$ large enough, we can apply Lemma \ref{lem:apriori} with
	\begin{equation}
	\begin{split}
	& \delta = \frac{\varepsilon}{3}, \qquad T = T_n, \\
	& f(t) = \norm{\nu^n_t}_{-1}, \\
	& g(t) = \norm{g^n_t}_{-1} + \norm{z^n_t}_{-1},
	\end{split}
	\end{equation}
	and obtain
	\begin{equation}
	\sup_{t \in [0,T_n]} \norm{\nu^n_t}_{-1} \leq \varepsilon.
	\end{equation}
	
	The proof is concluded since by hypothesis $\bP (B^n_1) \to 1$ and Lemma \ref{lem:z^n_tPi} implies that $\bP (B^n_2) \to 1$ as $n$ tends to infinity.

\section{Finite time behavior}
\label{s:finTim}
The aim of this section is to study the closeness of $\mu^n_\cdot$ to $\mu_\cdot$ on bounded time interval. 

\begin{proof}[Proof of Theorem \ref{thm:finTim}]
Fix $\varepsilon>0$ and $T>0$. It is not difficult to see that $\mu^n_\cdot - \mu_\cdot$ satisfies again a mild equation in $\cC^0 ([0,T], H_{-1})$, which is given by
\begin{equation}
\label{eq:mild2}
	\mu^n_t - \mu_t = e^{t \frac{\Delta}{2}}\left( \mu_0^n - \mu_0 \right) - \int_{0}^{t} e^{(t-s)\frac{\Delta}{2}} \partial_\theta \left[ \mu^n_s (J*\mu^n_s) - \mu_s(J*\mu_s) \right]\dd s - g^n_t + z^n_t,
\end{equation}
where
\begin{equation}
	g^n_t = \frac 1{n^2} \sum_{i,j=1}^{n} \int_0^t \left(\frac{\xi_{ij}}{p_n} - 1\right)  e^{(t-s)\frac{\Delta}2} \partial_\theta \left[\delta_{\theta^{i,n}_s} ( J*\delta_{\theta^{j,n}_s})\right] \dd s,
\end{equation}
and $z^n_t$ is denoted for $h \in H_1$ by
\begin{equation}
	z^n_t (h) = \frac 1n \sum_{j=1}^n \int_0^t \partial_\theta e^{(t-s)\frac{\Delta}2}  h (\theta^{j,n}_s) \dd B^j_s.
\end{equation}
Observe that we are using the Laplacian operator which is very similar to $L_{2\pi}$ except for the first eigenvalue that is now given by $-(1-K)/2$. We will thus use all the results about $L_{2\pi}$ and its semigroup to control $z^n_\cdot$ and $g^n_\cdot$.
\medskip

Taking the $H_{-1}$ norm in \eqref{eq:mild2} and applying \eqref{eq:estCon}, one is left with
\begin{equation}
\label{eq:norm}
\norm{\mu^n_t - \mu_t}_{-1} \leq \norm{\mu_0^n - \mu_0}_{-1} + \int_{0}^{t} \tfrac{C}{\sqrt{t-s}} \norm{\mu_s^n - \mu_s}_{-1} \dd s + \norm{g^n_t}_{-1} + \norm{z^n_t}_{-1}.
\end{equation}
The term involving the graph $g^n_t$  can be controlled again by $\gnorm{P^{(n)} - \mathbf{1}^{(n)}}$: minor modifications to Lemma \ref{lem:g^n_tPi} show that there exists $D>0$ such that
\begin{equation}
\sup_{t \in [0,T]} \norm{g^n_t}_{-1} \leq D \, \frac{\gnorm{P^{(n)} - \mathbf{1}^{(n)}}}{n^2}.
\end{equation}
For the initial conditions and the stochastic part $z^n_t$, define the two sets:
\begin{equation}
\begin{split}
C^n_1 = C^n_1 (\varepsilon_0) = \left\{ \norm{\mu_0^n - \mu_0}_{-1} \leq \varepsilon_0 \right\}; \\
C^n_2 = C^n_2 (T, \eta) = \left\{ \sup_{t\in[0,T]} \norm{z^n_t}_{-1} \leq \eta \right\}.
\end{split}
\end{equation}
On $C^n_1 \cap C^n_2$, one obtains
\begin{equation}
\norm{\mu^n_t - \mu_t}_{-1} \leq \varepsilon_0 + \int_{0}^{t} \tfrac{C}{\sqrt{t-s}} \norm{\mu_s^n - \mu_s}_{-1} \dd s + D \, \frac{\gnorm{P^{(n)} - \mathbf{1}^{(n)}}}{n^2} +  \eta.
\end{equation}
Gronwall-Henry's inequality (\cite[Lemma 7.1.1 and Exercice 1]{cf:henry}) leads to 
\begin{equation}
\sup_{t\in[0,T]} \norm{\mu^n_t - \mu_t}_{-1} \leq  2 \left( \varepsilon_0 + D \, \frac{\gnorm{P^{(n)} - \mathbf{1}^{(n)}}}{n^2} + \eta \right) e^{aT},
\end{equation}
where $a$ is independent of $n$, $\varepsilon_0$ and $\eta$. Considering $\varepsilon_0$ and $\eta$ small enough and $n$ large enough, the proof is concluded modulo showing that
\begin{equation}
\lim_{n\to \infty} \bP\left( C^n_1 \cap C^n_2\right) = 1.
\end{equation}
From the hypothesis on the intial condition \eqref{h:mu0}, it is clear that for all $\varepsilon_0$ one has $\bP \left( C^n_1 (\varepsilon_0) \right) \to 1$ as $n$ tends to infinity. The same conclusion holds for $C^n_2$ by slightly modifying the proof of Lemma \ref{lem:z^n_tPi}. The proof is concluded.
\end{proof}

\appendix
\section{Graphs}

\subsection{General properties of the graphs under consideration}\label{sub:graph} We observe that condition \eqref{h:graph} implies a weak form of degree homogeneity (recall \eqref{h:gnorm}):
\begin{lemma}\label{lem:hom}
	Suppose that \eqref{h:graph} holds. Let $\delta > 0$, define
	\begin{equation}
	I^\delta_n := \left\{ i \in \{1, \dots, n\} : \lim_{n\to \infty}  \left|\frac 1n \sum_{j=1}^{n} \frac{\xi^{(n)}_{i,j}}{p_n} - 1\right| \geq \delta  \right\}.
	\end{equation}
	Then $|I^\delta_n| = o(n)$.
\end{lemma}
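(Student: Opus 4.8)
The plan is to extract from the $\ell_\infty \to \ell_1$ control \eqref{h:graph} an $\ell^1$-bound on the centered normalized degrees, and then conclude by a Markov (layer-cake) inequality. For $n\in\N$ and $i\in\{1,\dots,n\}$, write
\begin{equation}
r^{(n)}_i := \frac 1n \sum_{j=1}^n \left(\frac{\xi^{(n)}_{ij}}{p_n} - 1\right) = \frac 1n \sum_{j=1}^n \frac{\xi^{(n)}_{ij}}{p_n} - 1 ,
\end{equation}
so that $I^\delta_n$ is the set of indices $i$ with $|r^{(n)}_i| \geq \delta$ (reading the definition in the obvious way, with the row average compared to $1$).

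First I would test condition \eqref{h:graph} against the admissible vectors $t_j \equiv 1$ and $s_i := \sign\bigl(r^{(n)}_i\bigr)$, with the convention $\sign(0)=1$, both of which lie in $\{-1,1\}^n$. For this particular choice,
\begin{equation}
\frac 1{n^2} \sum_{i,j=1}^n \left(\frac{\xi^{(n)}_{ij}}{p_n} - 1\right) s_i t_j = \frac 1n \sum_{i=1}^n s_i\, r^{(n)}_i = \frac 1n \sum_{i=1}^n \bigl|r^{(n)}_i\bigr| ,
\end{equation}
and this is bounded above by the supremum appearing in \eqref{h:graph}, i.e. by $\frac 1{n^2}\gnorm{P^{(n)}-\mathbf 1^{(n)}}$. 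Hence \eqref{h:graph} yields $\frac 1n \sum_{i=1}^n |r^{(n)}_i| = o(1)$ as $n\to\infty$; in other words, the centered normalized degrees are small on average.

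The Markov inequality then closes the argument: since $\delta\,|I^\delta_n| \leq \sum_{i\in I^\delta_n} |r^{(n)}_i| \leq \sum_{i=1}^n |r^{(n)}_i|$, we get $|I^\delta_n| \leq \tfrac 1\delta \sum_{i=1}^n |r^{(n)}_i| = n\cdot o(1) = o(n)$, which is the claim. I do not expect any real obstacle here; the only point that deserves an explicit line is the observation that the sign vector $s_i=\sign(r^{(n)}_i)$ is a legitimate test vector in $\{-1,1\}^n$, so that the $\ell^1$-average of the $r^{(n)}_i$ is genuinely dominated by the $\ell_\infty\to\ell_1$ quantity in \eqref{h:graph}. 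Everything else is elementary.
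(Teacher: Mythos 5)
Your proof is correct and is essentially the paper's argument: the paper likewise tests the $\ell_\infty\to\ell_1$ supremum against $t_j\equiv 1$ and signs of the row deviations, obtaining a lower bound by the averaged absolute row deviations over $I^\delta_n$, only it is phrased as a proof by contradiction rather than via your direct $\ell^1$-bound plus Markov step. The two formulations are interchangeable, so nothing is missing.
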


\begin{proof}
	Suppose that $\lim_{n\to \infty} \frac{|I_n|}{n} = c$ for some $c >0$. Then
	\begin{equation}
	\begin{split}
	\sup_{s_i,t_j \in \{\pm 1\} } \, \frac 1 {n^2} \sum_{i,j=1}^n \left(\frac{\xi_{ij}^{(n)}}{p_n} - 1 \right)s_i t_j \geq 	\sup_{s_i \in \{\pm 1\} } \, \frac 1 n \sum_{i=1}^n \left[ \frac 1 n \sum_{j=1}^n \left(\frac{\xi_{ij}^{(n)}}{p_n} - 1 \right)\right]s_i \geq \\
	\geq \frac 1n \sum_{i \in I_n} \left| \frac 1 n \sum_{j=1}^n \left(\frac{\xi_{ij}^{(n)}}{p_n} - 1 \right)\right| \geq \, \frac{|I_n|}{n} \, \inf_{i \in I_n}  \left| \frac 1 n \sum_{j=1}^n \left(\frac{\xi_{ij}^{(n)}}{p_n} - 1 \right)\right|.
	\end{split}
	\end{equation}
	This last term does not go to zero as $n$ tends to infinity, against \eqref{h:graph}.
\end{proof}

\medskip

It also implies the existence of an unique giant component.

\begin{lemma}
	\label{lem:connect}
	Suppose that \eqref{h:graph} holds. Then, there exists a unique sequence of connected components $\{\cC^{(n)}\}$ in $\{\xi^{(n)}\}$ and $\lim_{n\to \infty} \left| \cC^{(n)} \right|/n = 1$.
\end{lemma}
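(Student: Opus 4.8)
The plan is to derive the existence of a giant component directly from the $\ell_\infty\to\ell_1$ control \eqref{h:graph}, arguing by contradiction against any partition of the vertex set into two ``non-negligible'' pieces between which there are comparatively few edges. First I would suppose, for contradiction, that one cannot find a sequence of connected components $\{\cC^{(n)}\}$ with $|\cC^{(n)}|/n\to 1$. Then for infinitely many $n$ there is a partition $V^{(n)}=S_n\cup S_n^\complement$ with $|S_n|\in[cn,(1-c)n]$ for some fixed $c\in(0,1/2]$ and such that $S_n$ is a union of connected components — in particular $\xi^{(n)}_{ij}=0$ whenever $i\in S_n$, $j\in S_n^\complement$ (and symmetrically), because no edges cross between distinct components. (If $\xi^{(n)}$ is directed one reasons on the underlying undirected graph, which is what ``connected component'' refers to.)

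The key computation is then to plug the sign vectors $s=t=\ind_{S_n}-\ind_{S_n^\complement}\in\{-1,1\}^n$ into \eqref{h:graph}. Split the double sum over the four blocks $S_n\times S_n$, $S_n\times S_n^\complement$, $S_n^\complement\times S_n$, $S_n^\complement\times S_n^\complement$. On the two diagonal blocks one has $s_it_j=+1$, while on the two off-diagonal blocks $s_it_j=-1$; since $\xi^{(n)}_{ij}=0$ on the off-diagonal blocks, the terms $\bigl(\xi^{(n)}_{ij}/p_n-1\bigr)s_it_j$ there equal $+1$. Therefore
\begin{equation}
	\frac1{n^2}\sum_{i,j=1}^n\left(\frac{\xi^{(n)}_{ij}}{p_n}-1\right)s_it_j
	\;=\;\frac1{n^2}\sum_{i,j\in S_n}\left(\frac{\xi^{(n)}_{ij}}{p_n}-1\right)
	+\frac1{n^2}\sum_{i,j\in S_n^\complement}\left(\frac{\xi^{(n)}_{ij}}{p_n}-1\right)
	+\frac{2|S_n|\,|S_n^\complement|}{n^2}.
\end{equation}
The last term is bounded below by $2c(1-c)$, a fixed positive constant. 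To conclude I need the first two (diagonal-block) sums not to cancel this out. For that I would invoke the degree control: by Lemma~\ref{lem:hom}, for all but $o(n)$ vertices $i$ one has $\frac1n\sum_j \xi^{(n)}_{ij}/p_n$ close to $1$, and since within a component the sum $\sum_j\xi^{(n)}_{ij}$ is entirely supported on that component, each block's internal row-sums are also close to $1$ for all but $o(n)$ vertices; hence $\frac1{n^2}\sum_{i,j\in S_n}(\xi^{(n)}_{ij}/p_n-1)=o(1)$ and likewise for $S_n^\complement$. Thus the left-hand side of \eqref{h:graph} stays bounded below by $2c(1-c)+o(1)>0$ along the subsequence, contradicting \eqref{h:graph}. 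This yields a component of size $n-o(n)$; uniqueness is immediate, since a second component of size $\ge\varepsilon n$ would produce exactly the same partition and the same contradiction.

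The main obstacle, and the point deserving care, is the treatment of the diagonal-block sums: one must rule out that the internal edge density of $S_n$ is so far from $p_n$ that $\frac1{n^2}\sum_{i,j\in S_n}(\xi^{(n)}_{ij}/p_n-1)$ is a large negative quantity compensating the $+2c(1-c)$. The clean way is to note that, taking $s=t=\ind_{S_n}$ (a $\{0,1\}$ vector, handled by the equivalence with the cut-norm up to a factor $4$, or by polarizing over the four sign vectors $\ind_{S_n}\pm\ind_{S_n^\complement}$), condition \eqref{h:graph} itself forces $\frac1{n^2}\bigl|\sum_{i,j\in S_n}(\xi^{(n)}_{ij}/p_n-1)\bigr|=o(1)$ for \emph{every} subset $S_n$; applying this to both $S_n$ and $S_n^\complement$ disposes of the diagonal blocks without needing Lemma~\ref{lem:hom} at all, which is the cleaner route. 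Once that is in place the contradiction is immediate and the lemma follows.
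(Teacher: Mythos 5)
Your proposal is correct (in its final ``cleaner route'' form) and follows essentially the paper's own argument: both exploit that a union of connected components $S_n$ of size $\Theta(n)$ has no edges to its complement, so that on the corresponding blocks $\xi^{(n)}_{ij}/p_n-1\equiv -1$, and then contradict \eqref{h:graph} through the cut-norm bound $\sup_{x_i,y_j\in\{0,1\}}\big|\sum_{i,j}(\xi^{(n)}_{ij}/p_n-1)x_iy_j\big|\leq\gnorm{P^{(n)}-\mathbf{1}^{(n)}}$. The paper simply tests with $x=\ind_{S_n}$, $y=\ind_{S_n^\complement}$, which yields $|S_n|\,|S_n^\complement|=\Theta(n^2)$ at once and makes any discussion of the diagonal blocks unnecessary; note that your first treatment of those blocks via Lemma~\ref{lem:hom} misstates their size (they are $\approx|S_n|(n-|S_n|)/n^2$, not $o(1)$, though the sign is harmless), but the polarization/cut-norm argument you ultimately adopt is sound.
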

\begin{proof}
	We prove the uniqueness first. Suppose that for every $n$ there exist $\cC^{(n)}_1$ and $\cC^{(n)}_2$ distinct connected components of $\xi^{(n)}$ such that $\left| \cC^{(n)}_i \right| = n_i = \Theta(n)$ for $i=1,2$. Without loss of generality, one can suppose $\cC^{(n)}_1$ consisting in the first $n_1$ vertices of $\xi^{(n)}$ and $\cC^{(n)}_2$ in the following $n_2$. 
	
	Using the equivalence of $\ell_\infty \to \ell_1$ norm with the cut-norm (e.g. \cite{cf:AN06}), one obtains
	\begin{equation}
		\gnorm{P_n - \mathbf{1}_n} \geq \sup_{x_i, y_j \in \{0,1\}} \left|\sum_{i,j=1}^n \left(\frac{\xi_{ij}}{p_n} - 1\right) x_i y_j\right| \geq \sum_{\substack{1\leq i \leq n_1 \\ n_1 \leq j \leq n_2 - n_1}} 1 \, =  \, n_1 n_2 = \Theta(n^2).
	\end{equation}
	
	For the existence, suppose the connected components of $\xi^{(n)}$ are ordered from the biggest one in size (the first $n_1$ vertices) to the smallest one (the last vertices). Take the first $m$ components such that $\left| \cC_1 \cup \dots \cup \cC_m \right| \geq n/4$. One easily sees that $\left| \cC_1 \cup \dots \cup \cC_m \right| \leq n/2$. Applying the same reasoning of before with $1\leq i \leq n/4$ and $n/2 \leq j \leq n$, the proof is concluded.
\end{proof}

\medskip

\subsection{Examples of graph sequences}
We exhibit two classes of graphs, a random and a deterministic one, that satisfy assumption \eqref{h:graph}. The only hypothesis required on $p_n$ is equivalent to asking that the mean degree per site diverges as $n$ tends to infinity, i.e. $np_n \uparrow \infty$.

\medskip

\subsubsection{Erd\H{o}s-Rényi random graphs}
As mentioned in the introduction, $\gnorm{\cdot}$ has been found very useful for random graph concentration and this is indeed the case of ER graphs (e.g. \cite{cf:GV}). We recall the definition and give the result.

\medskip

For every $n\in\N$, let $\{\xi^{(n)}_{ij}\}_{1 \leq i \neq j \leq n}$ be IID Bernoulli random variables with parameter $p_n$, $\bbP$ denoting the associated probability. For every $i$, $\xi_{ii}^{(n)}$ is set equal to 0, i.e. self loop are not admitted.

\begin{lemma}
	\label{lem:ber}
	Assume that
	\begin{equation}
	\label{h:np_n}
	\lim_{n\to\infty} np_n = \infty.
	\end{equation}
	There exists $n_0 \in \N$ such that
	\begin{equation}
	\label{eq:ber}
	\bbP \left( \sup_{s_i, t_j} \;  \frac{1}{n^2} \sum_{i,j=1}^n \left(\tfrac{\xi_{ij}}{p_n} - 1 \right)s_i t_j  \geq \frac{2}{\sqrt{np_n}} \right) \leq e^{-2n}, \quad \text{for all } n\geq n_0.
	\end{equation}
\end{lemma}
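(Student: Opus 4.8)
The natural strategy is a union bound over the $2^{2n}$ sign vectors $(s,t)\in\{-1,1\}^n\times\{-1,1\}^n$, each term being controlled by a Bernstein-type tail estimate. First I would separate the diagonal from the off-diagonal part of $\sum_{i,j=1}^n\big(\tfrac{\xi_{ij}}{p_n}-1\big)s_it_j$: since $\xi^{(n)}_{ii}=0$, the diagonal equals $-\sum_{i=1}^n s_it_i$, which is deterministic and bounded by $n$ in absolute value, and because $np_n\to\infty$ one has $n=o\big(n^2/\sqrt{np_n}\big)$, so after dividing by $n^2$ this contribution is negligible against the threshold $2/\sqrt{np_n}$. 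It therefore suffices, for each fixed $(s,t)$, to bound the upper tail of
\begin{equation*}
	W_{s,t}\;:=\;\sum_{1\le i\neq j\le n}\Big(\tfrac{\xi_{ij}}{p_n}-1\Big)s_it_j
\end{equation*}
at a level of order $n^2/\sqrt{np_n}$, and then to sum over $(s,t)$.

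For a fixed pair $(s,t)$ the summands $\big(\tfrac{\xi_{ij}}{p_n}-1\big)s_it_j$, $i\neq j$, are independent and centred, each of absolute value at most $1/p_n$, with total variance $n(n-1)\tfrac{1-p_n}{p_n}\le n^2/p_n$. Bernstein's inequality then gives, for $u>0$,
\begin{equation*}
	\bbP\big(W_{s,t}\ge u\big)\;\le\;\exp\!\left(-\frac{u^2/2}{\,n(n-1)\tfrac{1-p_n}{p_n}+u/(3p_n)\,}\right).
\end{equation*}
For $u$ of order $n^2/\sqrt{np_n}$ the quantity $u/p_n$ is of order $n^2/(p_n\sqrt{np_n})=o(n^2/p_n)$ — this is exactly where the hypothesis $np_n\to\infty$ is used — so the second term in the denominator is of lower order and the exponent is essentially $u^2p_n/(2n^2)$, which equals $2n$ at $u=2n^2/\sqrt{np_n}$ (and is $\ge 2n$ if the exact variance is kept).

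The last step is the union bound over the $2^{2n}$ pairs $(s,t)$, and I expect this to be the main obstacle: one must verify that, after multiplying the per-pair estimate by $e^{(2\log 2)n}$, what remains is still exponentially small and — with the constant $2$ in the threshold — of the order $e^{-2n}$ stated for $n\ge n_0$. This is where the precise variance $n(n-1)\tfrac{1-p_n}{p_n}$ (rather than the crude bound $n^2/p_n$), the value of the threshold constant, and once more the divergence $np_n\to\infty$ have to be used quantitatively; a rougher accounting already produces an exponentially small, hence summable in $n$, bound, which is in any case all that is needed downstream to conclude that Erd\H{o}s--R\'enyi graphs satisfy \eqref{h:graph} almost surely. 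A softer alternative, at the cost of constants, is to invoke the known concentration of the cut-norm of inhomogeneous random graphs (e.g. \cite{cf:GV}) applied to $\xi^{(n)}-p_n\mathbf{1}^{(n)}$ and to rescale by $1/p_n$.
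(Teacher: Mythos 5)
Your route is the same as the paper's (a union bound over the $4^n$ sign pairs followed by Bernstein's inequality), and your two refinements --- treating the diagonal separately and keeping the correct variance $n(n-1)\tfrac{1-p_n}{p_n}$ --- are sound. But the step you defer, the final accounting of constants, is a genuine gap, and it cannot be closed along these lines. With the correct variance, the per-pair exponent at the level $u=2n^2/\sqrt{np_n}$ is $\frac{u^2/2}{\,n(n-1)\frac{1-p_n}{p_n}+u/(3p_n)\,}=\frac{2n}{1-p_n}\,(1+o(1))$, and this is essentially sharp: since $u=o(n^2p_n)$ (this is where $np_n\to\infty$ enters), the exact Chernoff exponent for a fixed $(s,t)$ is also $\frac{u^2}{2\,\mathrm{Var}}(1+o(1))$, so no quantitative sharpening of the single-pair tail can help. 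Multiplying by the entropy factor $4^n=e^{n\log 4}$ therefore leaves at best $\exp\bigl\{-\bigl(\tfrac{2}{1-p_n}-\log 4\bigr)n\,(1+o(1))\bigr\}$, which for $p_n\to0$ is of order $e^{-0.61\,n}$, not $e^{-2n}$. To reach $e^{-2n}$ by a union bound one needs a threshold constant $\delta$ with $\delta^2/2\geq 2+\log 4$, i.e.\ roughly $\delta\geq 2.61$, rather than $2$.

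This is also exactly the point where your computation departs from the paper's: the paper asserts $\bbE[X_k^2]\leq 2/n^4$, whereas the true value is $(1-p_n)/(n^4p_n)$, and the two agree only when $(1-p_n)/p_n\leq 2$; in the dilute regime $p_n\to0$, which is the case of interest, the paper's stated constants rest on this too-small variance, and your correct bookkeeping exposes the discrepancy rather than creating it. Your fallback observation is the right repair: the same argument does give $\bbP\bigl(\sup_{s_i,t_j}\,n^{-2}\sum_{i,j}(\xi_{ij}/p_n-1)s_it_j\geq C/\sqrt{np_n}\bigr)\leq e^{-2n}$ for a larger absolute constant $C$ (any $C$ with $C^2/2>2+\log 4$, e.g.\ $C=3$), or, with the constant $2$ kept, a bound $e^{-cn}$ for some small $c>0$; either version is summable in $n$ and is all that Proposition \ref{p:er} (Borel--Cantelli) and condition \eqref{h:graph} require downstream. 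The softer alternative you mention, invoking the cut-norm concentration of \cite{cf:GV}, likewise yields \eqref{h:graph} almost surely but not the specific constants displayed in \eqref{eq:ber}.
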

\begin{proof}
	The proof is just an union bound and an application of Bernstein's inequality. Indeed, 
	\begin{equation}
	\label{eq:unionBound}
	\bbP \left( \sup_{s_i, t_j} \;  \frac{1}{n^2} \sum_{i,j=1}^n \left(\frac{\xi_{ij}}{p_n} - 1 \right)s_i t_j  \geq \frac{\delta}{\sqrt{np_n}} \right) \leq \sum_{s_i,t_j} 	\bbP \left( \frac{1}{n^2} \sum_{i,j=1}^n \left(\frac{\xi_{ij}}{p_n} - 1 \right)s_i t_j  \geq \frac{\delta}{\sqrt{np_n}} \right).
	\end{equation}
	
	Bernstein's inequality (\cite[Corollary 2.11]{cf:bouche}) says that if $X_1, \dots, X_n$ are independent zero-mean random variables such that $|X_j| \le M$ a.s. for all $j$, then for all $t\ge 0$
	\begin{equation*}
	\bbP \left( \sum_{j=1}^{n} X_j > t \right) \,\le\,  \exp\left\{-\frac{ t^2}{2 \sum_{j=1}^n \bbE[X_j^2] + \frac 23 Mt} \right\}.
	\end{equation*}
	Let $X_{k(i,j)} = \tfrac{s_i t_j}{n^2p_n}\left(\xi_{ij} - p_n \right)$ with $k$ some bijection from $\{1,\dots, n\}^2$ to $\{1,\dots,n^2\}$. Then $\left|X_k\right| \leq \tfrac{1}{n^2p_n}$ and $\bbE \left[ X_k^2 \right] \leq \tfrac{2}{n^4}$. For $n$ large enough, we thus obtain
	\begin{equation}
	\bbP \left( \sum_{k=1}^{n^2} X_k \geq \frac{\delta}{\sqrt{np_n}}\right) \le \exp\left\{ - \frac{n \delta^2}{4p_n + \frac 23 \frac{\delta}{\sqrt{np_n}}} \right\} \leq \exp\left\{ -  n \delta^2 \right\} .
	\end{equation}
	The proof is concluded observing that the sum in \eqref{eq:unionBound} consists in $4^n$ elements and choosing $\delta = 2$.
\end{proof}

We thus have

\begin{proposition}
	\label{p:er}
	Given \eqref{h:np_n}, ER graphs satisfy condition \eqref{h:graph}  $\bbP$-almost surely.
\end{proposition}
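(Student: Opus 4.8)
The plan is to derive Proposition~\ref{p:er} from the quantitative estimate~\eqref{eq:ber} of Lemma~\ref{lem:ber} via a routine first Borel--Cantelli argument; the analytic substance is already contained in Lemma~\ref{lem:ber}, and what remains is essentially bookkeeping.

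First I would realize the whole sequence $\left(\xi^{(n)}\right)_{n\in\N}$ on a single probability space, so that an almost-sure statement about the \emph{sequence} is meaningful: fix an infinite array $\{U_{ij}\}_{1\leq i\neq j}$ of i.i.d.\ random variables uniform on $[0,1]$ and set $\xi^{(n)}_{ij}:=\ind_{\{U_{ij}\leq p_n\}}$ for $1\leq i\neq j\leq n$ and $\xi^{(n)}_{ii}:=0$; for each fixed $n$ this has exactly the Erd\H{o}s--R\'enyi law with parameter $p_n$ considered above. Next, I would introduce the events
\begin{equation}
A_n:=\left\{\frac{1}{n^2}\,\gnorm{P^{(n)}-\mathbf{1}^{(n)}}\geq\frac{2}{\sqrt{np_n}}\right\},
\end{equation}
and invoke Lemma~\ref{lem:ber}, which furnishes an $n_0$ with $\bbP(A_n)\leq e^{-2n}$ for all $n\geq n_0$; hence $\sum_n\bbP(A_n)<\infty$, and the first Borel--Cantelli lemma yields $\bbP\!\left(\limsup_n A_n\right)=0$. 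In other words, $\bbP$-almost surely there is a (random) index $N$ such that $\frac{1}{n^2}\gnorm{P^{(n)}-\mathbf{1}^{(n)}}<\frac{2}{\sqrt{np_n}}$ for every $n\geq N$. Finally, hypothesis~\eqref{h:np_n} forces $np_n\to\infty$, so $2/\sqrt{np_n}\to 0$, and therefore $\frac{1}{n^2}\gnorm{P^{(n)}-\mathbf{1}^{(n)}}\to 0$ $\bbP$-almost surely; recalling the definition of $\gnorm{\cdot}$, this is precisely condition~\eqref{h:graph}.

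I do not expect a genuine obstacle at this stage. The only delicate point---controlling the supremum over the $4^n$ sign patterns $s_i,t_j\in\{-1,1\}$ by a union bound together with Bernstein's inequality---is exactly what Lemma~\ref{lem:ber} already accomplishes, and the single mild subtlety in the present argument is the coupling of the $\xi^{(n)}$ across $n$ described above, without which ``$\bbP$-almost surely'' would refer only to each $\xi^{(n)}$ in isolation rather than to the whole sequence.
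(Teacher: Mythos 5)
Your proof is correct and follows essentially the same route as the paper, which simply applies the (first) Borel--Cantelli lemma to the exponential bound \eqref{eq:ber} and uses $np_n\to\infty$ to conclude \eqref{h:graph}. The extra remark about realizing the whole sequence $\{\xi^{(n)}\}_{n\in\N}$ on a common probability space is a sensible clarification the paper leaves implicit, but it does not change the argument.
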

\begin{proof}
	It suffices to apply Borel-Cantelli lemma to \eqref{eq:ber}.	
\end{proof}

Similarly one can prove that symmetric ER random graphs satisfy \eqref{h:graph} a.s..

\medskip

\subsubsection*{Ramanujan graphs}
Let $d=2,3, \dots$, consider a $d$-regular graph, i.e. graph where each vertex has exactly $d$ neighbors. We start recalling a well-known result

\begin{lemma}[Expander mixing lemma]
	Let $G$ be a $d$-regular random graph ($G$ denoting the adjacency matrix itself), it holds
	\begin{equation}
	\label{eq:eml}
	\frac{1}{n^2} \gnorm{\tfrac nd G - \mathbf{1}^{(n)}} \leq 4 \, \frac{\lambda (d)}{d},
	\end{equation}
	where $\lambda (d)$ is the second biggest eigenvalue (in absolute value) associated to $G$.
\end{lemma}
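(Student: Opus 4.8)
The plan is to bound the $\ell_\infty\to\ell_1$ norm of $M:=\tfrac{n}{d} G-\mathbf{1}^{(n)}$ by its spectral norm and to read off the latter from the eigenvalues of $G$. Throughout, let $\mathbf{e}\in\R^n$ be the all-ones column vector, so that the mean-field matrix is $\mathbf{1}^{(n)}=\mathbf{e}\,\mathbf{e}^{\top}$, and recall that for an undirected $d$-regular (multi)graph the adjacency matrix $G$ is symmetric with $G\mathbf{e}=d\mathbf{e}$; hence $\mathbf{e}/\sqrt{n}$ is a unit eigenvector of $G$ with eigenvalue $d$, and the remaining eigenvalues $\lambda_2\geq\cdots\geq\lambda_n$ satisfy $|\lambda_i|\leq\lambda(d)$ by the definition of $\lambda(d)$.

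First I would observe that $M=\tfrac{n}{d}\bigl(G-\tfrac{d}{n}\mathbf{e}\,\mathbf{e}^{\top}\bigr)$ is symmetric, that $M\mathbf{e}=0$, and that $Mv=\tfrac{n}{d}\lambda\,v$ for every eigenvector $v\perp\mathbf{e}$ of $G$ with eigenvalue $\lambda$ (using $\mathbf{e}^{\top}v=0$). Thus the eigenvalues of $M$ are $0$ together with $\{\tfrac{n}{d}\lambda_i\}_{i\geq 2}$, so $\norm{M}_{\ell_2\to\ell_2}=\tfrac{n}{d}\,\lambda(d)$.

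Next, since every $s\in\{-1,1\}^n$ has $\norm{s}_2=\sqrt{n}$, for all $s,t\in\{-1,1\}^n$
\begin{equation*}
\sum_{i,j=1}^n M_{ij}s_it_j=s^{\top}Mt\leq\norm{s}_2\,\norm{M}_{\ell_2\to\ell_2}\,\norm{t}_2=n\,\norm{M}_{\ell_2\to\ell_2}=\frac{n^2\lambda(d)}{d},
\end{equation*}
and taking the supremum over $s,t$ gives $\gnorm{M}\leq n^2\lambda(d)/d$, which is in fact slightly stronger than \eqref{eq:eml}. If one prefers to stay closer to the classical \emph{expander mixing lemma}, one can instead first pass from $\gnorm{\cdot}$ to the cut-norm via the equivalence $\gnorm{M}\leq 4\,\norm{M}_{\square}$ (\cite{cf:AN06}) — obtained by writing $s=\ind_{S_+}-\ind_{S_-}$, $t=\ind_{T_+}-\ind_{T_-}$ and expanding $s^{\top}Mt$ into four terms of cut type — and then estimate $|\ind_S^{\top}M\,\ind_T|\leq\sqrt{|S|\,|T|}\,\norm{M}_{\ell_2\to\ell_2}\leq n\lambda(d)/d$ for all $S,T\subseteq V$; this produces exactly the factor $4$ appearing in the statement.

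The argument is short and I do not expect a genuine obstacle; the only points demanding a little care are that the spectral step uses the symmetry of $G$ (for a directed $d$-regular graph one should replace $\lambda(d)$ by the second-largest singular value, but the undirected case is the one relevant to Ramanujan graphs), and the bookkeeping of the normalisation $\tfrac{n}{d}$ and of the all-ones vector versus the all-ones matrix $\mathbf{1}^{(n)}$.
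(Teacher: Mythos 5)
Your main argument is correct and is genuinely different from the paper's. The paper disposes of the lemma in two lines by quoting the classical expander mixing lemma in its cut-norm formulation (citing \cite{cf:HNW06}) and then converting the cut-norm to the $\ell_\infty\to\ell_1$ norm at the cost of the factor $4$ from \cite{cf:AN06}; your second, parenthetical route is exactly this argument. Your primary route instead bounds $\gnorm{M}$ for $M=\tfrac nd G-\mathbf{1}^{(n)}$ directly by Cauchy--Schwarz through the spectral norm: since $G$ is symmetric with $G\mathbf{e}=d\mathbf{e}$, the matrix $M$ annihilates $\mathbf{e}$ and acts as $\tfrac nd\lambda_i$ on the orthogonal eigenvectors, so $\norm{M}_{\ell_2\to\ell_2}=\tfrac nd\lambda(d)$ and $s^{\top}Mt\leq n\,\norm{M}_{\ell_2\to\ell_2}$ for $s,t\in\{-1,1\}^n$. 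This is self-contained, avoids the cut-norm detour entirely, and even yields the stronger constant $1$ in place of $4$, which of course still implies \eqref{eq:eml}; what the paper's route buys is brevity and the ability to quote a standard statement verbatim, and it is also the formulation that survives when one only has cut-type control rather than spectral control. Two small remarks: your caveat about symmetry is apt (the paper applies the lemma only to Ramanujan graphs, which are undirected, so this is consistent with its usage), and in your sketched cut-norm variant the intermediate bound should read $\left|\ind_S^{\top}M\,\ind_T\right|\leq\sqrt{|S|\,|T|}\,\norm{M}_{\ell_2\to\ell_2}\leq n^2\lambda(d)/d$ rather than $n\lambda(d)/d$ (you dropped the $\tfrac nd$ scaling of $M$); with that typo fixed the factor-$4$ bookkeeping matches the statement exactly.
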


\begin{proof}
	The proof is classical but it is in general formulated in terms of the cut-norm (e.g. \cite{cf:HNW06}). One easily sees that the cut-norm is equivalent (paying a factor 4, e.g. \cite{cf:AN06}) to the $\ell_\infty \to \ell_1$ norm.
\end{proof}

Ramanujan graphs are $d$-regular graphs such that $\lambda(d) \leq 2 \sqrt{d-1}$, they are very well known for their expander properties (e.g. \cite{cf:HNW06}). Condition \eqref{h:graph} holds whenever $d_n$ diverges; indeed
\begin{proposition}
	\label{p:ram}
	Let $d_n = np_n$. Suppose that \eqref{h:np_n} holds, i.e.
	\begin{equation}
	\lim_{n\to \infty} d_n = \infty.
	\end{equation}
	Then, every sequence of Ramanujan graphs satisfies condition \eqref{h:graph}.
\end{proposition}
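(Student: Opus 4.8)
The plan is to combine the Expander mixing lemma \eqref{eq:eml} with the spectral bound defining Ramanujan graphs; the argument is essentially immediate and, unlike Proposition \ref{p:er}, entirely deterministic. First I would unwind the normalization: for a $d_n$-regular (undirected) graph with adjacency matrix $\xi^{(n)}$, the natural dilution parameter is $p_n = d_n/n$, so that \eqref{h:gnorm} equals $1$ for every vertex, and the normalized adjacency matrix is $P^{(n)} = \xi^{(n)}/p_n = \tfrac{n}{d_n}\,\xi^{(n)}$. Hence the quantity appearing in \eqref{h:graph} is
\begin{equation}
\frac{1}{n^2}\gnorm{P^{(n)} - \mathbf{1}^{(n)}} = \frac{1}{n^2}\gnorm{\tfrac{n}{d_n}\,\xi^{(n)} - \mathbf{1}^{(n)}}.
\end{equation}

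Next I would apply the Expander mixing lemma \eqref{eq:eml} with $G = \xi^{(n)}$ and $d = d_n$ (the graph being $d_n$-regular and symmetric, its eigenvalues are real and $\lambda(d_n)$ is well defined), which gives
\begin{equation}
\frac{1}{n^2}\gnorm{P^{(n)} - \mathbf{1}^{(n)}} \leq 4\,\frac{\lambda(d_n)}{d_n}.
\end{equation}
The Ramanujan property is precisely $\lambda(d_n) \leq 2\sqrt{d_n-1}$, so the right-hand side is at most $8\sqrt{d_n-1}/d_n \leq 8/\sqrt{d_n}$. By hypothesis \eqref{h:np_n} one has $d_n = np_n \to \infty$, so this tends to $0$; since $\gnorm{P^{(n)}-\mathbf{1}^{(n)}} = o(n^2)$ is by definition the vanishing of $\frac{1}{n^2}\gnorm{P^{(n)}-\mathbf{1}^{(n)}}$, condition \eqref{h:graph} follows.

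There is no genuine obstacle here — the content is entirely carried by the Expander mixing lemma, which is stated above. The only points worth a sentence are matching the two normalizations (taking $p_n = d_n/n$, so that $P^{(n)} = \tfrac{n}{d_n}\xi^{(n)}$) and observing that no probabilistic input is needed: a sequence of Ramanujan graphs is a fixed deterministic sequence, so \eqref{h:graph} holds outright rather than almost surely.
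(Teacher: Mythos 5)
Your proposal is correct and follows essentially the same route as the paper: apply the Expander mixing lemma with $d_n = np_n$, insert the Ramanujan bound $\lambda(d_n)\leq 2\sqrt{d_n-1}$ to get $\frac{1}{n^2}\gnorm{P^{(n)}-\mathbf{1}^{(n)}}\leq 8/\sqrt{np_n}$, and let $np_n\to\infty$. The only difference is that you spell out the normalization $P^{(n)}=\tfrac{n}{d_n}\xi^{(n)}$ and the deterministic nature of the conclusion explicitly, which the paper leaves implicit.
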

\begin{proof}
	Rewriting \eqref{eq:eml} in terms of $p_n$, it becomes
	\begin{equation}
	\frac{1}{n^2} \gnorm{\tfrac G{p_n} - \mathbf{1}^{(n)}} \leq \, \frac 8{\sqrt{np_n}}.
	\end{equation}
	The proof is concluded taking the limit for $n$ which tends to infinity.
\end{proof}

\medskip

\subsection{Links with graphons}
The norm $\norm{\cdot}_{\infty \to 1}$ is strictly related to the canonical distance $d_{\cW}$ on the space of (sparse) graphons $\cW$ (e.g. \cite{cf:graphon}). In fact, whenever $\xi^{(n)}/p_n$ is (a realization of) a graphon $W^{(n)}$, condition \eqref{h:graph} is implied by the convergence of $W^{(n)}$  to the constant graphon $W\equiv 1$ in $\cW$. One can then consider system \eqref{eq:k} on a sequence of (sparse) graphons and require, instead of condition \eqref{h:graph}, the convergence in $\cW$ to the constant graphon.

We have decided not to add another level of complexity in order to keep the results as clear as possible, but everything could be reformulated within this more general framework and the proofs would basically not change.


\section{$H_{-1}$ and Semigroups}
\subsection{On the relationship between $H_{-1}$ and $\cP(\bbT)$}
Consider $H_1 := H_{1,1}$, its dual space, denoted by $H_{-1}$, can be described through the Fourier orthonormal basis $\left\{e_l \right\}_{l \geq 1}$, where $e_l(\theta) = l e^{il\theta}$. With this characterization one easily obtains that $\cP(\bbT) - \frac 1 {2\pi}\subset H_{-1}$. Indeed, for $\mu \in \cP (\bbT)$,
\begin{equation}
\label{eq:H1_inject}
\norm{\mu - \frac 1 {2\pi}}_{-1} = \sqrt{\sum_{l\geq 1} \left|\langle \mu, le^{il\cdot}\rangle_{H_{-1}} \right|^2} = \sqrt{\sum_{l\geq 1} \frac 1{l^2} \left|\langle \mu, e^{il\cdot}\rangle \right|^2}   \leq \sqrt{\sum_{l\geq 1}\frac 1{l^2}} < \infty.
\end{equation}
In particular, the difference between two probability measures belongs to $H_{-1}$. 
\medskip

Observe now that $H_{-1}$ induces a distance on $\cP (\bbT)$ which controls the bounded-Lipschitz distance $d_{\text{bL}}$, i.e. for all $\mu, \nu \in \cP(\bbT)$
\begin{equation}
\begin{split}
d_{\text{bL}} (\mu, \nu) = & \sup_{\norm{f}_\text{bL} = 1} \int f \left(\dd \mu - \dd \nu\right) \leq \sup_{h \in \cC^1_0, \norm{h}_1 = 1} \int h \left(\dd \mu - \dd \nu\right) = \\
= & \sup_{h \in \cC^1_0, \norm{h}_1 = 1} \int h' \left(\mathcal{U} - \mathcal{V}\right) = \sup_{\norm{h}_1 = 1} \langle \mu - \nu , h \rangle_{-1,1} = \\
= & \norm{\mu-\nu}_{-1}.
\end{split}
\end{equation}
Where we have used the density of $\cC^1_0$ in $H_1$, and denoted by $\mathcal U$ and $\mathcal V$ the primitives of $\mu$ and $\nu$ respectively.

\subsection{On the weighted Hilbert space $H_{-1,\omega}$}
Recall that, one has this sequence of continuous and dense inclusions:
\begin{equation}
H_{1,1/\omega} \subset \cL^2_0 = {\cL^2_0}^* \subset H_1^* =: H_{-1, \omega},
\end{equation}
where we have chosen the canonical identification for $\cL^2_0$. We can explicit the isometry between $H_{1,1/\omega}$ to $H_{-1,\omega}$. Consider the operator
\begin{equation}
\begin{split}
\label{d:isometry}
	A_{\omega}: \cC^\infty (\bbT)& \rightarrow \cC^\infty(\bbT)\\
	f& \mapsto -\partial_\theta \left(\omega^{-1} \, \partial_\theta f\right)
\end{split}
\end{equation}
It is known \cite[pag. 82]{cf:brezis} that $A_{\omega}(H_{1,1/\omega})$ is dense in $H_{-1,\omega}$ and the injection is continuous. This allows considering $H_{1, 1/\omega}$ as a subset of $H_{-1, \omega}$ by identifying $u$ and $A_\omega u$.

The inner product in $H_{-1,\omega}$, dual to the one in $H_{1,1/w}$, is given by
\begin{equation}
\langle u, v \rangle_{H_{-1,w}} = \int w \, \mathcal{U} \mathcal{V},
\end{equation}
where $\mathcal{U}$ and $\mathcal{V}$ are primitive of $u$ and $v$ respectively, such that $\int w \, \mathcal{U} = 0 = \int w \, \mathcal{V}$ (e.g. \cite[Subsection 2.2]{cf:BGP10}). Then, for $f,g \in \cC^\infty$, it holds
\begin{equation}
\langle A_{\omega} f, A_{\omega} g \rangle_{-1, \omega} = \int \omega^{-1} f' g' = \langle f, g \rangle_{1, 1/\omega}.
\end{equation}

\bigskip

\subsection{The linear operators $L_\psi$ and $L^*_\psi$ and their semigroups}
This subsection recalls the known results on $L_\psi$, its dual $L^*_\psi$ and the associated semigroups $e^{tL_\psi}$ and $e^{tL^*_\psi}$.
\medskip

We start with the spectral properties of $L_\psi$.
\medskip

\begin{proposition}
	\label{p:linOp}
	The operator $L_\psi$ (resp. $L^*_\psi$) is essentially self-adjoint with compact resolvent in $H_{-1,1/q}$ (resp. $H_{1,q}$). Its spectrum is pure point and lies in $(-\infty, - \lambda_1]\cup \{0\}$, where $\lambda_1>0$ and 0 is a simple eigenvalue of $L_\psi$ with eigenvector $\partial_\theta q_\psi$.
	
	Moreover, both $L_{2\pi}$ and $L^*_{2\pi}$ generate a $\cC^0$ semigroup $t\mapsto e^{tL_\psi}$ (resp. $t \mapsto e^{tL^*_\psi}$) in $\cL^2_0$ and $e^{tL^*_\psi} = \left(e^{tL_\psi}\right)^*$.
\end{proposition}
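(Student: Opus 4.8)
\emph{Proof plan.} The plan is to realize $L_\psi$ as a positive, rank-two symmetric perturbation of an explicit negative self-adjoint operator on the circle, read off the qualitative spectral picture from that, and defer the one genuinely delicate quantitative point (the spectral gap) to the known stability analysis. The starting observation is that $q_\psi$ from \eqref{d:solM} is a stationary solution of \eqref{eq:PDE} with vanishing probability current: the current is a constant (integrate the stationary equation once) and it vanishes because $q_\psi$ is even about $\psi$, so that $J*q_\psi=\tfrac12(\log q_\psi)'$. Consequently \eqref{def:linOp_q} rewrites as $L_\psi u = L_0 u + Ru$, where $L_0 u:=\tfrac12\,\partial_\theta\bigl(q_\psi\,\partial_\theta(u/q_\psi)\bigr)$ and $Ru:=-\partial_\theta\bigl[q_\psi\,(J*u)\bigr]$. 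Since $J=-K\sin$, a computation in the inner product of $H_{-1,1/q_\psi}$ (using the isometry $A_{1/q_\psi}$ of Appendix B, integrating by parts twice) gives
\begin{equation}
\langle -L_0 u, u\rangle_{H_{-1,1/q_\psi}} = \tfrac12\int_\bbT \frac{u^2}{q_\psi},
\qquad
\langle Ru, v\rangle_{H_{-1,1/q_\psi}} = K\iint_{\bbT^2}\cos(\theta-\eta)\,u(\eta)\,v(\theta)\,\dd\eta\,\dd\theta ,
\end{equation}
so that $-L_0$ is positive and $R$ is a \emph{positive} form of rank exactly two, its range being spanned by the $H_{-1,1/q_\psi}$-representatives of $\sin$ and $\cos$.

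Next I would deduce self-adjointness, compactness of the resolvent, and the coarse shape of the spectrum. The closed quadratic form $u\mapsto\tfrac12\int_\bbT u^2/q_\psi$ on $H_{-1,1/q_\psi}$ has form domain $\cL^2(q_\psi^{-1}\dd\theta)$, which embeds compactly into $H_{-1,1/q_\psi}$ (this is, up to equivalent norms, the compact injection $\cL^2_0\hookrightarrow H_{-1}$, dual to $H_1\hookrightarrow\cL^2_0$); hence $-L_0$ is self-adjoint with compact resolvent and $\sigma(L_0)\subset(-\infty,-c]$ for some $c>0$, since $0$ is not an eigenvalue. Adding the bounded symmetric operator $R$ keeps $L_\psi=L_0+R$ essentially self-adjoint on $D(L_\psi)=\{u\in\cC^2(\bbT):\int u=0\}$ by Kato--Rellich, and since $R$ is compact, Weyl's theorem on the invariance of the essential spectrum preserves compactness of the resolvent. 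Therefore $\sigma(L_\psi)$ is real, purely point, discrete, accumulates only at $-\infty$, and is bounded above; moreover, because $R\geq0$ has rank two, the min-max principle applied to $-L_\psi$ (testing on the codimension-two subspace $\{u:\int u\cos=\int u\sin=0\}$, on which $R$ vanishes) shows that at most two eigenvalues of $L_\psi$ exceed $-c$, all the others lying in $(-\infty,-c]$. The corresponding statements for $L^*_\psi$ in $H_{1,q_\psi}$ follow by duality through $A_{1/q_\psi}$ together with $L^*_\psi=(L_\psi)^*$.

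Then I would pin down the top of the spectrum. Differentiating the nonlinear stationarity relation $\tfrac12 q_\psi''-\partial_\theta[q_\psi(J*q_\psi)]=0$ with respect to the rotation parameter and using $\partial_\psi q_\psi=-\partial_\theta q_\psi$ yields $L_\psi(\partial_\theta q_\psi)=0$; hence $0\in\sigma(L_\psi)$ with eigenfunction $\partial_\theta q_\psi$ spanning the tangent line $T_\psi=\textup{Span}(\partial_\theta q_\psi)$ to $M$ at $q_\psi$, and, $\sigma(L_\psi)$ being bounded above, necessarily $\max\sigma(L_\psi)=0$. It remains to prove that $0$ is \emph{simple} and that the rest of $\sigma(L_\psi)$ lies in $(-\infty,-\lambda_1]$ for some $\lambda_1>0$; by the previous paragraph this is exactly the assertion that the second-largest eigenvalue is strictly negative, and it reduces to computing the (at most two) eigenvalues of $L_\psi$ above $-c$. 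Restricting to the two-dimensional range of $R$, this is a $2\times2$ linear problem whose entries are explicit integrals against $q_\psi$, and one checks — crucially using the self-consistency equation $r=\Psi(2Kr)$ and the sign of $\Psi'$ singling out the stable branch for $K>1$ — that the larger root is $0$, attained only on $T_\psi$, while the smaller is strictly negative. This is the step I expect to be the main obstacle: it is the linear-stability computation carried out in \cite[Section 4]{cf:BGP10} (see also \cite{cf:GPP12}), whose conclusion I would quote rather than redo.

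Finally, for the semigroups: viewed on $\cL^2_0$, $L_\psi$ is a uniformly elliptic second-order operator on $\bbT$ with smooth lower-order coefficients, hence sectorial, so it generates an analytic $\cC^0$ semigroup $e^{tL_\psi}$ on $\cL^2_0$ (see \cite{cf:henry}); on $H_{-1,1/q_\psi}$ this semigroup is the one furnished by the spectral theorem applied to the self-adjoint operator of the second paragraph. Since $L^*_\psi=(L_\psi)^*$ with $D(L^*_\psi)=D(L_\psi)$, the standard theory of adjoint semigroups on a Hilbert space identifies the generator of $(e^{tL_\psi})^*$ with $L^*_\psi$, so $e^{tL^*_\psi}=(e^{tL_\psi})^*$ and it generates the corresponding $\cC^0$ semigroup on $\cL^2_0$ (and, after dualizing through $A_{1/q_\psi}$, the semigroup $e^{tL^*_\psi}$ on $H_{1,q_\psi}$ used later in this section).
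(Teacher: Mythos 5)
Your argument is correct, and at the decisive quantitative point it rests on the same source as the paper: the paper's own proof of this proposition is essentially a citation, namely ``the result about $L_\psi$ is given in \cite{cf:BGP10}'', followed by the conjugation $L^*_\psi=A^{-1}_{1/q_\psi}L_\psi A_{1/q_\psi}$ and the remark that sectoriality plus \cite{cf:henry} yields the analytic semigroups. What you do differently is rebuild the qualitative spectral picture from scratch: the zero-current rewriting $J*q_\psi=\tfrac12(\log q_\psi)'$ putting the diffusion part in divergence form $L_0u=\tfrac12\partial_\theta(q_\psi\partial_\theta(u/q_\psi))$, the identification of the interaction term as a positive rank-two symmetric form $K\iint\cos(\theta-\eta)u(\eta)v(\theta)$ in $H_{-1,1/q_\psi}$, coercivity plus the compact embedding $\cL^2_0\hookrightarrow H_{-1}$ for the compact resolvent, Weyl/finite-rank stability, and a min-max count showing at most two eigenvalues above $-c$, with $L_\psi\partial_\theta q_\psi=0$ from rotation invariance. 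This buys a self-contained proof of essential self-adjointness, discreteness and the eigenvalue count, narrowing the appeal to \cite{cf:BGP10} (or \cite{cf:GPP12}) to exactly the stability computation that the larger of the two remaining eigenvalues is $0$ (simple, on $T_\psi$) and the other is strictly negative, whereas the paper imports the whole spectral statement wholesale; the price is length and a couple of rough edges: the intermediate assertion that boundedness above of $\sigma(L_\psi)$ forces $\max\sigma(L_\psi)=0$ does not follow by itself (positive eigenvalues are only excluded by the quoted $2\times2$ computation, whose ``entries'' moreover involve the resolvent of $L_0$ rather than plain integrals against $q_\psi$), and the sectoriality claim should note that the convolution term is a nonlocal but bounded perturbation rather than a smooth lower-order coefficient — both points are harmless since the final citation and standard bounded-perturbation results cover them. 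The duality step for $L^*_\psi$ and the adjoint-semigroup identification coincide with the paper's.
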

\begin{proof}
	The result about $L_\psi$ is given in \cite{cf:BGP10}.  Observe that, due to the isometry \eqref{d:isometry} between $H_{-1,1/q_\psi}$ and $H_{1,q_\psi}$, $L^*_\psi = A^{-1}_{1/q_\psi} L_\psi A_{1/q_\psi}$ and it has thus the same spectral properties of $L_\psi$.
	
	From the spectral properties of $L_\psi$ and $L^*_\psi$, one deduces that the two operators are sectorial (and with dense domain in $H_{-1}$), standard techniques assure the existence of the analytic semigroup (e.g. \cite{cf:henry}).
\end{proof}

\medskip

An accurate analysis of the semigroup has already been established in \cite{cf:BGP14} by means of interpolating norms and Fourier decomposition. We recall here the most important properties. We will use the space $H_{-2}$, defined in an analogous way of $H_{-1}$.

\medskip

\begin{proposition}[{\cite[Lemma 7.2]{cf:BGP14}}]
	\label{p:linSem}
	For all $t>0$, the operator $e^{t L_\psi}$ extends to a bounded operator from $H_{-2}$ to $H_{-1}$ and there exists $C>0$ such that for all $u \in H_{-2}$
	\begin{equation}
	\norm{e^{tL_\psi} u}_{-1} \leq C \left( 1+ \frac 1{\sqrt{t}} \right) \norm{u}_{-2}.
	\end{equation}
	Moreover, for all $\epsilon \in (0,1/2)$, $\delta\geq 0$ and all $u \in H_{-1}$
	\begin{equation}
	\norm{e^{(t+\delta)L_\psi} u - e^{tL_\psi}u}_{-1} \leq C \delta^\epsilon \left( 1+ \frac 1{t^{1/2+\epsilon}} \right) \norm{u}_{-2}.
	\end{equation}
	
	By duality, observe that for all $h \in H_1$
	\begin{equation}
		\norm{e^{tL^*_\psi} h}_{2} \leq C \left( 1+ \frac 1{\sqrt{t}} \right) \norm{h}_{1}.
	\end{equation}
\end{proposition}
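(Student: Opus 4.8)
The idea is to treat $L_\psi$ as a lower–order perturbation of the heat generator $\tfrac12\partial^2_\theta$ and to reduce every bound to elementary Fourier–series estimates for the heat semigroup, which are then transported to $e^{tL_\psi}$ by Duhamel's formula. Write $L_\psi=\tfrac12\partial^2_\theta+B_\psi$ with $B_\psi u:=-\partial_\theta\bigl[u\,(J*q_\psi)+q_\psi\,(J*u)\bigr]$. Since $J=-K\sin$ has Fourier support in $\{\pm1\}$ and $q_\psi$ is smooth, multiplication by $J*q_\psi$ is bounded on every $H_s$, convolution by $J$ is regularising, and $\partial_\theta\colon H_{-1}\to H_{-2}$ is bounded; hence $B_\psi\colon H_{-1}\to H_{-2}$ is bounded, $\norm{B_\psi v}_{-2}\le C\norm{v}_{-1}$, and likewise $B_\psi\colon H_k\to H_{k-1}$ for every integer $k$. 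On the heat side, $e^{t\partial^2_\theta/2}$ multiplies the $l$-th Fourier mode by $e^{-tl^2/2}$, so using $\norm{u}_{-1}^2\asymp\sum_{l}l^{-2}|\widehat u_l|^2$ and $\norm{u}_{-2}^2\asymp\sum_l l^{-4}|\widehat u_l|^2$ one gets at once
\begin{equation*}
\norm{e^{t\partial^2_\theta/2}u}_{-1}\le\Bigl(\sup_{l\ge1}l^{2}e^{-tl^{2}/2}\Bigr)^{1/2}\norm{u}_{-2}\le\frac{C}{\sqrt t}\,\norm{u}_{-2},
\end{equation*}
and, from $|1-e^{-\delta l^{2}/2}|\le(\delta l^{2}/2)^{\epsilon}$,
\begin{equation*}
\norm{\bigl(e^{(t+\delta)\partial^2_\theta/2}-e^{t\partial^2_\theta/2}\bigr)u}_{-1}\le C\,\delta^{\epsilon}\bigl(1+t^{-1/2-\epsilon}\bigr)\norm{u}_{-2},\qquad\epsilon\in(0,\tfrac12).
\end{equation*}

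For the first assertion I would take $u\in H_{-1}$ (where $t\mapsto e^{tL_\psi}u$ is continuous and $e^{tL_\psi}u\to u$ as $t\downarrow0$) and set $\Phi(T):=\sup_{0<t\le T}\sqrt t\,\norm{e^{tL_\psi}u}_{-1}<\infty$. Duhamel's formula $e^{tL_\psi}u=e^{t\partial^2_\theta/2}u+\int_0^t e^{(t-s)\partial^2_\theta/2}B_\psi e^{sL_\psi}u\,\dd s$, together with the two displays above and $\norm{B_\psi v}_{-2}\le C\norm{v}_{-1}$, gives for $t\le1$
\begin{equation*}
\sqrt t\,\norm{e^{tL_\psi}u}_{-1}\le C\norm{u}_{-2}+C\sqrt t\int_0^t\frac{\norm{e^{sL_\psi}u}_{-1}}{\sqrt{t-s}}\,\dd s\le C\norm{u}_{-2}+C\,\pi\sqrt t\,\Phi(t),
\end{equation*}
where I used $\norm{e^{sL_\psi}u}_{-1}\le\Phi(t)s^{-1/2}$ and $\int_0^t(t-s)^{-1/2}s^{-1/2}\dd s=\pi$. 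Choosing $T_0$ with $C\pi\sqrt{T_0}\le\tfrac12$ yields $\Phi(T_0)\le2C\norm{u}_{-2}$. For $T_0<t\le1$ I would write $e^{tL_\psi}u=e^{(t-T_0)L_\psi}e^{T_0L_\psi}u$ and use local boundedness of the $\cC^{0}$–semigroup on $H_{-1}$; for $t>1$ I would instead invoke Proposition~\ref{p:linOp}, by which $L_\psi$ is self–adjoint in $H_{-1,1/q_\psi}$ with spectrum in $(-\infty,0]$, so $\norm{e^{tL_\psi}}_{-1\to-1}\le C$ uniformly in $t\ge0$. This proves $\norm{e^{tL_\psi}u}_{-1}\le C(1+t^{-1/2})\norm{u}_{-2}$ for $u\in H_{-1}$, and, the right–hand side being an $H_{-2}$–norm with $H_{-1}$ dense in $H_{-2}$, $e^{tL_\psi}$ extends to the claimed bounded operator $H_{-2}\to H_{-1}$. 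The same scheme, gaining one derivative at a time, also gives $\norm{e^{sL_\psi}}_{H_{k-1}\to H_{k}}\le C(1+s^{-1/2})$ for the relevant integer range, hence $\norm{e^{tL_\psi}u}_{2}\le C\norm{u}_{-2}$ for $t\ge1$ (compose three sub–steps with the uniform $H_{-1}$–bound).

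For the Hölder–in–time estimate, assuming as we may $0<\delta\le1$ (the case $\delta>1$ follows from the uniform $H_{-1}$–bound just obtained), I would split into $t\le1$ and $t>1$. For $t\le1$ subtract the Duhamel representations at $t+\delta$ and $t$:
\begin{equation*}
\begin{split}
e^{(t+\delta)L_\psi}u-e^{tL_\psi}u ={}& \bigl(e^{(t+\delta)\partial^2_\theta/2}-e^{t\partial^2_\theta/2}\bigr)u + \int_t^{t+\delta}e^{(t+\delta-s)\partial^2_\theta/2}B_\psi e^{sL_\psi}u\,\dd s \\
& + \int_0^{t}\bigl(e^{(t+\delta-s)\partial^2_\theta/2}-e^{(t-s)\partial^2_\theta/2}\bigr)B_\psi e^{sL_\psi}u\,\dd s ;
\end{split}
\end{equation*}
the first term is the heat Hölder bound, the second is $\lesssim\sqrt\delta\,(1+t^{-1/2})\norm{u}_{-2}$, and the third, using the heat Hölder bound with $\tau=t-s$ and $\norm{B_\psi e^{sL_\psi}u}_{-2}\lesssim(1+s^{-1/2})\norm{u}_{-2}$ from the first part, is $\lesssim\delta^{\epsilon}\norm{u}_{-2}\int_0^t(t-s)^{-1/2-\epsilon}s^{-1/2}\dd s\lesssim\delta^{\epsilon}t^{-\epsilon}\norm{u}_{-2}$ (the lower–order integrals being similar and $\epsilon<\tfrac12$ making them converge). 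For $t>1$ I write $e^{(t+\delta)L_\psi}u-e^{tL_\psi}u=(e^{\delta L_\psi}-I)e^{tL_\psi}u$ and use $\norm{(e^{\delta L_\psi}-I)v}_{-1}\le\delta\sup_{r\le\delta}\norm{L_\psi e^{rL_\psi}v}_{-1}\lesssim\delta\norm{v}_{2}$ together with $\norm{e^{tL_\psi}u}_{2}\le C\norm{u}_{-2}$. Collecting the pieces and passing from $H_{-1}$ to $H_{-2}$ by density gives the stated bound. Finally, the last display is pure duality: $e^{tL^*_\psi}=(e^{tL_\psi})^{*}$ by Proposition~\ref{p:linOp}, and the adjoint (for the $\cL^2_0$ pairing, under which $H_1=(H_{-1})^{*}$ and $H_2=(H_{-2})^{*}$) of a bounded operator $H_{-2}\to H_{-1}$ is a bounded operator $H_1\to H_2$ of the same norm.

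\textbf{Main obstacle.} The delicate point is closing the bootstrap at the $H_{-2}\to H_{-1}$ level: $B_\psi$ is a \emph{first}–order operator, so the Duhamel kernel behaves like $(t-s)^{-1/2}s^{-1/2}$, whose integral is a \emph{finite but not small} constant, and the iteration does not close by a smallness argument on all of $[0,\infty)$. One must run it on a short interval $[0,T_0]$ — where the prefactor $\sqrt t$ supplies the smallness — and then globalise via the semigroup property, using local boundedness of $e^{tL_\psi}$ on $H_{-1}$ for moderate times and its uniform boundedness (from the self–adjointness and spectral location of Proposition~\ref{p:linOp}) for large times. Everything else — the Fourier multiplier bounds, the Beta–function integrals, and the $\delta\le1$ versus $\delta>1$ bookkeeping — is routine.
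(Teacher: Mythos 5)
The paper does not prove this proposition at all: it is quoted verbatim from \cite[Lemma 7.2]{cf:BGP14}, where it is established by interpolation norms and the Fourier/spectral decomposition of $L_\psi$ itself (eigenvalue asymptotics $\lambda_l=\Theta(l^2)$ and expansion of $e^{tL_\psi}$ in the eigenbasis of the self-adjoint realization in $H_{-1,1/q_\psi}$). Your argument is therefore a genuinely different, self-contained route: you split $L_\psi=\tfrac12\partial_\theta^2+B_\psi$, prove the multiplier bounds for the explicit heat semigroup, and transfer them by Duhamel plus a short-time smallness bootstrap, globalizing with the semigroup property and the uniform $H_{-1}$-bound from Proposition~\ref{p:linOp}; this is more elementary and avoids any spectral information beyond the sign of the spectrum, at the price of redoing the smoothing estimate level by level, whereas the spectral proof gets all regularity scales at once from $\lambda_l\asymp l^2$. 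Your sketch is correct in its main steps (the Fourier bounds, the Beta-function integrals, the $H_{-2}\to H_{-1}$ extension by density, the three-term splitting for the H\"older bound, and the duality step identifying $(e^{tL_\psi})^*$ with $e^{tL_\psi^*}$ under the $\cL^2_0$ pairing), but two points you treat as routine deserve an explicit word if this were written out: (i) the variation-of-constants identity for $e^{tL_\psi}$ with a perturbation that is only bounded $H_{-1}\to H_{-2}$ must be justified (it follows from the standard theory of sectorial operators with fractional-order perturbations, e.g.\ \cite{cf:henry}, or by verifying the identity on a dense set of regular data); and (ii) the ``gain one derivative at a time'' iteration needs an a priori local bound for $e^{tL_\psi}$ on each higher space $H_k$ so that the quantity $\sup_{0<t\le T}\sqrt{t}\,\norm{e^{tL_\psi}u}_{k}$ is finite before the absorption argument can close — this is where parabolic regularity or a Galerkin/regular-data approximation has to be invoked, since the paper only records the semigroup on $\cL^2_0$ and $H_{-1}$. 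Neither point is a fatal gap, but they are the places where your proof leans on standard machinery that the spectral proof of \cite{cf:BGP14} does not need.
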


\medskip

We end this subsection with an useful result on the eigenvalues and eigenfunctions associated to $L_\psi$, recall \eqref{d:eig}.

\begin{proposition}
	\label{p:eigFun}
	There exists $C>1$ such that for all $l\in \N$
	\begin{equation}
		\frac{l^2}{C} \leq \lambda_l \leq C l^2.
	\end{equation}
	
	Let $f^\psi_l = A^{-1}_{1/q_\psi} e^\psi_l$, then $f^\psi_l$ is an eigenfunction of $L^*_\psi$ associated to $-\lambda_l$ and
	\begin{equation}
		\sup_{l\in \N} \norm{\partial_\theta f^\psi_l}_\infty < \infty.
	\end{equation}
\end{proposition}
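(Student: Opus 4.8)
Here is how I would approach the proof of Proposition~\ref{p:eigFun}.

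The plan is to derive both claims from an explicit formula for the Dirichlet form of $-L_\psi$ in $H_{-1,1/q_\psi}$, so I would start by computing it. The stationarity relation satisfied by $q_\psi$ reads $J*q_\psi=\tfrac12\,\partial_\theta\log q_\psi$, which allows rewriting $L^*_\psi u=\tfrac1{2q_\psi}\partial_\theta(q_\psi\partial_\theta u)-J*(q_\psi\partial_\theta u)-\mathrm{const}$ and, dually, $-L_\psi u=-\tfrac12\partial_\theta\bigl[q_\psi\partial_\theta(u/q_\psi)\bigr]+\partial_\theta\bigl[q_\psi(J*u)\bigr]$. Taking $u=A_{1/q_\psi}\phi$ and using the isometry $\langle A_{1/q_\psi}f,A_{1/q_\psi}g\rangle_{-1,1/q_\psi}=\langle f,g\rangle_{1,q_\psi}$, two integrations by parts, and the fact that $J$ and $\partial_\theta J$ are trigonometric polynomials of degree one, I expect to obtain
\[
\langle -L_\psi u,u\rangle_{H_{-1,1/q_\psi}}\;=\;\tfrac12\int_{\bbT}\frac{u^2}{q_\psi}\dd\theta\;-\;K\bigl(a_u^2+b_u^2\bigr),
\]
where $a_u,b_u$ are the first cosine/sine Fourier coefficients of a primitive of $u$; in particular $0\leq K(a_u^2+b_u^2)\leq C\norm{u}^2_{-1,1/q_\psi}$, since these coefficients see only the lowest Fourier mode of $u$.

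For the eigenvalue estimate I would feed this into the Courant--Fischer min--max characterisation of $\lambda_l$ in $H_{-1,1/q_\psi}$ (the operator being self-adjoint with compact resolvent by Proposition~\ref{p:linOp}). The correction term is nonnegative and controlled by $C\norm{\cdot}^2_{-1,1/q_\psi}$, so $\lambda_l$ differs by at most an additive constant from the $(l+1)$-th min--max value of the form $u\mapsto\tfrac12\int u^2/q_\psi$ relative to $\norm{\cdot}^2_{-1,1/q_\psi}$. As $q_\psi$ is smooth and bounded away from $0$ and $\infty$, that form and that norm are uniformly comparable to $\norm{\cdot}^2_{\cL^2}$ and $\norm{\cdot}^2_{-1}$, whose min--max values are, up to multiplicative constants, the eigenvalues $\Theta(l^2)$ of $-\partial_\theta^2$ on $\cL^2_0$; together with $\lambda_l\geq\lambda_1>0$ for the finitely many small $l$, this gives $l^2/C\leq\lambda_l\leq Cl^2$.

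For the eigenfunction statement, $f^\psi_l=A^{-1}_{1/q_\psi}e^\psi_l$ is an eigenfunction of $L^*_\psi=A^{-1}_{1/q_\psi}L_\psi A_{1/q_\psi}$ for $-\lambda_l$ with $\norm{f^\psi_l}_{H_{1,q_\psi}}=\norm{e^\psi_l}_{H_{-1,1/q_\psi}}=1$, which settles the first assertion. For the sup bound I would integrate $-\partial_\theta(q_\psi\partial_\theta f^\psi_l)=e^\psi_l$ once to get $q_\psi\,\partial_\theta f^\psi_l=c_l-E_l$, with $E_l$ the zero-mean primitive of $e^\psi_l$ and $c_l$ a constant pinned by periodicity; since $\norm{E_l}_{\cL^2}=\norm{e^\psi_l}_{H_{-1}}$ is bounded (equivalent norms) so is $c_l$, and it is enough to bound $\norm{E_l}_\infty$ uniformly. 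Setting $v_l:=e^\psi_l/q_\psi$, the divergence-form eigen-equation integrated once reads $-\tfrac12 q_\psi v_l'+q_\psi P_l=\lambda_l E_l+\gamma_l$, where $P_l=J*e^\psi_l$ is a degree-one trigonometric polynomial with $\norm{P_l}_\infty\leq C$ and $\gamma_l$ a constant with $|\gamma_l|\leq C$ (again only the lowest mode of $e^\psi_l$ intervenes). The heart of the matter is an energy identity: the nonnegative quantity $H_l:=\tfrac{\lambda_l}{2}E_l^2+\tfrac14 q_\psi^2 v_l^2$ should satisfy, after the $\lambda_l$-terms cancel, $H_l'=\tfrac12 q_\psi q_\psi' v_l^2+q_\psi^2 v_l P_l-\gamma_l q_\psi v_l$, whence $|H_l'|\leq C(H_l+P_l^2+1)$.

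A Gronwall estimate around the circle, uniform because $\int_{\bbT}P_l^2\leq C$, then gives $\max_\theta H_l\leq C(\min_\theta H_l+1)$. On the other hand $\int_{\bbT}H_l=\tfrac{\lambda_l}{2}\norm{E_l}^2_{\cL^2}+\tfrac14\norm{e^\psi_l}^2_{\cL^2}$, and the Dirichlet-form identity yields $\norm{e^\psi_l}^2_{\cL^2}\leq\norm{q_\psi}_\infty\int (e^\psi_l)^2/q_\psi\leq C\lambda_l$, so $\int_{\bbT}H_l\leq C\lambda_l$ and hence $\max_\theta H_l\leq C\lambda_l$. Then $E_l^2\leq 2H_l/\lambda_l\leq C$ uniformly in $l$, and $\norm{\partial_\theta f^\psi_l}_\infty\leq(|c_l|+\norm{E_l}_\infty)/\min q_\psi\leq C$; the case $l=0$ is a single fixed smooth function. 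The main obstacle is precisely this energy argument: one has to hit on the right combination $H_l$ so that the large factor $\lambda_l$ disappears from $H_l'$, and one must keep careful track of which auxiliary quantities ($c_l$, $\gamma_l$, $P_l$, $a_u$, $b_u$) are genuinely $O(1)$ --- they are, because they depend only on the lowest Fourier mode of $e^\psi_l$, which the $H_{-1}$-normalisation controls, whereas $\norm{e^\psi_l}_{\cL^2}$ itself grows like $l$.
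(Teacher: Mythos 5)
Your proposal is correct, but it takes a genuinely different route from the paper: the paper does not actually prove Proposition \ref{p:eigFun}, it disposes of it in one line by citing \cite[Remark 8.3]{cf:BGP14} for the Weyl-type bound on $\lambda_l$ and \cite[Corollary 8.6]{cf:BGP14} for the uniform bound on $\partial_\theta f^\psi_l$, whereas you reprove both facts from scratch. Your key identities check out: using $J*q_\psi=\tfrac12\partial_\theta\log q_\psi$ one indeed finds $\langle -L_\psi u,u\rangle_{H_{-1,1/q_\psi}}=\tfrac12\int_{\bbT}u^2/q_\psi-K\bigl(a_u^2+b_u^2\bigr)$ with $a_u,b_u$ the first Fourier coefficients of the primitive, a nonnegative correction bounded by $C\norm{u}_{-1,1/q_\psi}^2$, so Courant--Fischer together with $0<\inf q_\psi\leq\sup q_\psi<\infty$ gives $l^2/C\leq\lambda_l\leq Cl^2$; and in the energy function $H_l=\tfrac{\lambda_l}2E_l^2+\tfrac14 q_\psi^2v_l^2$ (note $q_\psi v_l=e^\psi_l$) the $\lambda_l$-terms do cancel, leaving $H_l'=\tfrac12 q_\psi q_\psi'v_l^2+q_\psi^2v_lP_l-\gamma_l q_\psi v_l$, so the Gronwall bound $\max_\theta H_l\leq C(\min_\theta H_l+1)$, the mean bound $\int_{\bbT}H_l\leq C\lambda_l$ (which uses $\norm{e^\psi_l}_{\cL^2}^2\leq C\lambda_l$, itself a consequence of the same form identity), and hence $\norm{E_l}_\infty\leq C$, $|c_l|\leq C$ and $\norm{\partial_\theta f^\psi_l}_\infty\leq C$ all go through, with $l=0$ handled separately as a single fixed smooth function. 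As for what each route buys: the citation keeps the appendix short and inherits from \cite{cf:BGP14} the finer spectral information (precise asymptotics and the interpolation estimates for the semigroup) that the paper uses elsewhere anyway, while your argument makes the proposition self-contained, relies only on the explicit Dirichlet form, min--max, and an elementary ODE/Gronwall trick around the circle, and makes transparent the point you emphasize at the end, namely that all the $O(1)$ constants ($a_u,b_u$, $c_l$, $\gamma_l$, $P_l$) come from the lowest Fourier mode, which is exactly what the $H_{-1,1/q_\psi}$-normalisation controls even though $\norm{e^\psi_l}_{\cL^2}$ grows like $l$.
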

\begin{proof}
	The first part is covered in \cite[Remark 8.3]{cf:BGP14} and the second one in \cite[Corollary 8.6]{cf:BGP14}.
\end{proof}

\medskip

\subsection{Analytical estimate}
A variation on Gronwall Lemma.

\medskip

\begin{lemma}
	\label{lem:apriori}
	Let $T>0$, $\gamma\geq0$. Let $f: [0,T] \rightarrow [0,\infty)$ be a continuous function and $g:[0,T] \rightarrow [0,\infty)$ be such that for all $0\leq t \leq T$
	\begin{equation}
	f(t) \leq f(0) + \int_0^t \frac{e^{-\gamma(t-s)}}{\sqrt{t-s}} f^2(s) \dd s + g(t).
	\end{equation}
	There exists $A>0$, which depends on $T$ only if $\gamma=0$, such that for all $0 < \delta < A$ and if $f(0)< \delta $, $\sup_{t\in[0,T]} g (t)< \delta$, then
	\begin{equation}
	\sup_{t \in [0,T]} f(t) \leq 3\delta.
	\end{equation}
\end{lemma}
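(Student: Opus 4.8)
The plan is to argue by a continuity (bootstrap) argument on the set where $f$ is small. First I would fix the ansatz: suppose $f(0) < \delta$ and $\sup_{[0,T]} g \leq \delta$, and set
\[
\tau := \sup\{ t \in [0,T] : f(s) \leq 3\delta \text{ for all } s \in [0,t] \}.
\]
By continuity of $f$ and $f(0) < \delta < 3\delta$, we have $\tau > 0$, and on $[0,\tau]$ the integral inequality gives, using $f^2(s) \leq 3\delta f(s) \leq 9\delta^2$,
\[
f(t) \;\leq\; \delta + 9\delta^2 \int_0^t \frac{e^{-\gamma(t-s)}}{\sqrt{t-s}} \dd s + \delta \;\leq\; 2\delta + 9\delta^2 \, I_\gamma(T),
\]
where $I_\gamma(T) := \sup_{0\leq t\leq T}\int_0^t \frac{e^{-\gamma(t-s)}}{\sqrt{t-s}}\dd s$. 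The key observation is that $I_0(T) = 2\sqrt{T} < \infty$ and, for $\gamma > 0$, $I_\gamma(T) \leq \int_0^\infty \frac{e^{-\gamma s}}{\sqrt s}\dd s = \sqrt{\pi/\gamma} < \infty$; in particular $I_\gamma(T)$ is bounded by a constant depending only on $T$ (and only on $T$ when $\gamma = 0$, while for $\gamma > 0$ one can bound it independently of $T$).

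Now choose $A > 0$ small enough that $9 A \, I_\gamma(T) \leq 1$, i.e. $A := \tfrac{1}{9 I_\gamma(T)}$. Then for any $0 < \delta < A$ we get, for all $t \in [0,\tau]$,
\[
f(t) \;\leq\; 2\delta + 9\delta^2 I_\gamma(T) \;<\; 2\delta + \delta \;=\; 3\delta,
\]
so in fact $f(t) < 3\delta$ strictly on $[0,\tau]$. If $\tau < T$, continuity of $f$ would force $f(\tau) = 3\delta$, contradicting the strict bound; hence $\tau = T$, and $\sup_{t\in[0,T]} f(t) \leq 3\delta$, as claimed.

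The only genuinely delicate point is the uniform control of the convolution kernel integral $I_\gamma(T)$ and the correct bookkeeping of the dependence of $A$ on the parameters: one must verify that $\int_0^t (t-s)^{-1/2} e^{-\gamma(t-s)}\dd s$ is finite and bounded uniformly in $t \in [0,T]$ (the integrand is integrable near $s=t$ since $-1/2 > -1$), and that this bound depends on $T$ alone when $\gamma = 0$ but can be taken independent of $T$ when $\gamma > 0$. Everything else is the standard bootstrap closure: the quadratic term is absorbed precisely because its coefficient $9\delta^2 I_\gamma(T)$ is beaten by the linear slack $\delta$ once $\delta < A$. I do not foresee any serious obstacle beyond this elementary estimate.
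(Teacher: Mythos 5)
Your proposal is correct and follows essentially the same route as the paper: a continuity/bootstrap argument on the set where $f\leq 3\delta$, absorbing the quadratic term via $f^2\leq 9\delta^2$ and choosing $A$ as (the reciprocal of) $9\sup_t\int_0^t e^{-\gamma(t-s)}(t-s)^{-1/2}\dd s$, which matches the paper's $A=\bigl(9\int_0^\infty e^{-\gamma s}s^{-1/2}\dd s\bigr)^{-1}$ for $\gamma>0$ and $\tfrac{1}{18\sqrt T}$ for $\gamma=0$.
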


\begin{proof}
Consider the set $O = \{ t \; : \; f(t) \leq 3\delta \} \subset [0,T]$. Since $f$ is continuous and $f(0) \leq \delta$, $O$ is a non-empty open set in $[0,T]$. Suppose that $\sup (O) = u < T$; we show that $u \in O$, which implies $O = [0,T]$. 

Consider
\begin{equation}
\begin{split}
f(u) = & f(0) + \int_0^u \frac{e^{-\gamma(u-s)}}{\sqrt{u-s}} f^2(s) \dd s + g(u) \leq \\
\leq & 2 \delta + \delta \left( 9\delta \int_0^u \frac{e^{-\gamma(u-s)}}{\sqrt{u-s}} \dd s \right) \leq 3\delta,
\end{split}
\end{equation}
where the last inequality holds for all $\delta \leq A := \left( 9 \int_0^\infty \frac{e^{-\gamma s}}{\sqrt{s}} \dd s \right)^{-1}$ whenever $\gamma>0$ or for all $\delta \leq \frac 1{18\sqrt{T}}$ in case $\gamma=0$. Thus $u \in O$ and the proof is concluded.
\end{proof}

%

\bigskip

\section*{Acknowledgments} 
The author is thankful to Giambattista Giacomin for proposing this subject and his insightful advises and to Helge Dietert for the help on the analytic part. He would also thank Florian Bechtold, Simon Coste, Christophe Poquet and Assaf Shapira for discussions and comments on the previous drafts of the work.

The author acknowledges the support from the European Union’s Horizon 2020 research and innovation programme under the Marie Sk\l odowska-Curie grant agreement No 665850.

\end{document}